\newtheorem{theorem}{Theorem}
\newtheorem{lemma}[theorem]{Lemma}
\newtheorem{definition}[theorem]{Definition}
\newtheorem{corollary}[theorem]{Corollary}
\newtheorem{problem}[theorem]{Problem}
\newtheorem{example}[theorem]{Example}
\newtheorem{notation}[theorem]{Notation}
\newtheorem{remark}[theorem]{Remark}
\newtheorem{proposition}[theorem]{Proposition}
\newcommand{\Aut}{\mathop{\mathrm{Aut}}}
\newcommand{\Cay}{\mathop{\mathrm{Cay}}}
\newcommand{\Sym}{\mathrm{Sym}}
\newcommand{\SL}{\mathop{\textrm{SL}}}
\renewcommand{\wr}{\mathop{\mathrm{wr}}}
\newcommand{\soc}{\mathop{\mathrm{Soc}}}
\newcommand{\Diag}{\mathop{\mathrm{Diag}}}
\begin{document}
\title[Bounding vertex-stabiliser]{
Bounding the size of a vertex-stabiliser in a finite vertex-transitive graph
} 

\author[C. E. Praeger]{Cheryl E. Praeger}
\address{Cheryl E. Praeger,  School of Mathematics and Statistics,\newline
The University of Western Australia,
 Crawley, WA 6009, Australia} \email{praeger@maths.uwa.edu.au}
\author[P. Spiga]{Pablo Spiga}
\address{Pablo Spiga,  School of Mathematics and Statistics,\newline
The University of Western Australia,
 Crawley, WA 6009, Australia} \email{spiga@maths.uwa.edu.au}
\author[G. Verret]{Gabriel Verret}
\address{Gabriel Verret, Institute of Mathematics, Physics, and
  Mechanics, \newline 
Jadranska 19, 1000 Ljubljana, Slovenia}
\email{gabriel.verret@fmf.uni-lj.si}

\thanks{Address correspondence to P. Spiga,
E-mail: spiga@maths.uwa.edu.au\\ 
The paper forms part of the Australian Research Council Federation
Fellowship Project 
FF0776186 of the first author. The second author is supported by The University of Western Australia
as part of the  Federation Fellowship project.}

\subjclass[2000]{20B25}
\keywords{Weiss conjecture;  normal quotients; quasiprimitive
  groups; almost simple groups} 

\begin{abstract}
In this paper we discuss a method for bounding the size of the stabiliser of a vertex in a $G$-vertex-transitive graph $\Gamma$. In the main result the group $G$ is  quasiprimitive or biquasiprimitive on the vertices of $\Gamma$, and  we obtain a genuine reduction to the case where $G$ is a nonabelian simple group. 

Using normal quotient techniques developed by the first author, the main theorem applies to general $G$-vertex-transitive graphs which are $G$-locally primitive (respectively, $G$-locally quasiprimitive), that is, the stabiliser $G_\alpha$ of a vertex $\alpha$ acts primitively (respectively quasiprimitively) on the set of vertices adjacent to $\alpha$.   We discuss how our results may be used to investigate conjectures by Richard Weiss (in 1978) and the first author (in 1998) that  the order of $G_\alpha$ is bounded above by some function depending only on the valency of $\Gamma$, when $\Gamma$ is $G$-locally primitive or $G$-locally quasiprimitive, respectively.

\bigskip
\begin{center}
\emph{To Richard Weiss for the inspiration of his beautiful conjecture}\\
\end{center}
\end{abstract}

\maketitle

\section{Introduction}\label{sec:1}

In this paper we study the family $\mathcal{A}(d)$ defined as follows (where a graph $\Gamma$ is $G$-\emph{vertex-transitive} if $G$ is a subgroup of $\Aut(\Gamma)$ acting
transitively on the vertex set $V\Gamma$ of $\Gamma$).

\begin{definition}\label{def:Ad}{\rm Let $d$ be a positive integer. The family $\mathcal{A}(d)$ consists of the ordered pairs $(\Gamma,G)$, with $\Gamma$ a connected $G$-vertex-transitive graph of valency at most $d$. 
}
\end{definition}

We study the order of the stabilisers $G_\alpha$ for pairs $(\Gamma,G)\in\mathcal{A}(d)$ and $\alpha\in V\Gamma$, focusing on the case where $G$ is quasiprimitive or biquasiprimitive in its action on $V\Gamma$. A permutation group $G$ is said to be \emph{quasiprimitive} if every non-identity normal subgroup of $G$ is transitive, and \emph{biquasiprimitive} if $G$ is not quasiprimitive and every non-trivial normal subgroup of $G$ has at most two orbits.

We briefly explain the context: the family $\mathcal{A}(d)$ is closed under forming normal quotients in the sense that, for $(\Gamma,G)\in\mathcal{A}(d)$, and a normal subgroup $N\leq G$ with at least three orbits in $V\Gamma$, the pair $(\Gamma_N,G_N)\in\mathcal{A}(d)$, where $\Gamma_N$ is the $G$-normal quotient of $\Gamma$ modulo $N$ (see Definition~\ref{def:nq}) and $G_N$ is the group induced by $G$ on the set of $N$-orbits. We regard pairs $(\Gamma,G)$ which admit no proper normal quotient reduction to smaller graphs as `irreducible'. Thus the irreducible pairs in $\mathcal{A}(d)$, are those for which  $G$ is quasiprimitive or biquasiprimitive on $V\Gamma$.  

 Before stating our main results we give two definitions.

\begin{definition}\label{soclefactor}{\rm
If $G$ is a quasiprimitive or a biquasiprimitive permutation group, then the socle $\soc(G)$ of $G$ is isomorphic to a direct product $T^l$ of isomorphic simple groups (where $T$ is possibly abelian). We call $T$ the \emph{socle factor} of $G$.}
\end{definition}

\begin{definition}\label{functions}{\rm
Let $f:\mathbb{N}\to \mathbb{N}$. Then $(\Gamma,G)\in \mathcal{A}(d)$ is said to be \emph{$f$-bounded} if $|G_\alpha|\leq f(d)$ for every $\alpha\in V\Gamma$.  
Define $\widehat{f}, \tilde{f}:\mathbb{N}\to\mathbb{N}$  by
\[
\begin{array}{lll}
\widehat{f}(d)&=&(df(d)^{d^df(d)^{2d}})!  \\
\tilde{f}(d)&=&f(d')\\
 \end{array}
\]
where  $d'$ is the unique element of $\mathbb{N}$ such that $(d'-1)(d'-2)<d\leq d'(d'-1)$.
For functions $g_1,g_2:\mathbb{N}\to \mathbb{N}$ and setting $d_0=d(d-1)$,  define $\overline{g_1\ast g_2}:\mathbb{N}\to \mathbb{N}$  by 
\[
\overline{g_1\ast g_2}(d)=(d_0(g_1(d_0)g_2(d_0))^{d_0^{d_0}\min\{g_1(d_0),g_2(d_0)\}^{2d_0}})!
\]
} 
\end{definition}

Theorems~\ref{thm:mainqp} and~\ref{thm:mainbiqp} are our main results for quasiprimitive groups and biquasiprimitive groups, respectively.

\begin{theorem}\label{thm:mainqp}
Let $(\Gamma,G)\in \mathcal{A}(d)$ where  $G$ is quasiprimitive on $V\Gamma$ with socle factor $T$. Then either
\begin{description}
\item[$(1)$]$(\Gamma,G)$ is $(dd!)!$-bounded, or
\item[$(2)$]the pair $(\Gamma,G)$ uniquely determines a pair $(\Lambda,T)\in \mathcal{A}(d)$, and if  $(\Lambda,T)$ is $g$-bounded for some $g:\mathbb{N}\to \mathbb{N}$, then $(\Gamma,G)$ is $\widehat{g}$-bounded. Conversely, if $(\Gamma,G)$ is $f$-bounded for some $f:\mathbb{N}\to\mathbb{N}$, then also $(\Lambda,T)$ is $f$-bounded. 
\end{description}
\end{theorem}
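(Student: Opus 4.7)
My plan is to apply the quasiprimitive O'Nan--Scott theorem (as developed by the first author) to classify $G$ according to the structure of $\soc(G)=T^l$ and its permutation action, and then to handle each type separately. Depending on the type, one either shows that $G_\alpha$ is bounded uniformly in terms of $d$ alone (conclusion (1)), or one constructs a canonical pair $(\Lambda,T)\in\mathcal{A}(d)$ realising $T$ as a vertex-transitive subgroup of a related graph (conclusion (2)).

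For types in which the socle factor cannot naturally be promoted to a vertex-transitive group on a related object --- principally the affine type (where $T$ is abelian), the twisted wreath type, and the holomorph-of-simple/compound types --- I expect the uniform bound $|G_\alpha|\leq(dd!)!$ of (1). In each of these the socle acts regularly or close to regularly, so $G_\alpha$ intersects $\soc(G)$ trivially or negligibly and embeds into $\Aut(\soc(G))$; combining this with the faithful action on $\Gamma(\alpha)$ (of size at most $d$), a Schreier-type argument should embed $G_\alpha$ into $\Sym(dd!)$.

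For the remaining types --- almost simple, simple and compound diagonal, and product action --- I would construct $(\Lambda,T)$ as follows. If $l=1$ (almost simple), take $(\Lambda,T)=(\Gamma,T)$; since $T\trianglelefteq G$ and $G$ is quasiprimitive, $T$ is already transitive on $V\Gamma$, and the containment $T_\alpha\leq G_\alpha$ gives the converse direction of (2) immediately. If $l>1$, the $l$ simple direct factors of $\soc(G)$ are permuted by $G$; using the product-action (or diagonal) block system of $V\Gamma$, one obtains a canonical quotient on which a single copy of $T$ acts vertex-transitively, and $\Lambda$ is the graph induced on this quotient by the $\Gamma$-adjacency restricted to one fibre. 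Canonicity then yields uniqueness of $(\Lambda,T)$, and the converse direction of (2) again follows because a $T$-vertex-stabiliser sits inside a conjugate of $G_\alpha$.

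The quantitative heart of the theorem is the forward direction: given $|T_\beta|\leq g(d)$ for $\beta\in V\Lambda$, show $|G_\alpha|\leq\widehat{g}(d)=(dg(d)^{d^d g(d)^{2d}})!$. The strategy is to embed $G_\alpha$ faithfully into $\Sym(dg(d)^{d^d g(d)^{2d}})$; the outer factor $d$ comes from the action of $G_\alpha$ on $\Gamma(\alpha)$, while the inner factor $g(d)^{d^d g(d)^{2d}}$ should arise from an iterated pointwise-stabiliser argument on balls $\Gamma_i(\alpha)$, where at each iteration the correspondence with $(\Lambda,T)$ bounds the new kernel layer by $|T_\beta|\leq g(d)$, and the number $d^d g(d)^{2d}$ of iterations needed to exhaust the graph comes from a Thompson--Wielandt-style local analysis combined with the valency bound. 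The main obstacle will be making the product-action construction of $(\Lambda,T)$ fully canonical while controlling the valency, and executing the iterated-stabiliser estimate so that it matches the precise exponent in $\widehat{g}$; the double-exponential form reflects the interplay between the valency-driven combinatorial spread and the simple-group stabiliser input, and ensuring these compose correctly is the delicate point.
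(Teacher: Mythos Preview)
Your type-by-type strategy is right in outline, but two genuine gaps would prevent it from going through.

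First, the diagonal types SD and CD belong in conclusion~(1), not~(2). In type SD the socle $N=T^k$ has $N_\alpha$ a full diagonal copy of $T$, so there is no small-stabiliser quotient $(\Lambda,T)$ available; your proposed construction of $\Lambda$ for these types does not make sense. What the paper does instead is observe that $N$ contains a \emph{regular} normal subgroup $M\cong T^{k-1}$, and then apply the boundedness-from-normal-subgroups tool (Theorem~\ref{thm:1}) twice---once to $M\trianglelefteq N$ to get $|N_\alpha|\leq d!$, then to $N\trianglelefteq G$ to get $|G_\alpha|\leq(dd!)!$. The CD case is handled the same way. Thus only AS and PA land in conclusion~(2), and it is precisely the diagonal types (not HA/HS/HC/TW, which give only $d!$) that force the bound $(dd!)!$ in~(1).

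Second, your mechanism for the forward direction of~(2) is not the right one. The exponent $d^d g(d)^{2d}$ in $\widehat{g}$ is \emph{not} a count of iterations in a ball-stabiliser filtration; it is a bound on $l$, the number of simple factors of $\soc(G)=T^l$ in the PA case. The paper proves this bound (Lemma~\ref{lemma:auxiliary} and Theorem~\ref{prop:PAtype1qp}) by a pigeonhole argument on the coordinate entries of a generating set for $N$ modulo $N_\sigma$: if $l>d^d|T_\delta|^{2d}$ one finds two coordinates on which all generators agree, contradicting $T^l=UR^l$. Once $l$ is bounded one has $|N_\alpha|\leq|T_\delta|^l\leq g(d)^{d^d g(d)^{2d}}$, and a single application of Theorem~\ref{thm:1} to $N\trianglelefteq G$ gives $|G_\alpha|\leq(d\,|N_\alpha|)!=\widehat{g}(d)$. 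A Thompson--Wielandt argument would require local primitivity, which is explicitly \emph{not} assumed here (the paper emphasises that no hypothesis is placed on the local action); without the pigeonhole bound on $l$ your route has no control over the PA case. The construction of $(\Lambda,T)$ in the PA case is the normal quotient $\Gamma_{M_1}$ with $M_1=T_2\times\cdots\times T_l$, which has vertex-stabiliser $T_\delta$ and lies in $\mathcal{A}(d)$ since it is a normal quotient; uniqueness comes from the transitivity of $G$ on the simple factors.
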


\begin{theorem}\label{thm:mainbiqp}
Let $(\Gamma,G)\in \mathcal{A}(d)$ where  $G$ is biquasiprimitive on $V\Gamma$ with socle factor $T$. Then either
\begin{description}
\item[$(1)$]$(\Gamma,G)$ is $(d^2((d(d-1))!)^2)!$-bounded, or
\item[$(2)$]$(\Gamma,G)$ uniquely determines two (possibly isomorphic) pairs $(\Lambda_i,T)\in$\\ $\mathcal{A}(d(d-1))$, for $i=1,2$, and  if  $(\Lambda_i,T)$ is $g_i$-bounded for $i=1,2$, then $(\Gamma,G)$ is $\overline{g_1\ast g_2}$-bounded. Conversely, if $(\Gamma,G)$ is $f$-bounded for some $f:\mathbb{N}\to\mathbb{N}$, then each of the $(\Lambda_i,T)$ is $\tilde{f}$-bounded. 
\end{description}
\end{theorem}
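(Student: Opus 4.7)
The strategy is to use biquasiprimitivity to pass to a canonical bipartition of $V\Gamma$ and then apply the reduction philosophy of Theorem~\ref{thm:mainqp} on each half separately. Since $G$ is biquasiprimitive, there is a normal index-two subgroup $G^+$ of $G$ whose orbits $\Delta_1,\Delta_2$ form the unique bipartition preserved by every intransitive non-trivial normal subgroup; in particular $\soc(G)=T^l\leq G^+$. For any $\alpha\in\Delta_i$ we have $G_\alpha=G^+_\alpha$, because any element of $G\setminus G^+$ swaps $\Delta_1$ and $\Delta_2$ and cannot fix $\alpha$, so it suffices to control $|G^+_\alpha|$ via the two induced actions of $G^+$ on $\Delta_1$ and $\Delta_2$.

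The next step is the construction of the pairs $(\Lambda_i,T)$. Let $\Lambda_i$ be the graph on $\Delta_i$ in which two vertices are joined whenever they share a common neighbour in $\Gamma$; a straightforward argument using the connectedness of $\Gamma$ shows $\Lambda_i$ is connected, and the valency bound on $\Gamma$ yields that the valency of $\Lambda_i$ is at most $d_0:=d(d-1)$. The socle factor $T$ preserves both $\Delta_1$ and $\Delta_2$ and hence acts on each $\Lambda_i$; passing if necessary to a $T$-orbit in $\Delta_i$ (as in the quasiprimitive case) yields the desired pair $(\Lambda_i,T)\in\mathcal{A}(d_0)$. The dichotomy of the theorem then corresponds to the structure of $T^l$ relative to the bipartition: if $T$ is abelian, or $T^l$ acts unfaithfully on some $\Delta_i$, or the local action at $\alpha$ collapses, then $G^+_\alpha$ embeds into a symmetric group of small degree and we land in case~$(1)$ with bound $(d^2(d_0!)^2)!$; otherwise case~$(2)$ applies.

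In case~$(2)$, the forward direction is proved layer by layer, mirroring Theorem~\ref{thm:mainqp}: the stabiliser $(T^l)_\alpha$ projects onto the two $T$-actions on $V\Lambda_1$ and $V\Lambda_2$, giving contributions bounded by $g_1(d_0)$ and $g_2(d_0)$, while the quotient $G^+/T^l$ lies in $\mathrm{Out}(T)^l\rtimes\Sym(l)$ and is controlled by its action on $\Gamma(\alpha)$ of size at most $d$. Combining these estimates produces the $\overline{g_1\ast g_2}$-bound, with the $\min\{g_1(d_0),g_2(d_0)\}$ in the exponent reflecting the freedom to use whichever side of the bipartition gives a sharper estimate for the socle-permutation contribution. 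The converse is immediate once the construction is in place: the stabiliser $T_\alpha$ in its action on $V\Lambda_i$ is a subgroup of $G_\alpha$ acting on $V\Gamma$, so any $f$-bound on $(\Gamma,G)$ yields an $\tilde{f}(d_0)$-bound on $(\Lambda_i,T)$ by the definition of $\tilde{f}$.

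The principal obstacle is the second step: showing that the pairs $(\Lambda_i,T)$ are well-defined and uniquely determined by $(\Gamma,G)$ up to the intrinsic symmetry exchanging $\Delta_1$ and $\Delta_2$, and that each lies in $\mathcal{A}(d_0)$ with $T$ acting in the right way for the inductive bound to take hold. This is substantially more subtle than in the quasiprimitive setting because the socle $T^l$ need not preserve the bipartition: when it does not, $(\Lambda_i,T)$ must be reconstructed from the action of the index-two subgroup $T^l\cap G^+$, and sorting out all possible interactions between the O'Nan--Scott type of $G$ and the bipartition is where the bulk of the work will lie.
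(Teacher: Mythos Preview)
Your identification of the two pairs $(\Lambda_i,T)$ is the central problem. You take them to arise from the two halves $\Delta_1,\Delta_2$ of the bipartition, with $T$ acting separately on each distance-$2$ graph. But the two halves give \emph{isomorphic} graphs: any element of $G\setminus G^+$ sends $\Delta_1$ to $\Delta_2$ and induces an isomorphism $\Gamma^{\Delta,1}\cong\Gamma^{\Delta,2}$, so the bipartition produces a single pair $(\Gamma^\Delta,H)\in\mathcal{A}(d(d-1))$, where $H$ is the permutation group induced by $G^+$ on one half. Moreover, the abstract socle factor $T$ is not a subgroup of $G$; the sentence ``the socle factor $T$ preserves both $\Delta_1$ and $\Delta_2$ and hence acts on each $\Lambda_i$'' has no meaning as written, and ``passing to a $T$-orbit in $\Delta_i$'' does not define an element of $\mathcal{A}(d_0)$.

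The paper's route is quite different and hinges on the structure theorem for biquasiprimitive groups (Theorem~\ref{thm:cheryl}), which you do not invoke. That result says that (after conjugation) $G^+=\Diag_\varphi(H\times H)$, so the action on each half is the action of $H$ on $\Delta$, and it splits into types $(A)$, $(B)(i)$, $(B)(ii)$. In type $(A)$, $H$ is quasiprimitive on $\Delta$, Theorem~\ref{thm:mainqp} applies to $(\Gamma^\Delta,H)$, and one takes $\Lambda_1=\Lambda_2=\Lambda$. In type $(B)(ii)$ there is a regular normal subgroup and one lands in case~$(1)$. The genuinely new case is $(B)(i)$, where $\soc(H)=R\times S$ with $R,S\cong T^l$ two \emph{distinct} minimal normal subgroups of $H$ interchanged by $\varphi$; the two pairs $(\Lambda_1,T)=(\Lambda_R,T)$ and $(\Lambda_2,T)=(\Lambda_S,T)$ are normal quotients of $\Gamma^\Delta$ by the maximal normal subgroups of $M=R\times S$ omitting one simple factor from $R$ (respectively $S$). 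So the ``two'' in the theorem comes from the two minimal normal subgroups of $H$, not from the two sides of the bipartition. Your concern about $T^l$ not preserving the bipartition is a red herring; the real obstacle is that $H$ need not be quasiprimitive on $\Delta$, and when it is not, a direct appeal to Theorem~\ref{thm:mainqp} is unavailable and one must redo the PA-type analysis (Lemma~\ref{lemma:auxiliary}, Theorem~\ref{prop:PAtype2qp}) for the product action of $H$ on $\Sigma_R\times\Sigma_S$.
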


The class of quasiprimitive permutation groups may be described 
(see~\cite{P1}) in a fashion very similar to the description given by he
O'Nan-Scott Theorem for primitive permutation groups. In~\cite{P2} this description is refined and  eight
types of quasiprimitive groups are defined, namely HA, HS, HC, SD, CD, TW,
PA and AS, such that every quasiprimitive group belongs to
exactly one of these types. In proving Theorem~\ref{thm:mainqp} we show, in Corollaries~\ref{cor:many} and~\ref{cor:SD},  that there exists a function $h:\mathbb{N}\to\mathbb{N}$ such that, if $(\Gamma,G)$ is in $\mathcal{A}(d)$, with $G$ quasiprimitive on vertices of type HA, HS, HC, SD, CD or TW, then $(\Gamma,G)$ is $h$-bounded. Furthermore, if $G$ is of type PA with socle $T^l$, then $l$ is bounded above by a function of $d$ and of the size of the vertex-stabiliser of a $T$-vertex-transitive graph $\Lambda$ uniquely determined by $(\Gamma,G)$. Therefore Theorem~\ref{thm:mainqp}  reduces the problem of bounding (as a function of $d$) the size of the vertex-stabiliser $G_\alpha$ to the case of nonabelian simple groups $G$. Similarly, the class of biquasiprimitive groups is described in detail in~\cite{P1b} and Theorem~\ref{thm:mainbiqp} applies to this more complicated family.

The novelty of these (to us, remarkable) results lies in the fact that Theorems~\ref{thm:mainqp} and~\ref{thm:mainbiqp} do not require any assumption on the \emph{local action}, that is, on the action of $G_\alpha$ on the set $\Gamma(\alpha)$ of vertices adjacent to $\alpha$. In particular, $G_\alpha$ is not assumed to be transitive on $\Gamma(\alpha)$ in Theorems~\ref{thm:mainqp} and~\ref{thm:mainbiqp}. 

In the remainder of this introductory section we discuss how Theorems~\ref{thm:mainqp} and~\ref{thm:mainbiqp} can be used to study general $G$-vertex-transitive graphs. Also we show that Theorems~\ref{thm:mainqp} and~\ref{thm:mainbiqp} are relevant for some open problems in algebraic graph theory.

\subsection{Application of Theorems~\ref{thm:mainqp} and~\ref{thm:mainbiqp} to studying general $G$-vertex-transitive graphs}

Although Theorems~\ref{thm:mainqp} and~\ref{thm:mainbiqp}  are stated for quasiprimitive and biquasiprimitive groups, using normal quotient techniques they can be fruitfully applied in more general situations.

\begin{definition}\label{def:nq}{\rm
Let $(\Gamma,G)$ be in $\mathcal{A}(d)$, and let $N$ be a normal subgroup of $G$ with $N$ intransitive on $V\Gamma$. Let $\alpha^N$ denote the $N$-orbit containing $\alpha\in  V\Gamma$. Then the \emph{normal quotient} $\Gamma_N$ is the graph whose vertices are the $N$-orbits on $V\Gamma$, with an edge between distinct vertices $\alpha^N$ and $\beta^N$ if
and only if there is an edge of $\Gamma$ between $\alpha'$ and $\beta'$, for some $\alpha' \in \alpha^N$ and some $\beta'\in \beta^N$. The normal quotient is non-trivial if $N\ne1$.
}
\end{definition} 

Note that the group $G$ induces a transitive action on the vertices of the normal quotient $\Gamma_N$. Also, for adjacent $\alpha^N, \beta^N$ of $\Gamma_N$, each vertex of $\alpha^N$ is adjacent to at least one vertex of $\beta^N$ (since $N$ is transitive on both sets) and so, if $d$ is the valency of $\Gamma$ and  $d'$ is the valency of $\Gamma_N$, then $d'\leq d$. Thus $(\Gamma,G)\in\mathcal{A}(d)$ implies that $(\Gamma_N,G_N)\in\mathcal{A}(d)$, where $G_N$ is the permutation group induced by $G$ on the set of $N$-orbits.

Following Wielandt~\cite{W1}, for a subgroup $H$ of a permutation group $G$,  the $1$-\emph{closure of $H$ in  $G$} is the largest subgroup of $G$ with the same orbits as $H$. The subgroup $H$ is $1$-\emph{closed in} $G$ if $H$ equals its $1$-closure in $G$. Note that if $N$ is a normal subgroup of $G$, then the $1$-closure of $N$ in $G$ is normal in $G$ and equals the kernel of the action of $G$ on the $N$-orbits. Thus the induced group $G_N$ is the quotient of $G$ modulo the 1-closure of $N$ in $G$. In particular, if $N$ is a $1$-closed normal subgroup of $G$, then $G_N= G/N$.

Let $(\Gamma,G)\in \mathcal{A}(d)$ and let $N$ be a normal subgroup which is $1$-closed in $G$, and is maximal subject to having more than two orbits on $V\Gamma$. By Definition~\ref{def:nq}, the pair $(\Gamma_N,G/N)$ lies in $\mathcal{A}(d)$ and the group $G/N$ is quasiprimitive or biquasiprimitive on $V\Gamma_N$. Hence Theorems~\ref{thm:mainqp} and~\ref{thm:mainbiqp} apply to $(\Gamma_N,G/N)$. Now the stabiliser of the vertex $\alpha^N$ of $\Gamma_N$ is $G_\alpha N/N\cong G_\alpha/N_\alpha$ and therefore Theorems~\ref{thm:mainqp} and~\ref{thm:mainbiqp} provide information about bounds on $|G_\alpha/N_\alpha|$. In general, without further restrictions on $(\Gamma,G)$, it is difficult to obtain useful information about $N_\alpha$. In general, $|N_\alpha|$ is not bounded by a function of $d$ (see Example~\ref{ex:1} which gives a family of examples in $\mathcal{A}(d)$ for which $|N_\alpha|$ grows exponentially with $d$). Nevertheless, there are some remarkable families of $G$-vertex-transitive graphs where fairly weak conditions on the local action lead to an upper bound on $|N_\alpha|$ as a function of $d$. We discuss in detail some of these families in the rest of this subsection. 

For a property $\mathcal{P}$ of a group action, a pair $(\Gamma,G)\in\mathcal{A}(d)$ is said to be  \emph{locally $\mathcal{P}$} if the permutation group $G_\alpha^{\Gamma(\alpha)}$ induced by $G_\alpha$ on $\Gamma(\alpha)$ has the property $\mathcal{P}$. We will consider four properties $\mathcal{P}$, the first three of which are the properties of being $2$-transitive, primitive or quasiprimitive. The fourth property is semiprimitivity: a finite permutation group $L$ is \emph{semiprimitive} if every normal subgroup of $L$ is either transitive or semiregular~\cite{BM,KS,PSV}.


The following proposition was proved in~\cite[Lemmas 1.1, 1.4(p), 1.5 and 1.6]{ImpGr} (see the summary in \cite[Theorem 4.1]{PConj}, or~\cite{Montreal} for a more general treatment). The boundedness assertion in the last sentence follows since, as noted above, the vertex stabilisers for $(\Gamma,G)$ and $(\Gamma_N,G/N)$ are isomorphic. 

\begin{proposition}\label{prop}
Let $\mathcal{P}$ be one of the properties: $2$-transitive, primitive or quasiprimitive. Let $(\Gamma,G)\in \mathcal{A}(d)$ be locally $\mathcal{P}$, and let $N$ be a normal subgroup which is $1$-closed in $G$, and maximal subject to having more than two orbits on $V\Gamma$. Then $(\Gamma_N,G/N)\in\mathcal{A}(d)$ is locally $\mathcal{P}$, $G/N$ is quasiprimitive or biquasiprimitive on $V\Gamma_N$, and  $N_\alpha=1$ for every $\alpha\in V\Gamma$. In particular, for any function $f:\mathbb{N}\to \mathbb{N}$,   $(\Gamma,G)$ is $f$-bounded if and only if $(\Gamma_N,G/N)$ is $f$-bounded.
\end{proposition}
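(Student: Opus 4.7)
The plan is to establish four assertions in sequence: $(\Gamma_N, G/N) \in \mathcal{A}(d)$, $N_\alpha = 1$, $G/N$ is quasiprimitive or biquasiprimitive on $V\Gamma_N$, and $(\Gamma_N, G/N)$ is locally $\mathcal{P}$; the $f$-boundedness equivalence will then be immediate. The containment in $\mathcal{A}(d)$ has already been noted in the excerpt (valency can only drop, connectedness is inherited). The argument's driving force is that for each of the three values of $\mathcal{P}$, every nontrivial normal subgroup of a $\mathcal{P}$-permutation group is transitive.

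First I would prove $N_\alpha = 1$. Since $N \trianglelefteq G$, one has $N_\alpha \trianglelefteq G_\alpha$, so $N_\alpha^{\Gamma(\alpha)} \trianglelefteq G_\alpha^{\Gamma(\alpha)}$. Suppose $N_\alpha^{\Gamma(\alpha)} \neq 1$: by local $\mathcal{P}$ it is transitive on $\Gamma(\alpha)$, so for any $\beta \in \Gamma(\alpha)$ we have $\Gamma(\alpha) \subseteq \beta^N$. Combining this with the transitivity of $N$ on $\alpha^N$, every vertex of $\alpha^N$ has all of its neighbours in at most one other $N$-orbit, so $\Gamma_N$ has valency at most $1$. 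Since $\Gamma_N$ is connected, either it has no edges (forcing $N$ to be transitive on $V\Gamma$) or $\Gamma_N \cong K_2$ (forcing $N$ to have exactly two orbits), both contradicting the hypothesis that $N$ has more than two orbits. Hence $N_\alpha$ fixes $\Gamma(\alpha)$ pointwise. An inductive connectivity argument finishes the job: for each $\beta \in \Gamma(\alpha)$, $N_\alpha \leq N_\beta$, and $N_\beta$ fixes $\Gamma(\beta)$ pointwise by the same reasoning, so $N_\alpha$ fixes every vertex of $\Gamma$ and is therefore trivial.

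Next I would verify the quasiprimitive/biquasiprimitive conclusion and transfer the local property. For the first, let $M/N$ be a nontrivial normal subgroup of $G/N$; then $M \trianglelefteq G$ strictly contains $N$, and its $1$-closure $M^*$ is a $1$-closed normal subgroup of $G$ with the same orbits as $M$ on $V\Gamma$, which correspond bijectively to the $(M/N)$-orbits on $V\Gamma_N$. If $M/N$ had more than two orbits, $M^*$ would contradict the maximality of $N$. For the second, $N_\alpha = 1$ gives $G_\alpha \cap N = 1$, so the canonical map $G_\alpha \to (G/N)_{\alpha^N}$ is an isomorphism, and the $G_\alpha$-equivariant surjection $\pi\colon \Gamma(\alpha) \to \Gamma_N(\alpha^N)$, $\beta \mapsto \beta^N$, furnishes a $G_\alpha$-invariant block system on $\Gamma(\alpha)$. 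In the $2$-transitive and primitive cases the block system is trivial; the single-block case is ruled out exactly as above (it would force $\Gamma_N$ to have valency $1$), so $\pi$ is a bijection and $\mathcal{P}$ transfers verbatim. In the quasiprimitive case, the kernel $K$ of the induced action on the blocks is a normal subgroup of $G_\alpha^{\Gamma(\alpha)}$ which preserves each block and so cannot be transitive on $\Gamma(\alpha)$ unless there is a single block; hence $K = 1$, and since every nontrivial normal subgroup of $G_\alpha^{\Gamma(\alpha)}$ is transitive on $\Gamma(\alpha)$ (and thus on blocks), the induced action is quasiprimitive. The $f$-boundedness equivalence is then immediate from $|G_\alpha| = |(G/N)_{\alpha^N}|$.

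The main obstacle is the first step: it is precisely there that the strength of local $\mathcal{P}$ is used, and the argument breaks down if $\mathcal{P}$ is weakened to mere transitivity. Everything downstream---the orbit structure of $G/N$, the faithfulness needed to transfer $\mathcal{P}$ to $\Gamma_N(\alpha^N)$, and the equality of vertex-stabiliser orders---rests on $N_\alpha = 1$.
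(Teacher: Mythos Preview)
Your argument is correct. The paper does not give a self-contained proof of this proposition; it cites \cite[Lemmas~1.1,~1.4(p),~1.5,~1.6]{ImpGr} (with summaries in \cite{PConj,Montreal}) for the main assertions and only remarks that the $f$-boundedness equivalence follows from the isomorphism $G_\alpha\cong (G/N)_{\alpha^N}$. Your direct proof supplies exactly the content of those references, and the key step---showing $N_\alpha=1$ by noting that a nontrivial $N_\alpha^{\Gamma(\alpha)}$ would, by local $\mathcal{P}$, be transitive and hence collapse $\Gamma_N$ to valency at most $1$---is the standard one.

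One small point you pass over: in defining $\pi\colon\Gamma(\alpha)\to\Gamma_N(\alpha^N)$ you tacitly use $\Gamma(\alpha)\cap\alpha^N=\emptyset$. This holds because that intersection is $G_\alpha$-invariant and $G_\alpha$ is transitive on $\Gamma(\alpha)$ (all three choices of $\mathcal{P}$ imply transitivity), so if nonempty it would equal $\Gamma(\alpha)$, making $\alpha^N$ isolated in $\Gamma_N$ and contradicting connectedness with more than two vertices. It is worth a sentence, but the rest of the argument is complete.
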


Proposition~\ref{prop} shows that, for pairs $(\Gamma,G)$ which are locally $2$-transitive, locally primitive, or locally quasiprimitive, normal quotient reduction together with Theorems~\ref{thm:mainqp} and~\ref{thm:mainbiqp} can be used to obtain useful information about the vertex-stabiliser $G_\alpha$.

\subsection{The Weiss and Praeger conjectures}
In 1973, Gardiner~\cite{Gard} proved that, if $\Gamma$ is a connected $G$-vertex-transitive locally primitive graph and $(u,v)$ is an arc of $\Gamma$, then the pointwise stabiliser in $G$ of $u, v$, and all vertices adjacent to either $u$ or $v$, is a $p$-group, for some prime $p$. A series of papers (see~\cite{weissp,weissu} for example) followed in which various additional constraints on the local action led to upper bounds on the order of a vertex-stabiliser. This eventually led Richard Weiss~\cite{Weiss} to conjecture in 1978 that local primitivity should imply boundedness. In our terminology his conjecture is the following.\\

\noindent\textbf{Weiss Conjecture. }\label{Weiss}
\emph{There exists a function $f:\mathbb{N}\to \mathbb{N}$ such that, if $(\Gamma,G)\in \mathcal{A}(d)$  is locally primitive, then $(\Gamma,G)$ is $f$-bounded.}\\

In~1998, the first author~\cite[Problem 7]{PConj} conjectured that local primitivity can be weakened to local quasiprimitivity.\\

\noindent\textbf{Praeger Conjecture. }\label{Cheryl}
\emph{There exists a function $f:\mathbb{N}\to \mathbb{N}$ such that, if $(\Gamma,G)\in \mathcal{A}(d)$  is locally quasiprimitive, then $(\Gamma,G)$ is $f$-bounded.}\\

Moreover, in~\cite{PSV} the local assumption was weakened further to semiprimitivity (defined above). \\

\noindent\textbf{PSV Conjecture. }\label{sp}
There exists a function $f:\mathbb{N}\to \mathbb{N}$ such that, if $(\Gamma,G)\in \mathcal{A}(d)$  is locally semiprimitive, then $(\Gamma,G)$ is $f$-bounded.
\\

In spirit, these conjectures are similar to the~1967
conjecture of Charles Sims~\cite{Sims}, proved in~\cite{CPSS}, that for a
$G$-vertex-primitive graph $\Gamma$, the order of the stabiliser of a
vertex is bounded above by some function of the
valency of $\Gamma$. Unfortunately the methods in~\cite{CPSS}, using information about maximal subgroups of nonabelian simple groups, are not transferable to attack the other conjectures, and all three remain open.

As we hinted to above, one approach towards proving the Weiss Conjecture was to prove subcases of it by placing additional constraints on the local action. After a series of papers by Trofimov~\cite{Tr1,Tr2,Tr3,Tr4}, this approach culminated in 1994 in a proof of the Weiss Conjecture in the case of locally $2$-transitive graphs. 
Also some progress on the PSV Conjecture was made in~\cite{PSV}, by restricting further the local semiprimitive action. 

An alternative approach to studying the Weiss Conjecture was initiated by the first author in \cite{ImpGr,PConj} using normal quotients  to reduce to quasiprimitive and biquasiprimitive vertex actions. This was taken further in~\cite{CLP}, where an analysis of $G$-locally primitive graphs with $G$ quasiprimitive on vertices was undertaken, considering separately each of the eight types of quasiprimitive groups according to the quasiprimitive group subdivision described in~\cite{P2}. 
For six of the eight quasiprimitive types it was proved that $|G_\alpha|$ is bounded above by an explicit function of the valency, reducing the problem of proving the Weiss conjecture for quasiprimitive group actions to the almost simple and product action types AS and PA (\cite[Section~$2$]{CLP}). The PA type was also examined in~\cite[Proposition~$2.2$]{CLP} but, unfortunately, the proof contains an error which we discovered while working on the Praeger conjecture. (Example~\ref{ex:4} 
gives a counter-example to a claim made in the proof of~\cite[Proposition~$2.2$]{CLP}.)

Our results shed new light on the Weiss and Praeger Conjectures. Indeed, Proposition~\ref{prop} together with  Theorems~\ref{thm:mainqp} and~\ref{thm:mainbiqp} show that the Weiss and Praeger Conjectures hold true if and only if the graphs 
$(\Lambda,T)$ in Theorem~\ref{thm:mainqp} and  $(\Lambda_i,T)$ in Theorem~\ref{thm:mainbiqp} are $f$-bounded for some function $f$ depending only on their valencies. Thus 
Theorems~\ref{thm:mainqp} and~\ref{thm:mainbiqp} reduce the Weiss and Praeger Conjectures to a sequence of problems about $T$-vertex-transitive graphs, for various families of non-abelian simple groups $T$ with $|T|\to\infty$. These problems are discussed and successfully solved for many families of simple groups in~\cite{WEISS}. 

Although $(\Lambda,T)$  in Theorem~\ref{thm:mainqp}, and the $(\Lambda_i,T)$ in Theorem~\ref{thm:mainbiqp}, are uniquely determined by $(\Gamma,G)$ and inherit many of the properties of $(\Gamma,G)$, we will see in Examples~\ref{ex:2},~\ref{ex:3} and~\ref{ex:4} that `local properties' are not necessarily preserved by this reduction (for instance, if $(\Gamma,G)$ is locally primitive, then $(\Lambda,T)$ is not 
 necessarily even locally quasiprimitive).  

It would therefore be very interesting to find which local properties of $(\Gamma,G)$ are inherited by the $(\Lambda_i,T)$. In fact, it may be possible to prove the Weiss and the Praeger Conjectures, using Theorems~\ref{thm:mainqp} and~\ref{thm:mainbiqp}, by proving a stronger conjecture for $(\Gamma,T)\in \mathcal{A}(d)$, with $T$ in a family of  non-abelian simple groups, in which the local action of $(\Gamma,T)$ is further relaxed. We leave this as an open problem.

\begin{problem}\label{prob}Which properties of $(\Gamma,G)\in \mathcal{A}(d)$, with $G$ quasiprimitive on vertices, are inherited by $(\Lambda,T)$ in Theorem~\ref{thm:mainqp}? 
Which properties of $(\Gamma,G)\in \mathcal{A}(d)$, with $G$ biquasiprimitive  on vertices, are inherited by $(\Lambda_1,T),(\Lambda_2,T)$ in Theorem~\ref{thm:mainbiqp}?
\end{problem}

\subsection{Structure of the paper}

In Section~\ref{sec:key} we give some preliminary and fundamental results that are of importance in the rest of our work. Before stating the main theorem of Section~\ref{sec:key} in its full generality, we need a definition extending Definition~\ref{def:Ad}.

\begin{definition}\label{Nbdd}
{\rm Let $f_1,f_2:\mathbb{N}\to \mathbb{N}$ be functions. Let $\Gamma$ be a connected graph with every vertex of valency at most $d$ and $N\leq \Aut(\Gamma)$. We say that $(\Gamma,N)$ is \emph{$(f_1,f_2)$-bounded} if the number of orbits of $N$ on $V\Gamma$ is at most $f_1(d)$, and also $|N_\alpha|\leq f_2(d)$ for every $\alpha\in V\Gamma$.} 
\end{definition}

For example, if $(\Gamma,G)\in\mathcal{A}(d)$ is $f$-bounded, then $(\Gamma,G)$ is $(1,f)$-bounded where $1$ denotes the constant function with value 1.
In Section~\ref{sec:key}, we prove the following result which often leads to reductions in  `boundedness proofs'.
\begin{theorem}\label{thm:1}
Let $(\Gamma,N)$ be $(f_1,f_2)$-bounded and $G\leq \Aut(\Gamma)$ with $N\trianglelefteq G$. Then there exists a function $f_3:\mathbb{N}\to \mathbb{N}$  such that $(\Gamma,G)$ is $(f_1,f_3)$-bounded.
\end{theorem}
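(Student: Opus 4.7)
The $f_1$-part of $(f_1, f_3)$-boundedness is automatic: since $N\leq G$, each $G$-orbit on $V\Gamma$ is a union of $N$-orbits, so $(\Gamma,G)$ has at most $f_1(d)$ orbits. The substance is producing $f_3$ bounding $|G_\alpha|$, and I proceed by three successive reductions. First, since $N\trianglelefteq G$, the action of $G$ on the set $V\Gamma/N$ of (at most $f_1(d)$) $N$-orbits has kernel $K\supseteq N$ of index at most $f_1(d)!$. Hence $|G_\alpha|\leq f_1(d)!\cdot|K_\alpha|$, and it suffices to bound $|K_\alpha|$.

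Second, $K$ has the same orbits as $N$; in particular $N\trianglelefteq K$ acts transitively on $\Delta_1:=\alpha^N$, and the Frattini argument yields $K=NK_\alpha$, whence $|K_\alpha/N_\alpha|=|K/N|$ and $|K_\alpha|\leq f_2(d)\cdot|K/N|$. Third, for each $N$-orbit $\Delta_i$ pick a representative $\alpha_i$ in $\Delta_i\cap B_{f_1(d)-1}(\alpha)$ --- possible because $\Gamma_N$ is connected on at most $f_1(d)$ vertices and hence has diameter at most $f_1(d)-1$. An element $g\in K_\alpha$ is then determined by the data $(\phi_g,\, g\alpha_2,\ldots,g\alpha_k)$, where $\phi_g\in\Aut(N)$ is the conjugation automorphism $n\mapsto gng^{-1}$. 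Because $g$ is a graph isometry fixing $\alpha$, each $g\alpha_i$ lies in $B_{f_1(d)-1}(\alpha)$, whose size is bounded in terms of $d$ and $f_1(d)$. To bound the number of admissible $\phi_g$ I would use a Sabidussi-style argument: such a $\phi_g$ must send each $N_{\alpha_i}$ to an $N$-conjugate of itself and must preserve, up to suitable translation, the ``edge cosets'' $S_{ij}:=\{n\in N:n\alpha_i\in\Gamma(\alpha_j)\}$, each of which is a union of at most $d$ left cosets of $N_{\alpha_i}$. By connectedness of $\Gamma$, the $S_{ij}$-coset representatives together with the $N_{\alpha_i}$'s generate $N$, so $\phi_g$ is determined by its values on a generating set of bounded size. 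This yields a bound on $|K/N|$ and hence on $|G_\alpha|$.

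The main technical obstacle is the Sabidussi-style counting in the third step. Classically this construction treats a transitive normal subgroup; to handle our non-transitive $N$ one must carry out the count jointly across all orbits $\Delta_1,\ldots,\Delta_k$. A particular subtlety is that $N$ sits only as a subdirect product of its restrictions $N^{\Delta_i}$, so verifying that the chosen generators of $N$ really generate it (rather than a proper subgroup) requires care and uses the connectedness of both $\Gamma$ and $\Gamma_N$. Once this is settled, one obtains $f_3$ as an explicit function of $d$, $f_1(d)$, and $f_2(d)$.
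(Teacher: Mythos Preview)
Your proposal is essentially correct, and the underlying mechanism---picking representatives $\alpha_i$ from each $N$-orbit, forming the ``edge sets'' $S_{ij}\subseteq N$, and exploiting that conjugation by elements of a point-stabiliser is tightly constrained on these sets---is exactly the idea the paper uses. The paper's execution, however, is cleaner on two points. First, instead of passing to the kernel $K$ of the action on $N$-orbits (your first reduction) and then working with $K_\alpha$, the paper simply takes $H=\bigcap_i G_{\beta_i}$ and bounds $|G_{\beta_i}:H|\leq d^{t-1}$ directly, using that the $\beta_i$ were chosen to span a connected induced subgraph. Second, and more importantly, because every $\beta_i$ is fixed by $H$, conjugation by $H$ fixes the set $S=\bigcup_{i,j}S_{ij}$ \emph{setwise}; so after showing $H$ acts faithfully on $\langle S\rangle$ (equivalently, $C_H(\langle S\rangle)=1$), one gets the clean bound $|H|\leq|S|!$ without any need to track translated edge-sets or count images of generators in target sets. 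Your route, by fixing only $\alpha_1$, forces you to follow the $g\alpha_i$ and work with moving targets $S'_{ij}$, which is why your counting is messier.

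Two minor remarks. Your ``second reduction'' to $|K/N|$ is never actually used: your third step bounds $|K_\alpha|$ directly via the data $(\phi_g,g\alpha_2,\ldots,g\alpha_k)$, and this already suffices. And the ``subtlety'' you flag---that the $S_{ij}$ together with the $N_{\alpha_i}$ generate $N$---is real but dissolves once you notice (as the paper proves, via connectedness of $\Gamma$) that $\langle S\rangle$ is already transitive on each $\Delta_i$; then $\langle S,N_{\alpha_1}\rangle$ is a subgroup of $N$ that is transitive on $\Delta_1$ and contains the full stabiliser $N_{\alpha_1}$, hence equals $N$. In fact the paper does not even need $\langle S\rangle=N$: faithfulness of $H$ on $\langle S\rangle$ plus $S^H=S$ is enough for $|H|\leq|S|!$.
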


\begin{remark}\label{rmrm}
{\rm The proof of Theorem~\ref{thm:1} is constructive and it shows that we can take $f_3(d)=d^{f_1(d)-1}(df_1(d)^2f_2(d))!$.  Here we do not claim that such an $f_3$ is the best possible function for Theorem~\ref{thm:1}. But it would be interesting to know whether our choice of $f_3$ could be \emph{significantly} improved.}
\end{remark}

Section~\ref{sec:auxiliary} contains an auxiliary lemma needed in the proof of Theorem~\ref{thm:mainqp}. Section~\ref{redToAS} contains the proof of Theorem~\ref{thm:mainqp}  and Section~\ref{redToAS2} contains the proof of Theorem~\ref{thm:mainbiqp}. 
Finally, Section~\ref{sec:examples} contains the examples mentioned in the introduction.

In this paper all groups will be finite and  graphs will be finite and
simple. 

\section{Boundedness}\label{sec:key}

We start this section recalling some standard definitions. Given a group $N$ and a subset $S$ of $N$, we define the Cayley digraph of $N$ over $S$ (denoted by $\Cay(N,S)$) as the digraph with vertex set $N$ and with arc set $\{(n,n')\in N\times N\mid n(n')^{-1}\in S\}$. Clearly, $\Cay(N,S)$ is an undirected graph if and only if $S=S^{-1}$. Also the number of connected components of $\Cay(N,S)$ is $|N:\langle S\rangle|$. The main aim in this section is to prove Theorem~\ref{thm:1} for the function $f_3(d)$  given in Remark~\ref{rmrm}.

\medskip
\noindent\emph{Proof of Theorem~\ref{thm:1}. }
Let $(\Gamma,N)$ be a connected $(f_1,f_2)$-bounded graph and $N\trianglelefteq G\leq \Aut(\Gamma)$. 
Since $N\subseteq G$ and $N$ has at most $f_1(d)$ orbits, the group $G$ also has at most $f_1(d)$ orbits. It remains to show that $|G_\alpha|\leq f_3(d)$ for every $\alpha\in V\Gamma$.
 
Let $\mathcal{O}_1,\ldots,\mathcal{O}_t$ be the orbits of $N$ on $V\Gamma$. We claim that $\Gamma$ contains $t$ vertices $\beta_1,\ldots,\beta_t$, with $\beta_i\in \mathcal{O}_i$ for every $i$, such that the subgraph induced by $\Gamma$ on $\{\beta_1,\ldots,\beta_t\}$ is connected. Let $X$ be a subset of vertices of $\Gamma$ of maximal size with the properties   $|X\cap \mathcal{O}_i|\leq 1$ for every $i$, and the subgraph induced by $\Gamma$ on $X$ is connected. If $|X|=t$, then the claim is proved. Suppose then that $|X|=l<t$. Without loss of generality we may assume that $X=\{\beta_1,\ldots,\beta_l\}$. Let $v$ be a vertex in $\mathcal{O}_{l+1}$. Since $\Gamma$ is connected, there exists a path $\beta_1=v_1,\ldots,v_u=v$ in $\Gamma$ from $\beta_1$ to $v$. Let $i$ be minimal such that $v_i\notin \cup_{j=1}^l\mathcal{O}_j$. In particular, $i\geq 2$ and $v_{i-1}\in \mathcal{O}_{k}$ for some $k\leq l$. Since $N$ is transitive on $\mathcal{O}_{k}$, there exists $n\in N$ such that $\beta_{k}=v_{i-1}^n$. So, $v_i^n$ is adjacent to $\beta_{k}$ and $v_i^n\notin \cup_{j=1}^l\mathcal{O}_j$. Set $X'=X\cup\{v_i^n\}$. By construction, $X\subset X'$, the subgraph induced by $\Gamma$ on $X'$  is connected and $X'$ contains at most one vertex from each $\mathcal{O}_i$. This contradicts the maximality of $X$. Thus $|X|=t$ and the claim is proved. 

Fix $\beta_1,\ldots,\beta_t$, with $\beta_i\in \mathcal{O}_i$ for every $i$, such that the subgraph induced by $\Gamma$ on $\{\beta_1,\ldots,\beta_t\}$ is connected. Let $S$ be the set 
\begin{equation*}
S=\{n\in N\mid \textrm{there exists }i \textrm{ with }\beta_i^n\in\cup_{j=1}^t\Gamma(\beta_j)\}
\end{equation*} and let $\tilde{\Gamma}$ be the Cayley digraph on $N$ with connection set
$S$, that is, $\tilde{\Gamma}=\mathrm{Cay}(N,S)$. Given $1\leq i,j\leq t$, the number of elements $n\in N$ with $\beta_i^n\in \Gamma(\beta_j)$ is at most $|\Gamma(\beta_j)||N_{\beta_i}|\leq df_2(d)$. Therefore $|S|\leq t^2df_2(d)\leq df_1(d)^2f_2(d)$.

Set $H=\cap_{j=1}^tG_{\beta_j}$ and note that, by connectivity of the induced graph on $\{\beta_1,\ldots,\beta_t\}$, for each $i$, the index $|G_{\beta_i}:H|\leq d(d-1)^{t-2}<d^{t-1}$.  Since $H$ fixes a vertex in each orbit of $N$ on $V\Gamma$ and since $C=C_{\Sym(V\Gamma)}(N)$ permutes the $\mathcal{O}_i$ and the setwise stabiliser in $C$ of each $\mathcal{O}_i$ induces a semiregular action on it, we obtain that $C_{H}(N)=1$. Thus the group $H$ acts faithfully on $N$ by conjugation. Let $\tilde{G}=N\rtimes H$ be the semidirect product
of $N$ by $H$ with respect to this action. We claim that $\tilde{G}$ acts as a group of
automorphisms on $\tilde{\Gamma}$ by setting $$\gamma^{ng}=(\gamma n)^g,$$
for $\gamma\in V\tilde{\Gamma}$, $n\in N$ and $g\in H$. It is straightforward to check 
that this is a well defined action of $\tilde{G}$ on $V\tilde{\Gamma}=N$. Let $n$ be in
$N$, $g$ be in $H$ and $(\gamma,\gamma')$ be an arc of
$\tilde{\Gamma}$, that is, $\gamma\gamma'^{-1}\in S$. By definition of a Cayley
digraph, $(\gamma,\gamma')^{ng}=((\gamma n)^g,(\gamma'
n)^g)=(\gamma^gn^g,\gamma'^{g}n^g)$ is an arc of $\tilde{\Gamma}$ if and only if
$\gamma^gn^g(\gamma '^gn^g)^{-1}=\gamma^g(\gamma'^{g})^{-1}=(\gamma\gamma'^{-1})^g$
lies in $S$. Hence to prove the claim it remains to prove that $H$
leaves the set $S$ invariant under conjugation. Let $s\in S$ (with $\beta_i^s\in \Gamma(\beta_j)$ say) and $g\in H$. We have
$$\beta_i^{s^g}=\beta_i^{g^{-1}sg}=\beta_i^{sg}\in\Gamma(\beta_j)^g=\Gamma(\beta_j^g)=\Gamma(\beta_j),$$
and thus $\beta_i^{s^g}\in \Gamma(\beta_j)$. By definition of $S$, we have $s^g\in S$.
Since $s$ is an arbitrary element of $S$, this shows that $S^g=S$ and the claim is proved.

Let $\Sigma$ be the subgraph of $\Gamma$ induced on the 
set $\{\beta_i^x\mid 1\leq i\leq t, x \in\langle S\rangle\}$. We claim that $\Gamma=\Sigma$. We argue by contradiction. Let $v$ be a vertex in $V\Gamma\setminus V\Sigma$. Since $\Gamma$ is connected, there exists a path $\beta_1=v_1,\ldots,v_u=v$ in $\Gamma$ from $\beta_1$ to $v$. Let $i$ be the minimum such that $v_{i}\notin V\Sigma$. In particular, $i\geq 2$ and $v_{i-1}\in V\Sigma$ and so $v_{i-1}=\beta_k^x$ for some $k\leq t$ and $x\in \langle S\rangle$. Also, the vertex $v_i$ lies in $\mathcal{O}_{k'}$ for some $ k'\leq t$, so as $N$ is transitive on $\mathcal{O}_{k'}$, we have  $v_i=\beta_{k'}^n$ for some $n\in N$. Since $v_{i-1}$ is adjacent to $v_i$, we get $\beta_{k'}^{nx^{-1}}=v_i^{x^{-1}}\in \Gamma(v_{i-1}^{x^{-1}})=\Gamma(\beta_k)$. By definition of $S$, we have $nx^{-1}\in S$. Finally, as $x\in \langle S\rangle$, we obtain $n\in \langle S\rangle$, and $v_i=\beta_{k'}^n\in V\Sigma$, a contradiction. Thus $V\Sigma=V\Gamma$ and hence $\Sigma=\Gamma$.

Since $\Gamma=\Sigma$, the group $\langle S\rangle$ acts transitively on each $N$-orbit. As $H$ fixes a vertex from each $\langle S\rangle$-orbit, we have $C_H(\langle S\rangle)$=1. Let $\tilde{\Gamma}_1$ be the connected component of $\tilde{\Gamma}$ containing $1$
(that is, $V\tilde{\Gamma}_1=\langle S\rangle$) and let $L$ be the permutation group
induced by $H$ on $\tilde{\Gamma}_1$. Since $H$ acts
as a group of automorphisms on $\langle S\rangle=V\tilde{\Gamma}_1$, we obtain $L\cong H/C_H(\langle S\rangle)=H$. Thus $H$ acts faithfully on $V\tilde{\Gamma}_1$ and $H\leq \Aut(\langle S\rangle)$. Also, as $H$ fixes setwise $S$, we get $|H|\leq 
|S|!\leq (df_1(d)^2f_2(d))!$. Therefore $|G_{\alpha}|\leq d^{t-1}|H|\leq d^{f_1(d)-1}(df_1(d)^2f_2(d))!$ for every $\alpha\in V\Gamma$ and the proof is complete.~$\qed$

\begin{remark}\label{rm:1}{\rm Although Theorem~\ref{thm:1} is a very general statement, we often use it when $N$ is vertex-transitive, in which case we can take $f_1(d)=1$ and $f_3(d)=(df_2(d))!$.}
\end{remark}

In the following corollary we single out the special case of Theorem~\ref{thm:1}  most useful for the rest of the paper.

\begin{corollary}\label{cor:good}
Let $(\Gamma,G)$ be in $\mathcal{A}(d)$ and $N$ a normal subgroup of $G$. If $N$ acts regularly on $V\Gamma$, then $|G_\alpha|\leq  d!$ for every $\alpha\in V\Gamma$. 
\end{corollary}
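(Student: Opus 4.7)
The plan is simply to recognize that regularity of $N$ on $V\Gamma$ is the extremal case $(f_1,f_2)=(1,1)$ of $(f_1,f_2)$-boundedness, and then to invoke Theorem~\ref{thm:1} directly.

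First I would observe that since $N$ acts regularly on $V\Gamma$, the group $N$ is transitive on $V\Gamma$, so the number of $N$-orbits is $1$, and the stabiliser $N_\alpha$ is trivial for every $\alpha\in V\Gamma$. Consequently $(\Gamma,N)$ is $(f_1,f_2)$-bounded with the constant functions $f_1(d)=1$ and $f_2(d)=1$. The hypothesis that $N$ is normal in $G$, together with the inclusion $G\leq \Aut(\Gamma)$ coming from $(\Gamma,G)\in\mathcal{A}(d)$, then places us exactly in the setting of Theorem~\ref{thm:1}.

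Next I would apply Theorem~\ref{thm:1} (or equivalently the vertex-transitive special case highlighted in Remark~\ref{rm:1}) to conclude that $(\Gamma,G)$ is $(f_1,f_3)$-bounded for the explicit function $f_3$ given in Remark~\ref{rmrm}. Substituting $f_1(d)=1$ and $f_2(d)=1$ into $f_3(d)=d^{f_1(d)-1}(d f_1(d)^2 f_2(d))!$ gives $f_3(d)=d^{0}\cdot (d\cdot 1\cdot 1)!=d!$, and this bound holds for $|G_\alpha|$ at every vertex $\alpha$, yielding the claimed inequality $|G_\alpha|\leq d!$.

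There is essentially no obstacle here: the entire content is the verification that $N$ regular $\Leftrightarrow$ ($N$ transitive and $N_\alpha=1$), which allows us to take $f_1,f_2$ to be the constant function $1$, and then to read off $f_3(d)=d!$ from the formula in Remark~\ref{rmrm}. The corollary is therefore just the cleanest instance of the general boundedness theorem, and the proof should fit in a few lines.
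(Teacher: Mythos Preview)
Your proposal is correct and follows essentially the same approach as the paper's own proof: observe that regularity gives $(\Gamma,N)$ is $(1,1)$-bounded, then apply Theorem~\ref{thm:1} with the explicit $f_3$ from Remark~\ref{rmrm} to obtain $f_3(d)=d!$. The paper's proof is slightly terser but logically identical.
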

\begin{proof}
If $N$ acts regularly on $V\Gamma$, then $N_\alpha=1$ for every $\alpha\in V\Gamma$ and so $(\Gamma,N)$ is $(1,1)$-bounded. Theorem~\ref{thm:1} with $f_3(d)$ as in Remark~\ref{rmrm} yields $(\Gamma,G)$ is  $(1,d!)$-bounded.
\end{proof}



\section{Auxiliary lemma}\label{sec:auxiliary}

This section contains only Lemma~\ref{lemma:auxiliary}. This very technical result  will be important in the proof of Theorem~\ref{thm:mainqp}.

\begin{lemma}\label{lemma:auxiliary}Let $T$ be a non-abelian simple group, $l\geq 1$, and $R$ a proper subgroup of $T$. Let $m^{(1)}=(m_1^{(1)},\ldots,m_l^{(1)}),\ldots,m^{(d)}=(m_1^{(d)},\ldots,m_l^{(d)})$ be elements of $T^l$ such that, for each $i \in \{1,\ldots,d\}$, the set of entries $\{m_{j}^{(i)}\}_j$ of $m^{(i)}$ contains at most $d$ distinct elements from $T$. Let  $y^{(i)}$ and $z^{(i)}$ be in $R^l$, and set $n^{(i)}:=y^{(i)}m^{(i)}z^{(i)}$, for $i=1,\ldots,d$. If $T^l=\langle n^{(1)},\ldots,n^{(d)}\rangle R^l$, then $l\leq d^{d}|R|^{2d}$.
\end{lemma}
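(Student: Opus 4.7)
The plan is to partition the coordinate set $\{1,\ldots,l\}$ into equivalence classes and bound separately the number of classes and the size of each class, aiming for $d^d$ classes each of size at most $|R|^{2d}$.

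I would first declare $j\sim j'$ if and only if $m_j^{(i)}=m_{j'}^{(i)}$ for every $i\in\{1,\ldots,d\}$. Since each $m^{(i)}$ has at most $d$ distinct entries, its coordinates partition $\{1,\ldots,l\}$ into at most $d$ blocks; the common refinement of the $d$ partitions (one for each $i$) therefore has at most $d^d$ equivalence classes. So it suffices to show that each class has size at most $|R|^{2d}$.

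Next, I would fix an equivalence class $C$ and a representative $j_0\in C$. For every $j\in C$ and $i\in\{1,\ldots,d\}$, the equality $m_j^{(i)}=m_{j_0}^{(i)}$ combined with $n_j^{(i)}=y_j^{(i)}m_j^{(i)}z_j^{(i)}$ produces elements $a_j^{(i)}:=y_j^{(i)}(y_{j_0}^{(i)})^{-1}\in R$ and $b_j^{(i)}:=(z_{j_0}^{(i)})^{-1}z_j^{(i)}\in R$ satisfying $n_j^{(i)}=a_j^{(i)}\,n_{j_0}^{(i)}\,b_j^{(i)}$. Define $\tau:C\to(R\times R)^d$ by $\tau(j)=(a_j^{(i)},b_j^{(i)})_{i=1}^d$. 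If $\tau$ is injective on $C$, then $|C|\leq|R|^{2d}$ and summing over classes gives the desired bound $l\leq d^d|R|^{2d}$.

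The injectivity of $\tau$ is the main obstacle. The plan is as follows: if $\tau(j)=\tau(j')$ then $n_j^{(i)}=n_{j'}^{(i)}$ for every $i$, so the coordinate projection $\pi_{jj'}:T^l\to T\times T$ sends each generator $n^{(i)}$ of $K:=\langle n^{(1)},\ldots,n^{(d)}\rangle$ into the diagonal $\Delta=\{(t,t):t\in T\}$, whence $\pi_{jj'}(K)\leq\Delta$. Projecting the hypothesis $T^l=KR^l$ to coordinates $j,j'$ yields $T\times T=\pi_{jj'}(K)(R\times R)\subseteq\Delta(R\times R)$. But a pair $(t_1,t_2)$ lies in $\Delta(R\times R)$ precisely when $t_1^{-1}t_2\in R$, so the equality $\Delta(R\times R)=T\times T$ would force $R=T$, contradicting that $R$ is proper. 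Hence $j=j'$, so $\tau$ is injective. I note in passing that only the properness of $R$ in $T$ is used in this argument; neither the simplicity nor the non-abelianness of $T$ is needed for the lemma.
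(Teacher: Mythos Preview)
Your proof is correct and follows essentially the same approach as the paper: both arguments first use pigeonhole on the at most $d$ distinct entries of each $m^{(i)}$ to find a set of coordinates (your equivalence class $C$) on which every $m^{(i)}$ is constant, then use a second pigeonhole on the tuples $(y_j^{(i)},z_j^{(i)})_{i=1}^d\in R^{2d}$ to locate two coordinates $j,j'$ with $n_j^{(i)}=n_{j'}^{(i)}$ for all $i$, and finally derive the contradiction $T\times T=\Delta(R\times R)$ from projecting $T^l=\langle n^{(1)},\ldots,n^{(d)}\rangle R^l$ onto the $(j,j')$-coordinates. Your packaging via the explicit equivalence relation and the map $\tau$ is slightly cleaner than the paper's iterated pigeonhole, and your closing remark that simplicity and non-abelianness of $T$ are not used is accurate (the paper's proof likewise only uses that $R$ is a proper subgroup of $T$).
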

\begin{proof}
Write $y^{(i)}=(y_{1}^{(i)},\ldots,y_l^{(i)})$ and $z^{(i)}=(z_{1}^{(i)},\ldots,z_l^{(i)})$ with $y_{j}^{(i)},z_j^{(i)}\in R$. We denote by $(n^{(i)})_j$  the $j$th coordinate of $n^{(i)}$. Set $U=\langle n^{(1)},\ldots,n^{(d)}\rangle$, and assume that $T^l=UR^l$ and $l>d^d|R|^{2d}$. 

Since  the element $m^{(i)}$ has at most $d$ distinct entries, by the pigeonhole principle we obtain that $m^{(i)}$ has more than $d^{d-1}|R|^{2d}$ coordinates with the same entry. Applying this argument for each $i\in \{1,\ldots,d\}$, we obtain that there exists a set of coordinates $X\subseteq \{1,\ldots,l\}$ with $|X|>|R|^{2d}$ and such that every $m^{(i)}$ is constant on the coordinates from $X$.

 Let $Y$ be the $(d\times |X|)$-array $(y_x^{(i)})_{1\leq i\leq
  d,x\in X}$, and let $Z$ be the  $(d\times |X|)$-array $(z_{x}^{(i)})_{1\leq i\leq
  d,x\in X}$.  The columns of $Y$ and $Z$ are elements of $R^d$.
Therefore, for each of $Y$ and  $Z$, there are at most $|R|^d$ possibilities for each column. As $|X|>|R|^{2d}$, by the pigeonhole principle, there exist distinct $x,x'$ in 
$X$ such that the $x$th and $x'$th columns of $Y$ are
equal and the $x$th and $x'$th columns of $Z$ are
equal. Hence 
$$
y_{x}^{(i)}=y_{x'}^{(i)},\quad z_{x}^{(i)}=z_{x'}^{(i)}
\quad
\textrm{for every }i=1,\ldots,d.$$ 
This yields 
$$
(n^{(i)})_x=y_x^{(i)}m_x^{(i)}z_{x}^{(i)}=y_{x'}^{(i)}m_{x'}^{(i)}z_{x'}^{(i)}=(n^{(i)})_{x'}\quad
\textrm{for every }i=1,\ldots,d.$$ Therefore the projection of $U$ 
to the group $T\times T$ (obtained from taking the $x$th and $x'$th coordinate entries in $T^l$) is contained in
the diagonal subgroup $\{(t,t)\mid t\in T\}$. As $T^l=U R^l$, we obtain
\begin{equation}\label{eq:u}
T\times T=\{(t,t)\mid t\in
T\}(R\times R).
\end{equation}
Let $t$ be an element of $T\setminus R$. From~$(\ref{eq:u})$, we have $(1,t)=(a,a)(b,c)$ for some $a\in T$
and $b,c\in R$. This 
yields  $a=b^{-1}\in R$ and $t=ac\in R$, a
contradiction. This contradiction arose from the assumption
$l>d^d|R|^{2d}$. Hence $l\leq d^d|R|^{2d}$ and the lemma is proved.
\end{proof}

\section{Proof of Theorem~\ref{thm:mainqp}}
\label{redToAS}
In this section we use the subdivision
into eight types (namely HA, HS, HC, SD, CD, TW,
PA and AS) of the finite quasiprimitive permutation groups, and we refer the reader to~\cite{P2} for details. 
Our main tool for dealing with  quasiprimitive groups is Corollary~\ref{cor:good}. Namely, in
Corollaries~\ref{cor:many} and~\ref{cor:SD}, for  $(\Gamma,G)\in \mathcal{A}(d)$,  we give an absolute
upper bound (in terms of the valency $d$) on the
size of the stabiliser of 
a vertex in $G$, if $G$ is quasiprimitive of type HA, HS, HC, TW, SD or CD. We note that these results were proved in~\cite{CLP} in the case where $G$ is locally primitive.

\begin{corollary}\label{cor:many}
Let $(\Gamma,G)$ be in $\mathcal{A}(d)$ with $G$ a quasiprimitive group of type HA, HS,   HC or TW on $V\Gamma$. Then
  $|G_\alpha|\leq d!$ for every $\alpha\in V\Gamma$. 
\end{corollary}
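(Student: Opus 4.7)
The plan is to reduce all four quasiprimitive types directly to Corollary~\ref{cor:good}, by exhibiting in each case a normal subgroup of $G$ that acts regularly on $V\Gamma$.

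Recall the structure of the socle (or of the minimal normal subgroups of $G$) in each type, as described in~\cite{P2}: in type HA, $\soc(G)$ is elementary abelian and regular on $V\Gamma$; in type TW, $\soc(G)\cong T^l$ with $T$ nonabelian simple, and it is the unique minimal normal subgroup, acting regularly; in type HS, $G$ has exactly two minimal normal subgroups $M_1,M_2$, each isomorphic to a nonabelian simple group $T$, and each acting regularly on $V\Gamma$ (with $\soc(G)=M_1\times M_2\cong T\times T$); in type HC the situation is analogous, with $M_1,M_2\cong T^l$ (some $l\geq 2$) each regular and $\soc(G)=M_1\times M_2$.

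So in each of these four cases, $G$ has a normal subgroup $N$ that is regular on $V\Gamma$: take $N=\soc(G)$ for types HA and TW, and $N=M_1$ (say) for types HS and HC. Since $N\trianglelefteq G$ and $N$ is regular on $V\Gamma$, Corollary~\ref{cor:good} applies directly and yields $|G_\alpha|\leq d!$ for every $\alpha\in V\Gamma$.

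There is essentially no obstacle: the entire content of the corollary is packaged into the quasiprimitive-type classification plus Corollary~\ref{cor:good}. The only care needed is to quote the correct structural fact for each type (in particular for HS and HC, where the regular normal subgroup is a proper subgroup of the socle, not the whole socle), and to note that type SD/CD are deliberately excluded here because the socle is not regular in those types and they will be handled separately in Corollary~\ref{cor:SD}.
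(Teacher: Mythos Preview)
Your proof is correct and follows exactly the same approach as the paper: in each of the types HA, HS, HC, TW there is a regular normal subgroup, and Corollary~\ref{cor:good} then gives the bound $|G_\alpha|\leq d!$. The paper's proof is simply a two-line version of yours, asserting the existence of such a regular normal subgroup without spelling out case by case which one to take.
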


\begin{proof}
As  $G$ is quasiprimitive of type HA, HS, HC or TW, the group $G$
admits a regular normal 
subgroup $N$. From Corollary~\ref{cor:good}, we have
$|G_\alpha|\leq d!$ for every $\alpha\in V\Gamma$.  
\end{proof}

\begin{corollary}\label{cor:SD}
Let $(\Gamma,G)$ be in $\mathcal{A}(d)$ with $G$ a quasiprimitive group of type SD or CD on $V\Gamma$.  Then $|G_\alpha|\leq (dd!)!$ for every $\alpha\in V\Gamma$. 
\end{corollary}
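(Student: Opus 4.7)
The plan is to exhibit a normal subgroup $N$ of $G$ such that $(\Gamma,N)$ is $(1, d!)$-bounded, and then invoke Theorem~\ref{thm:1} (or equivalently Remark~\ref{rm:1}) to obtain $|G_\alpha| \leq (d \cdot d!)!$.

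First take $N = \soc(G)$; by quasiprimitivity of $G$, $N$ is transitive on $V\Gamma$, so it remains only to bound $|N_\alpha|$. Recall that $N = T^l$ and, in both SD and CD type, $N_\alpha$ is a direct product of ``block-diagonal'' copies of $T$: in SD type $N_\alpha$ is a single diagonal isomorphic to $T$, and in CD type the $l$ simple factors are partitioned into $k \geq 2$ blocks (permuted by $G$) and $N_\alpha = D_1 \times \cdots \times D_k \cong T^k$, where $D_i$ is the full diagonal in the $i$-th block. In both cases the minimal normal subgroups of $N_\alpha$ are precisely $D_1,\ldots,D_k$.

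Let $K$ denote the kernel of $N_\alpha$ acting on $\Gamma(\alpha)$. Since $K \trianglelefteq N_\alpha$, we have $K = \prod_{i \in S} D_i$ for some $S \subseteq \{1,\ldots,k\}$. If $S = \emptyset$, then $N_\alpha$ embeds in $\Sym(\Gamma(\alpha))$, so $|N_\alpha| \leq d!$, as needed. To rule out $S \neq \emptyset$ by contradiction, suppose $i \in S$, so that $D_i$ fixes $\Gamma(\alpha)$ pointwise. For every $\beta \in \Gamma(\alpha)$, $D_i \leq G_\beta \cap N = N_\beta$, and projecting $D_i$ onto the $i$-th block of factors forces $D_i$ to coincide with the corresponding block-diagonal $D_i^{(\beta)}$ of $N_\beta$ (both have order $|T|$). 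To iterate to $\beta$, let $L \trianglelefteq G$ be the kernel of $G$'s action on the $k$ blocks; since $L \supseteq N$, $L$ is vertex-transitive, so one may choose $g \in L$ with $\alpha^g = \beta$. Because $g$ acts trivially on blocks, conjugation by $g$ preserves the block-label of each diagonal, and the analogous kernel-index set at $\beta$ equals $S$, showing $D_i = D_i^{(\beta)}$ fixes $\Gamma(\beta)$ pointwise. Iterating along a path in $\Gamma$ and invoking connectedness, $D_i$ must fix every vertex of $\Gamma$, contradicting the faithful action of $G \leq \Aut(\Gamma)$.

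The main delicate point --- and where I expect the most care to be needed --- is precisely this propagation: one must use $g \in L$ (not an arbitrary element of $G$ taking $\alpha$ to $\beta$) to ensure that the kernel-index set $S$ transports trivially along each edge, so that $D_i^{(\beta)}$ is again in the kernel at $\beta$. Once $(\Gamma, N)$ is shown to be $(1, d!)$-bounded in this way, Theorem~\ref{thm:1} with $f_1(d)=1$ and $f_2(d)=d!$ delivers the stated bound $(dd!)!$.
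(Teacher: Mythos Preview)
Your argument is correct, but it takes a different route from the paper's. The paper never analyses the local action of $N_\alpha$; instead it exploits a structural feature of SD and CD types: the socle $N$ always contains a \emph{regular} normal subgroup $M$ (namely $T_1\times\cdots\times T_{k-1}$ in SD type, and $T^{(u-1)l}$ in CD type). One then applies Theorem~\ref{thm:1} twice---first to the pair $M\trianglelefteq N$ to get $|N_\alpha|\le d!$, and then to $N\trianglelefteq G$ to get $|G_\alpha|\le (dd!)!$. Your approach instead proves $|N_\alpha|\le d!$ directly by showing $N_\alpha$ is faithful on $\Gamma(\alpha)$, via the block-diagonal structure and a propagation argument. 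This yields slightly more information (faithfulness of the local action of the socle) at the cost of the careful bookkeeping with the index set $S$ and the block-preserving element $g\in L$. Incidentally, your propagation can be streamlined: since $G=NG_\alpha$ and $N$ acts trivially on the set of simple factors, $G_\alpha$ permutes the blocks transitively, so $N_\alpha$ is a \emph{minimal} normal subgroup of $G_\alpha$; hence the kernel $K$ is either trivial or all of $N_\alpha$, and in the latter case $N_\alpha=N_\beta$ for every neighbour $\beta$, making the induction on distance immediate without tracking $S$.
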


\begin{proof} 
Assume that $G$ is of type SD. Let $N$ be the socle of $G$. So,  $N=T_1\times \cdots \times T_k$ for
some $k\geq 2$, and each $T_i\cong T$ for some non-abelian simple group
$T$.  By definition of type SD, the group $M=T_1\times \cdots \times
T_{k-1}$ acts regularly on $V\Gamma$. Hence $(\Gamma,M)$ is $(1,1)$-bounded. Since $M\trianglelefteq N$, Theorem~\ref{thm:1} with $f_3(d)$ as in Remark~\ref{rmrm} yields that $(\Gamma,N)$ is $(1,f)$-bounded with $f(d)=d!$. As  $N\trianglelefteq G$, a similar application of Theorem~\ref{thm:1}, with $f_3(d)$  as in Remark~\ref{rmrm}, yields that $(\Gamma,G)$ is $(1,f')$-bounded with $f'(d)=(df(d))!=(dd!)!$.

Assume now that $G$ is of type CD. Then the vertex set
$V\Gamma$ admits a cartesian decomposition, that is,
$V\Gamma=\Delta^l$ for some set $\Delta$ and for some $l\geq 2$. Let
$N$ be the socle of $G$. Then 
$N\cong T^{ul}$ for some non-abelian simple group $T$ and some $u\geq
2$. Also, $G$ is permutation isomorphic to a subgroup of the wreath
product $H\wr \Sym(l)$ (endowed with the product action), where $H\subseteq
\Sym(\Delta)$ is quasiprimitive of type SD with socle $T^u$. 

As the socle of $H$ contains a regular normal subgroup isomorphic to
$T^{u-1}$,  the group $N$ 
contains a normal subgroup $M$ isomorphic to 
$T^{(u-1)l}$ acting regularly on $\Delta^l$. Hence $(\Gamma,M)$ is $(1,1)$-bounded. Since $M\trianglelefteq N$, Theorem~\ref{thm:1} with $f_3(d)$ as in Remark~\ref{rmrm} yields that $(\Gamma,N)$ is $(1,f)$-bounded with $f(d)=d!$. As  $N\trianglelefteq G$, a similar application of Theorem~\ref{thm:1} yields that $(\Gamma,G)$ is $(1,f')$-bounded with $f'(d)=(df(d))!=(dd!)!$.
\end{proof}

\begin{remark}\label{rm:1qp}
{\rm It is worth pointing out here that in
Corollaries~\ref{cor:many} and~\ref{cor:SD} there is no
local assumption on $G$. This quite remarkably shows that in
a quasiprimitive group $G$ of type HA, HS, HC, TW, SD or CD acting
vertex-transitively on a connected graph $\Gamma$, the size of the stabiliser of a
vertex is bounded above by a function of the valency of $\Gamma$ (see Theorem~\ref{thm:mainqp}~$(1)$).}
\end{remark}

In the rest of this section we deal with the case that $G$ is of type PA. 
We start with a definition and a lemma required in the proof of Theorem~\ref{prop:PAtype1qp}.

\begin{definition}\label{def:nnq}
{\rm Let $\Gamma$ be a $G$-vertex-transitive graph and $\Sigma$ a system of imprimitivity for $G$ in its action on $V\Gamma$. The \emph{quotient} $\Gamma_\Sigma$ is the graph whose vertices are the blocks $\sigma$ of $\Sigma$, with an edge between two distinct blocks $\sigma$ and $\eta$ of $\Sigma$, if and only if there is an edge of $\Gamma$ between $\alpha$ and $\beta$, for some $\alpha\in\sigma$ and some $\beta\in\eta$. 

The graph $\Gamma_\Sigma$ is $G$-vertex-transitive but (despite the similarity with Definition~\ref{def:nq}) there is no upper bound on the valency of $\Gamma_\Sigma$ in terms of  the valency of $\Gamma$.}
\end{definition}

\begin{lemma}\label{lemma:nrorbits}
Let $(\Gamma,G)$ be in $\mathcal{A}(d)$, $\Sigma$ a system of imprimitivity for the action of $G$ on $V\Gamma$ and $N$ a normal subgroup of $G$ with $N_\sigma$ transitive on $\sigma$ for every $\sigma$ in $\Sigma$. Then the number of orbits of $N_\sigma$ on $\Gamma_{\Sigma}(\sigma)$ is at most $d$, for every vertex $\sigma$ of $\Gamma_\Sigma$. 
\end{lemma}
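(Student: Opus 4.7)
The plan is to fix a vertex $\sigma$ of $\Gamma_\Sigma$ and a single point $\alpha\in \sigma$, and to produce a set of at most $d$ elements of $\Gamma_\Sigma(\sigma)$ which already meets every $N_\sigma$-orbit. The natural candidate is
\[
\mathcal{B}:=\{\,\eta\in \Gamma_\Sigma(\sigma)\,:\,\eta\cap \Gamma(\alpha)\neq\emptyset\,\},
\]
that is, the set of blocks of $\Sigma$ (other than $\sigma$) containing at least one neighbour of $\alpha$ in $\Gamma$. Since distinct blocks of $\Sigma$ are pairwise disjoint, each $\eta\in\mathcal{B}$ contains a distinct neighbour of $\alpha$, so $|\mathcal{B}|\leq |\Gamma(\alpha)|\leq d$; and the containment $\mathcal{B}\subseteq \Gamma_\Sigma(\sigma)$ is immediate from Definition~\ref{def:nnq}.

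The key step is then to verify that $\mathcal{B}$ meets every $N_\sigma$-orbit on $\Gamma_\Sigma(\sigma)$. Given any $\eta\in \Gamma_\Sigma(\sigma)$, Definition~\ref{def:nnq} provides $\alpha'\in\sigma$ and $\beta'\in\eta$ with $\{\alpha',\beta'\}$ an edge of $\Gamma$. By the hypothesis that $N_\sigma$ is transitive on $\sigma$, there exists $n\in N_\sigma$ with $(\alpha')^n=\alpha$. Then $(\beta')^n$ is a neighbour of $\alpha$ in $\Gamma$ and lies in the block $\eta^n$, while $\eta^n\neq\sigma$ because $\sigma^n=\sigma$ and $\eta\neq\sigma$; thus $\eta^n\in\mathcal{B}$. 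Since $n\in N_\sigma$, the blocks $\eta$ and $\eta^n$ lie in the same $N_\sigma$-orbit on $\Gamma_\Sigma(\sigma)$, which delivers the desired bound.

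I do not expect any serious obstacle: the whole content of the lemma is that transitivity of $N_\sigma$ on $\sigma$ lets one translate every edge leaving $\sigma$ into an edge incident to the fixed vertex $\alpha$, after which the valency bound $|\Gamma(\alpha)|\leq d$ carries the argument. The only small points requiring care are that $\Sigma$ is $G$-invariant, which guarantees that $\eta^n$ is again a block of $\Sigma$, and that the element $n$ must be produced inside $N_\sigma$ rather than merely inside $G_\sigma$, which is exactly the hypothesis invoked. In particular, normality of $N$ in $G$ plays no role in this particular lemma.
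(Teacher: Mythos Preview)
Your argument is correct and is in fact more direct than the paper's. The paper fixes $\alpha\in\sigma$, takes representatives $\beta_1,\ldots,\beta_r$ of the $G_\alpha$-orbits on $\Gamma(\alpha)$ with $\beta_i\in\eta_i$, and proceeds in two steps: first it shows $\Gamma_\Sigma(\sigma)=\bigcup_{i=1}^r\eta_i^{G_\sigma}$, and then it proves via an index computation (using that $N_\sigma\trianglelefteq G_\sigma$ and the Frattini-type equality $G_\sigma=G_\alpha N_\sigma$) that the number of $N_\sigma$-orbits on $\eta_i^{G_\sigma}$ is at most $|G_\alpha:G_{\alpha,\beta_i}|$; summing gives $\sum_i|G_\alpha:G_{\alpha,\beta_i}|=|\Gamma(\alpha)|\leq d$. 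Your approach bypasses this decomposition entirely: by using transitivity of $N_\sigma$ on $\sigma$ you move each edge leaving $\sigma$ to one based at $\alpha$, and the set $\mathcal{B}$ of blocks meeting $\Gamma(\alpha)$ is a transversal of size at most $d$ for the $N_\sigma$-orbits. Your observation that normality of $N$ is never invoked is also correct; the paper's route does use it (in its Claim~2), so your argument is both shorter and proves a slightly stronger statement.
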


\begin{proof}
Fix $\sigma$ a vertex of $\Gamma_\Sigma$ and $\alpha$ in $\sigma$. Let $\beta_1,\ldots,\beta_r$ be representatives of the orbits of $G_\alpha$ on $\Gamma(\alpha)$. Then $r\leq |\Gamma(\alpha)|\leq d$. Let $\eta_1,\ldots,\eta_r$ be in $\Sigma$ with $\beta_i$ in $\eta_i$, for $i=1,\ldots,r$. Now we prove two preliminary claims from which the lemma will follow immediately.

\smallskip

\noindent\textsc{Claim~1. }$\Gamma_\Sigma(\sigma)=\bigcup_{i=1}^r\eta_i^{G_\sigma}$.

\noindent By the definitions of $\Gamma_\Sigma$ and of the $\eta_i$, the right hand side is contained in the left hand side. Let $\eta$ be in $\Gamma_\Sigma(\sigma)$. By definition, there exists $\alpha'\in \sigma$ and $\beta\in \eta$ with $\beta\in \Gamma(\alpha')$. Since $G_\sigma$ is transitive on $\sigma$, there exists $g\in G_\sigma$ such that $\alpha=(\alpha')^g$. In particular, $\beta^g\in \Gamma(\alpha)$ and so there exists $h\in G_\alpha$ with $\beta_i=\beta^{gh}$, for some $i\in \{1,\ldots,r\}$. As $\beta_i\in \eta_i$, we get $\eta_i=\eta^{gh}$ with $gh\in G_\sigma$ and so Claim~1 follows.~$_\blacksquare$ 

\smallskip

\noindent\textsc{Claim~2. }The number of orbits of $N_\sigma$ on $\eta_i^{G_\sigma}$ is at most $|G_\alpha:G_{\alpha,\beta_i}|$. 

\noindent Clearly, $|\eta_i^{G_\sigma}|=|G_\sigma:G_{\sigma,\eta_i}|$. Also, as $N_\sigma$ is a normal subgroup of $G_\sigma$, we have that each orbit of $N_\sigma$ on $\eta_i^{G_\sigma}$ has size $|G_{\sigma,\eta_i}N_\sigma:G_{\sigma,\eta_i}|$. Therefore the number of orbits of $N_\sigma$ on $\eta_i^{G_\sigma}$ equals $|G_\sigma:G_{\sigma,\eta_i}N_\sigma|$.

Since by hypothesis  $N_\sigma$ is transitive on $\sigma$, we get  $G_\sigma=G_\alpha N_\sigma$. Note that, all of $G_\sigma$, $N_\sigma$ and $G_{\sigma,\eta_i}N_\sigma$ are transitive on $\sigma$ and hence $|G_\sigma:G_\alpha|=|G_{\sigma,\eta_i}N_\sigma:(G_{\sigma,\eta_i}N_\sigma)\cap G_\alpha|=|\sigma|$. Thus $|G_\sigma:G_{\sigma,\eta_i}N_\sigma|=|G_\alpha:(G_{\sigma,\eta_i}N_\sigma)\cap G_\alpha|\leq |G_\alpha:G_{\alpha,\beta_i}|$, (see Figure~\ref{pic:1}).~$_\blacksquare$

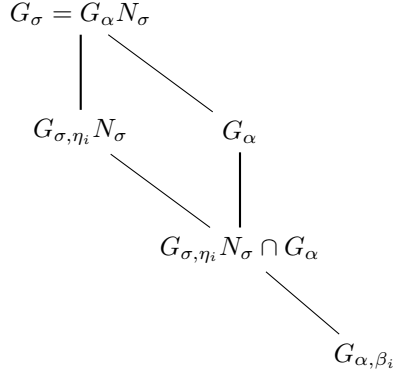
\begin{figure}[!h]
\begin{tikzpicture}[node distance   = 1cm ]
\tikzset{
    myarrow/.style={==, thick}
}  
        \node(A){$G_\sigma=G_\alpha N_\sigma$};
\node[below=of A](B){$G_{\sigma,\eta_i}N_\sigma$};
\node[right=of B](D){$G_\alpha$};
\node[below=of D](E){$G_{\sigma,\eta_i}N_\sigma\cap G_\alpha$};
\node[below=of E](F){};
\node[right=of F](G){$G_{\alpha,\beta_i}$};
\draw(E)--(G);
\draw[myarrow](A)--(B);
\draw(A)--(D);
\draw[myarrow](D)--(E);
\draw(B)--(E);
\end{tikzpicture}
\caption{Subgroup lattice for Lemma~\ref{lemma:nrorbits}}\label{pic:1}
\end{figure}

\smallskip

From Claims~1 and~2, the number of orbits of $N_\sigma$ on $\Gamma_\Sigma(\sigma)$ is at most $\sum_{i=1}^r|G_\alpha:G_{\alpha,\beta_i}|=d$.
\end{proof}

We start our analysis of quasiprimitive groups of type PA by setting some notation (as usual we follow~\cite{P1}
and~\cite{P2}). 

\begin{notation}\label{not:1}
{\rm
Let $\Gamma$ be a connected $G$-vertex-transitive graph of valency $d$ and assume that $G$ is a quasiprimitive group of type PA. The socle 
$N=T_1\times \cdots\times T_l$  of $G$ is isomorphic to $T^l$, where $T$ is a 
non-abelian simple group, and $l\geq 2$. Moreover, there is a
$G$-invariant partition $\Sigma$ of $V\Gamma$ such that the action of
$G$ on $\Sigma$ is faithful and is permutationally isomorphic to the
product action of $G$ on a set $\Delta^l$. By identifying
$\Sigma$ with $\Delta^l$  we have $G\subseteq W=H\wr \Sym(l)$, where
$H\subseteq\Sym(\Delta)$ is an almost simple group with socle $T$ which is
quasiprimitive on $\Delta$, $N$ is the socle of $W$, and $W$ acts on
$\Sigma$ in product action. 
Fix $\delta$ an element of $\Delta$. We denote by
$\sigma$ the element $(\delta,\ldots,\delta)$ of $ 
\Sigma=\Delta^l$. Also, we fix $\alpha_0$ a vertex of $\Gamma$ in
$\sigma$ and let $\beta_1,\ldots,\beta_r$ be representatives of the orbits of $G_{\alpha_0}$ on $\Gamma({\alpha_0})$.  We have  $W_\sigma=H_\delta\wr\Sym(l)$ and $T_\delta$ is a proper subgroup of $T$.  

The group $G$ acts transitively on the set $\{T_1,\ldots,T_l\}$ of
minimal normal subgroups of $N$. As $G=NG_{\alpha_0}$ and $N$ acts
trivially on $\{T_1,\ldots,T_l\}$, we obtain that $G_{\alpha_0}$ acts
transitively on $\{T_1,\ldots,T_l\}$. In the sequel, we use this fact
repeatedly.

It is proved in~\cite{P1} that $N_{\alpha_0}$ is a subdirect subgroup of
$N_\sigma=T_\delta^l$, that is, $N_{\alpha_0}$ projects onto $T_\delta$ for
each of the
$l$ direct factors of $N_\sigma$. Furthermore, the subgroup $G_i:=N_G(T_i)$  has index $l$ in $G$ and $G_i$ induces a subgroup of
$\Sym(\Delta)$; this subgroup is almost simple with socle $T$ and
without loss of generality we may take $H$ to be this subgroup for each $i$. We denote by $\pi_i:G_i\to H$ the projection of $G_i$ onto $H$.}
\end{notation}

\begin{lemma}\label{lemma:proj}$\pi_i(G_i\cap G_{\alpha_0})=\pi_i(G_i\cap G_\sigma)=H_\delta$.
\end{lemma}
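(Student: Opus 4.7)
The plan is to prove both equalities at once by establishing the chain of inclusions
\[
\pi_i(G_i\cap G_{\alpha_0})\subseteq \pi_i(G_i\cap G_\sigma)\subseteq H_\delta\subseteq \pi_i(G_i\cap G_{\alpha_0}).
\]
The first inclusion is immediate since $\alpha_0\in\sigma$ forces $G_{\alpha_0}\leq G_\sigma$. For the second, unwinding the product action gives $W_\sigma=H_\delta\wr\Sym(l)$, and any $g\in G_i$ fixes the index $i$ under the top permutation, so its $i$th wreath-component (which is $\pi_i(g)$) automatically lies in $H_\delta$ whenever $g\in G_\sigma$.

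The real content is the third inclusion $H_\delta\subseteq \pi_i(G_i\cap G_{\alpha_0})$. My plan is to factor $G_i$ and then push the factorisation through $\pi_i$. Since $G$ is quasiprimitive, its socle $N$ is transitive on $V\Gamma$, giving $G=NG_{\alpha_0}$; because $N$ normalises each $T_j$ we have $N\leq G_i$, and a Dedekind-style splitting of an arbitrary element of $G_i$ as $nh$ (with $n\in N$, $h\in G_{\alpha_0}$) shows that $h=n^{-1}g$ already lies in $G_i\cap G_{\alpha_0}$, hence $G_i=N\cdot(G_i\cap G_{\alpha_0})$. Applying $\pi_i$ and using $\pi_i(G_i)=H$ together with $\pi_i(N)=T$, this yields $H=T\cdot K$ where I set $K:=\pi_i(G_i\cap G_{\alpha_0})$.

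To close the loop, I would invoke the subdirect property of $N_{\alpha_0}$ recorded in Notation~\ref{not:1}: since $N_{\alpha_0}\leq G_i\cap G_{\alpha_0}$ and $N_{\alpha_0}$ projects onto $T_\delta$ in every coordinate, we obtain $T_\delta\leq K$. Then, for any $h\in H_\delta$, the factorisation $H=TK$ lets us write $h=tk$ with $t\in T$ and $k\in K$; the relation $t=hk^{-1}$ gives $t\in H_\delta\cap T=T_\delta\leq K$, whence $h=tk\in K$. This establishes the third inclusion and hence the lemma.

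No single step is expected to be a serious obstacle; the argument is bookkeeping combining (i) the global factorisation $G=NG_{\alpha_0}$ from quasiprimitivity of $G$, (ii) the wreath-product computations $\pi_i(N)=T$ and the containment $G_i\cap G_\sigma\subseteq H_\delta\wr\Sym(l)$ read off on the fixed slot, and (iii) the subdirect hypothesis on $N_{\alpha_0}$ recalled in Notation~\ref{not:1}. The only mildly delicate point is simply that $\pi_i$, being a group homomorphism, respects products of subgroups, which is what allows the factorisation of $G_i$ to translate into a factorisation of $H$.
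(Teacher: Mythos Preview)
Your proof is correct and follows essentially the same route as the paper: both arguments establish the chain of inclusions, use the factorisation $G=NG_{\alpha_0}$ together with the modular law to obtain $G_i=N(G_i\cap G_{\alpha_0})$, apply $\pi_i$ to get $H=TK$, and then combine $T_\delta\leq K\leq H_\delta$ to conclude $K=H_\delta$. The only cosmetic difference is that the paper compresses your element-wise computation $h=tk\Rightarrow t\in H_\delta\cap T$ into a single Dedekind identity $H_\delta=H_\delta\cap(TL)=(H_\delta\cap T)L=T_\delta L=L$.
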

\begin{proof}
Since $G_{\alpha_0}\subseteq G_\sigma$, we have $\pi_i(G_i\cap G_{\alpha_0})\subseteq \pi_i(G_i\cap G_\sigma)\subseteq H_\delta$ and hence it suffices to prove that $\pi_i(G_i\cap G_{\alpha_0})=H_\delta$. Set $L=\pi_i(G_i\cap G_{\alpha_0})$.  It was proved in~\cite{P1} that $N_{\alpha_0}$ is a subdirect subgroup of $N_\sigma$ and hence
$\pi_i(N_{\alpha_0})=\pi_i(N_\sigma)=T_\delta\subseteq L$.
Since $G_{\alpha_0}\subseteq W_\sigma=H_\delta\wr\Sym(l)$, we have $L\subseteq H_\delta$. As $N$ is transitive on $V\Gamma$, we have $G=NG_{\alpha_0}$. Hence, from the modular law, we get $G_i=N(G_i\cap G_{\alpha_0})$ and, applying $\pi_i$ on both sides, we have $H=TL$. Thus $H_\delta=H_\delta\cap (TL)=(H_\delta\cap T)L=T_\delta L=L$.
\end{proof}

\begin{theorem}\label{prop:PAtype1qp}
Let $(\Gamma,G)$ be in $\mathcal{A}(d)$ with $G$ a quasiprimitive group of type PA on $V\Gamma$ (as in
Notation~\ref{not:1}). Then $l\leq d^d|T_\delta|^{2d}$. Furthermore, $(\Gamma,G)$ uniquely determines an element $(\Lambda,T)$ in $\mathcal{A}(d)$ with the stabilisers of the vertices of $\Lambda$ conjugate to $T_\delta$. 
\end{theorem}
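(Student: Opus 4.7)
My plan is to prove both assertions of the theorem simultaneously: I will first construct the graph $\Lambda$ and verify that $(\Lambda,T)\in\mathcal{A}(d)$, then apply Lemma~\ref{lemma:auxiliary} to derive the bound on $l$.

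\textbf{Generators.} Let $\beta_1,\ldots,\beta_r$ (with $r\leq d$) be representatives of the $G_{\alpha_0}$-orbits on $\Gamma(\alpha_0)$. Since $G$ is quasiprimitive the socle $N=T^l$ is vertex-transitive, so I choose $n_i\in N$ with $\alpha_0^{n_i}=\beta_i$. Connectivity of $\Gamma$, combined with $G=NG_{\alpha_0}$ and the modular law, yields $N=\langle n_1,\ldots,n_r\rangle N_{\alpha_0}$; since $N_{\alpha_0}\subseteq N_\sigma=T_\delta^l$ (by Notation~\ref{not:1}) I obtain $T^l=\langle n_1,\ldots,n_r\rangle T_\delta^l$. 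Projecting to the first coordinate (and using that $N_{\alpha_0}$ surjects onto $T_\delta$) gives $T=\langle n_1^{(1)},\ldots,n_r^{(1)}\rangle T_\delta$.

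\textbf{The graph $\Lambda$.} For $\beta\in\Gamma(\alpha_0)$ let $n_\beta\in N$ with $\alpha_0^{n_\beta}=\beta$, and set $\eta^{(\beta)}:=\sigma^{n_\beta}\in\Delta^l$. I define $\Lambda$ as the $T$-vertex-transitive graph on vertex set $\Delta$ with $\Lambda(\delta):=\{\eta^{(\beta)}_1:\beta\in\Gamma(\alpha_0)\}\setminus\{\delta\}$. The set $\Lambda(\delta)$ is $T_\delta$-invariant: for $t\in T_\delta$ the subdirectness of $N_{\alpha_0}$ in $T_\delta^l$ provides $\tilde t\in N_{\alpha_0}$ with first coordinate $t$, and then $\beta^{\tilde t}\in\Gamma(\alpha_0)$ with $\eta^{(\beta^{\tilde t})}_1=(\eta^{(\beta)}_1)^t$ (since $\tilde t\in N$ acts on blocks without permuting coordinates). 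Hence $\Lambda$ is a well-defined graph of valency $|\Lambda(\delta)|\leq|\Gamma(\alpha_0)|\leq d$, with stabiliser of $\delta$ equal to $T_\delta$; it is connected because $T=\langle n_\beta^{(1)}:\beta\in\Gamma(\alpha_0)\rangle T_\delta$. Therefore $(\Lambda,T)\in\mathcal{A}(d)$, and its uniqueness up to isomorphism will follow from $G$-transitivity on $V\Gamma$ together with $G_{\alpha_0}$-transitivity on $\{T_1,\ldots,T_l\}$: a different choice of $\alpha_0$, of base coordinate, or of orbit representatives yields an isomorphic graph.

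\textbf{Applying Lemma~\ref{lemma:auxiliary}.} With $R:=T_\delta$, I set $n^{(i)}:=n_i$ for $1\leq i\leq r$ and $n^{(i)}:=1$ for $r<i\leq d$. The generation condition $T^l=\langle n^{(1)},\ldots,n^{(d)}\rangle R^l$ is the identity established in the first paragraph. For the required decomposition $n_i=y^{(i)}m^{(i)}z^{(i)}$ with $y^{(i)},z^{(i)}\in R^l$, I pick $m^{(i)}_j$ to be a fixed representative of the double coset $T_\delta n_i^{(j)}T_\delta\subseteq T$, so that the distinct entries of $m^{(i)}$ correspond bijectively to the distinct $T_\delta$-orbits on $\Delta$ appearing among the coordinates $\{\eta^{(\beta_i)}_j:1\leq j\leq l\}$. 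Granting that this number is at most $d$ for every $i$, Lemma~\ref{lemma:auxiliary} yields $l\leq d^d|T_\delta|^{2d}$. The main obstacle is precisely this last bound: Lemma~\ref{lemma:nrorbits} alone is too weak (it controls only the $N_\sigma$-orbits on $\Gamma_\Sigma(\sigma)$, a coarser count), so I expect to exploit the diagonal $G_{\alpha_0}$-action on $\Gamma(\alpha_0)\times\{1,\ldots,l\}$, whose transitivity on the second factor forces at most $d$ orbits on the product, and then use the wreath-product identity $\eta^{(\beta^g)}_{g(j)}=(\eta^{(\beta)}_j)^{h_{g(j)}}$ together with Lemma~\ref{lemma:proj} (which constrains the $H$-components of elements of $G_i\cap G_{\alpha_0}$ projected via $\pi_i$) to conclude that each such orbit projects to a single $T_\delta$-orbit on $\Delta$.
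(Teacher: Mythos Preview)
Your construction of $\Lambda$ is essentially the paper's normal quotient $\Gamma_{M_1}$ with $M_1=T_2\times\cdots\times T_l$, just phrased coordinate-wise; that part is fine (modulo checking that your definition really gives an \emph{undirected} graph, which the normal-quotient formulation handles automatically).

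There are, however, two genuine gaps.

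\textbf{The generation claim.} The assertion $N=\langle n_1,\ldots,n_r\rangle N_{\alpha_0}$ does not follow from ``connectivity, $G=NG_{\alpha_0}$, and the modular law''. Connectivity gives $G=\langle G_{\alpha_0},n_1,\ldots,n_r\rangle$, but $\langle n_1,\ldots,n_r\rangle$ and $G_{\alpha_0}$ need not permute, so no modular-law manoeuvre yields that $\langle n_1,\ldots,n_r\rangle$ is transitive on $V\Gamma$. The paper avoids this by taking \emph{all} $n_\beta$ for $\beta\in\Gamma(\alpha_0)$ (still at most $d$ elements) and showing directly that $U=\langle n_\beta:\beta\in\Gamma(\alpha_0)\rangle$ is transitive: the $U$-orbit of $\alpha_0$ contains $\Gamma(\alpha_0)$, hence the induced subgraph on $\alpha_0^U$ has full valency, whence $\alpha_0^U=V\Gamma$.

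\textbf{The bound on distinct entries.} This is where your proposal really breaks down. Your orbit-counting argument is correct as far as it goes: $G_{\alpha_0}$ has at most $d$ orbits on $\Gamma(\alpha_0)\times\{1,\ldots,l\}$, and if $g=(h_1,\ldots,h_l)\tau\in G_{\alpha_0}$ sends $(\beta,j)$ to $(\beta',j')$ then $\eta^{(\beta')}_{j'}=(\eta^{(\beta)}_j)^{h_j}$. But $h_j$ lies in $H_\delta$, not in $T_\delta$, so each $G_{\alpha_0}$-orbit projects to a single $H_\delta$-orbit on $\Delta$, not a single $T_\delta$-orbit. Lemma~\ref{lemma:proj} does not help here: it says the projection $\pi_j(G_j\cap G_{\alpha_0})$ is \emph{onto} $H_\delta$, which if anything confirms that you cannot restrict the component $h_j$ to $T_\delta$. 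A single $H_\delta$-orbit may split into several $T_\delta$-orbits, so your count of $d$ does not bound the number of $T_\delta$-double-cosets among the $n_i^{(j)}$.

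The paper's argument proceeds differently and uses precisely the ingredient you dismiss as ``too weak''. One takes representatives $\eta^1,\ldots,\eta^s$ of the $N_\sigma$-orbits on $\Gamma_\Sigma(\sigma)$ (with $s\le d$ by Lemma~\ref{lemma:nrorbits}); since $N_\sigma=T_\delta^l$, the decomposition $\Gamma_\Sigma(\sigma)=\bigcup_i\prod_j(\delta_j^i)^{T_\delta}$ shows that the $j$th-coordinate projection is a union of $T_\delta$-orbits. Two short claims then finish the job: first, this union is $H_\delta$-invariant (here Lemma~\ref{lemma:proj} is used, but to pull $H_\delta$-translates \emph{back into} $\Gamma_\Sigma(\sigma)$), and second, the union is independent of $j$ (using transitivity of $G_\sigma$ on the coordinates). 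Only then can one replace each $\eta^i$ by an $N_\sigma$-translate with at most $s$ distinct entries and write $n_\beta=y_\beta m_{i_\beta}z_\beta^{-1}$ as required by Lemma~\ref{lemma:auxiliary}. The passage from $H_\delta$-orbits to $T_\delta$-orbits is the heart of the matter, and your sketch does not supply it.
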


\begin{proof}
Let $\eta^{i}=(\delta_1^{i},\ldots,\delta_l^{i})$, for $1\leq i\leq s$, be representatives of the orbits of $N_\sigma$ in its action on $\Gamma_\Sigma(\sigma)$. Since $N$ is transitive on $V\Gamma$ and $\Sigma$ is a system of imprimitivity for $G$, we obtain that $N_\nu$ is transitive on $\nu$ for every $\nu\in \Sigma$. So Lemma~\ref{lemma:nrorbits} applies and we have $s\leq d$. Since $N_\sigma=T_\delta\times \cdots\times T_\delta$, it follows from the definition of the $\eta^i$ that

\begin{eqnarray}\label{eq:11}
\Gamma_\Sigma(\sigma)=\bigcup_{i=1}^s(\eta^{i})^{N_\sigma}=\bigcup_{i=1}^s\left((\delta_1^{i})^{T_\delta}\times \cdots \times (\delta_l^{i})^{T_\delta}\right).
\end{eqnarray}

Now we prove two preliminary claims from which the theorem will follow. 

\smallskip

\noindent\textsc{Claim 1. }
$\bigcup_{i=1}^s(\delta_j^{i})^{H_\delta}=\bigcup_{i=1}^{s}(\delta_j^{i})^{T_\delta}$ for every $j\in \{1,\ldots,l\}$.

\noindent As $T_\delta\subseteq H_\delta$, the right hand side is contained in the left hand side. Fix $j$ in $\{1,\ldots,l\}$, $i$ in $\{1,\ldots,s\}$ and $h$ in $H_\delta$. From Lemma~\ref{lemma:proj}, we have $\pi_j(G_j\cap G_\sigma)=H_\delta$. Hence  there exists $c=(h_1,\ldots,h_l)g\in G_j\cap G_\sigma$ with $h_j=h$. Now, $(\eta^i)^c\in \Gamma_\Sigma(\sigma)$ and the $j$th coordinate of $(\eta^i)^c$ is $(\delta_j^i)^h$. So, from~$(\ref{eq:11})$, we get $(\delta_j^i)^h\in \cup_{i=1}^s(\delta_j^i)^{T_\delta}$ and Claim~$1$ follows.~$_\blacksquare$

\smallskip

\noindent\textsc{Claim 2. }
$\bigcup_{i=1}^s(\delta_j^{i})^{T_\delta}=\bigcup_{i=1}^{s}(\delta_1^{i})^{T_\delta}$ for every $j\in \{1,\ldots,l\}$.

\noindent Fix $j$ in $\{1,\ldots,l\}$. Since the left and the right hand side are $T_\delta$-invariant, it suffices to show that $\delta_1^v\in \cup_{i=1}(\delta_j^i)^{T_\delta}$ for every $v\in \{1,\ldots,s\}$. (This will imply that the right hand side is contained in the left hand side, and an analogous argument proves the reverse inclusion.) Fix $v$ in $\{1,\ldots,s\}$. Recall that $G_\sigma$ is transitive on $\{T_1,\ldots,T_l\}$. 
So, there exists $$c=(h_1,\ldots,h_l)g\in G_\sigma\subseteq H_\delta\wr \Sym(l)$$
such that $g$ conjugates $T_1$ to $T_j$, that is, $1g=j$. Then $\Gamma_\Sigma(\sigma)$ contains $\eta^v$ and hence contains

$$(\eta^v)^c=((\delta_{1g^{-1}}^{v})^{h_{1g^{-1}}},\ldots, (\delta_{lg^{-1}}^{v})^{h_{lg^{-1}}}),$$
which has $j$th entry
$(\delta_{jg^{-1}}^{v})^{h_{jg^{-1}}}=(\delta_1^{v})^{h_1}$. From~$(\ref{eq:11})$, we obtain that the $j$th entries of the elements of $\Gamma_\Sigma(\sigma)$ lie in  $\cup_{i=1}^s(\delta_j^i)^{T_\delta}$. Therefore $$(\delta_1^v)^{h_1}\in \bigcup_{i=1}^s(\delta_j^{i})^{T_\delta}.$$
Now, as $h_1^{-1}\in H_\delta$, Claim~1 yields $\delta_1^v\in \cup_{i=1}^s(\delta_j^i)^{T_\delta}$ and 
Claim~2 follows.~$_\blacksquare$

\smallskip

Claim~2 yields that the $ls$ elements $\{\delta_j^i\}_{j,i}$ lie in at most $s$ distinct $T_\delta$-orbits. In particular, since $N_\sigma=T_\delta^l$, for each $i\in \{1,\ldots,s\}$, the $N_\sigma$-orbit $(\eta^i)^{N_\sigma}$ contains an element with at most $s$ distinct entries from $\Delta$. Therefore, by replacing $\eta^i$ with a suitable element from $(\eta^i)^{N_\sigma}$ if necessary, we may assume that there are at most $s$ distinct elements of $\Delta$ among the $l$ entries of $\eta^i$.

Since $N$ is transitive on $V\Gamma$ we may choose  $m_i\in N$ such that $\sigma^{m_i}=\eta^{i}$. Moreover, since $\sigma=(\delta,\dots,\delta)$, and since $\eta^{i}$ has at most $s$ distinct entries from $\Delta$, the element $m_i$ can be chosen so that its $l$ coordinates contain at most $s$ distinct entries from $T$.

For each $\beta\in \Gamma({\alpha_0})$, let $n_\beta$ be an element of $N$ with $\beta={\alpha_0}^{n_\beta}$. Set $U=\langle n_\beta\mid \beta\in \Gamma({\alpha_0})\rangle$. Let $\overline{\Gamma}$ be the subgraph of $\Gamma$ induced on the set ${\alpha_0}^U$. We claim that $\Gamma=\overline{\Gamma}$. By the definitions of $\overline{\Gamma}$ and $U$, we have  ${\alpha_0}\in V\overline{\Gamma}$ and $\Gamma({\alpha_0})\subseteq V\overline{\Gamma}$. Therefore, since $\overline{\Gamma}$ is $U$-vertex-transitive, every vertex of $\overline{\Gamma}$ has valency $|\overline{\Gamma}({\alpha_0})|=|\Gamma({\alpha_0})|$. Since $\Gamma$ is connected, this yields $\Gamma=\overline{\Gamma}$. In particular, $U$ acts transitively on $V\Gamma$ and so $N=U N_{\alpha_0}$. As $N_{\alpha_0}$ is a subgroup of $N_\sigma$, we have $N=UN_\sigma$.

Fix $\beta$ in $\Gamma({\alpha_0})$. Since ${\alpha_0}^{n_\beta}\in \Gamma({\alpha_0})$, we get $\sigma^{n_\beta}\in \Gamma_{\Sigma}(\sigma)$ and hence $\sigma^{n_\beta}\in (\eta^{i_\beta})^{N_\sigma}$ for some $i_\beta\in \{1,\ldots, s\}$. In particular, there exists $z_\beta\in N_\sigma$ such that 
$$
\sigma^{n_{\beta}z_\beta}=\eta^{i_\beta}.
$$
Since by the definition of $m_{i_\beta}$ we have $\sigma^{m_{i_\beta}}=\eta^{i_\beta}$, we obtain $\sigma=\sigma^{n_\beta z_\beta m_{i_\beta}^{-1}}$, that is to say, $y_\beta:=n_\beta z_\beta m_{i_\beta}^{-1}\in N_\sigma$. Therefore, for every $\beta\in\Gamma({\alpha_0})$, there exists $i_\beta\in \{1,\ldots,s\}$ and $y_\beta,z_\beta\in N_\sigma$ such that 

$$
n_\beta=y_\beta m_{i_\beta}(z_\beta)^{-1}.$$
Now Lemma~\ref{lemma:auxiliary} applied to $m_1,\ldots,m_s\in N=T^l$ gives $l\leq d^d|T_\delta|^{2d}$.

Set $M_i=T_1\times \cdots \times T_{i-1}\times T_{i+1}\times \cdots \times T_l$ for $i\in\{1,\ldots,l\}$. Since $M_i$ is a maximal normal subgroup of $N$, we obtain that either $M_i$ is transitive on $V\Gamma$ or $M_i$ is $1$-closed in $N$. As $N_\alpha$ is a subdirect subgroup of $N_\sigma$, we have $N_\alpha M_i=N_\sigma M_i=T_\delta\times M_i<N$ and so $M_i$ is $1$-closed in $N$. Therefore, by Definition~\ref{def:nq}, the pair $(\Gamma_{M_i},N/M_i)$ lies in $\mathcal{A}(d)$ for each $i\in \{1,\ldots,l\}$. Furthermore, since $G$ acts transitively on $\{M_1,\ldots,M_l\}$, we obtain that $\Gamma_{M_i}\cong \Gamma_{M_j}$ for every $i,j\in \{1,\ldots,l\}$. This shows that $(\Gamma,G)$ uniquely determines the element $(\Gamma_{M_1},N/M_1)$ of $\mathcal{A}(d)$
up to isomorphism. Finally, the stabiliser in  $N/M_1$ of the vertex $\alpha^{M_1}$ of $\Gamma_{M_1}$ is $N_\alpha M_1/M_1=(T_\delta\times M_1)/M_1\cong T_\delta$ and theorem is proved.
\end{proof}

Now we are ready to prove Theorem~\ref{thm:mainqp}.

\smallskip

\noindent\emph{Proof of Theorem~\ref{thm:mainqp}. }
If $G$ is of type HA, HS, HC, TW, SD or CD, then by Corollaries~\ref{cor:many} and~\ref{cor:SD} the graph $(\Gamma,G)$ is $(dd!)!$-bounded. 

Assume that $G$ is of type AS, and set $\Lambda=\Gamma$ and $T=\soc(G)$. Clearly, $(\Lambda,T)$ is uniquely determined by $(\Gamma,G)$. Also, as $T$ is transitive on $V\Gamma$, from Theorem~\ref{thm:1} with $f_3(d)$ as in Remark~\ref{rmrm}, if $(\Lambda,T)$ is $g$-bounded, then $(\Gamma,G)$ is $f$-bounded where $f(d)=(dg(d))!$. In particular, as $f(d)\leq \widehat{g}(d)$, we get that $(\Gamma,G)$ is $\widehat{g}$-bounded. Conversely, if $(\Gamma,G)$ is $f$-bounded, then $(\Lambda,T)$ is also $f$-bounded.

Finally assume that $G$ is of type PA and let $(\Lambda,T)$ be as in Theorem~\ref{prop:PAtype1qp}. We use Notation~\ref{not:1}. From Theorem~\ref{prop:PAtype1qp}, we have $l\leq d^d|T_\lambda|^{2d}$ with $\lambda\in V\Lambda$. Assume that $(\Gamma,G)$ is $f$-bounded for some $f:\mathbb{N}\to \mathbb{N}$. As $N_\alpha$ is a subdirect subgroup of $N_\sigma\cong T_\delta^l$ and $|T_\delta|=|T_\lambda|$, we have that $|T_\lambda|\leq |N_\alpha|\leq f(d)$ and so $(\Lambda,T)$ is $f$-bounded. Finally, assume that $(\Lambda,T)$ is $g$-bounded for some $g:\mathbb{N}\to \mathbb{N}$. Now, $|N_\alpha|\leq |N_\sigma|=|T_\lambda|^l\leq g(d)^{l}\leq g(d)^{d^dg(d)^{2d}}$ and so $(\Gamma,N)$ is $f'$-bounded with $f'(d)=g(d)^{d^dg(d)^{2d}}$. Finally, Theorem~\ref{thm:1} with $f_3(d)$ as in Remark~\ref{rmrm} yields that $(\Gamma,G)$ is $f$-bounded for $f(d)=(dg(d)^{d^dg(d)^{2d}})!=\widehat{g}(d)$.~$\qed$

\section{Proof of Theorem~\ref{thm:mainbiqp}}\label{redToAS2}

Recall that a permutation group $G$ is biquasiprimitive if every non-trivial normal subgroup of $G$ has at most two orbits and $G$ does have a normal subgroup with exactly two orbits. In~\cite[Theorem~$1.1$]{P1b}, the structure of a biquasiprimitive group is described in detail and here we use~\cite{P1b} as a reference. We set some notation for the rest of the paper, which follows~\cite{P1b}.

\begin{notation}\label{not:11}{\rm 
 For a finite biquasiprimitive permutation group $G$ on $\Omega$, there is at least one non-trivial intransitive normal subgroup $N$ (since $G$ is not quasiprimitive) and $N$ must therefore have two orbits, say $\Delta,\Delta'$. Each element of $G$ either fixes set-wise these two orbits or interchanges them. Thus the elements of $G$ that fix $\Delta,\Delta'$ setwise form a subgroup $G^+$ of index $2$, and $G^+$ induces a transitive permutation group $H$ on $\Delta$. By the embedding theorem for permutation groups, $G$ is conjugate in $\Sym(\Omega)$ to a subgroup of the wreath product $H\wr \Sym(2)=(H\times H)\rtimes \Sym(2)$. The set $\Omega$ may be identified with $\Delta\times \{1,2\}$ such that, for  $(y_1,y_2)$ in the base group $H\times H$, and $(1,2)\in \Sym(2)$,
$$(\delta,i)^{(y_1,y_2)}=(\delta^{y_i},i)\quad\textrm{and}\quad (\delta,i)^{(1,2)}=(\delta,i^{(1,2)})$$ 
for all $(\delta,i)\in \Omega$. Theorem~$1.1$ in~\cite{P1b} (which we report below) defines various distinct possibilities for $\soc(G)$. 

Let $M$ be a group. For each $\varphi\in \Aut(M)$, we denote by $\Diag_\varphi(M\times M)$ the full diagonal subgroup $\{(x,x^\varphi)\mid x\in M\}$ of $M\times M$. We write $\iota_x:y\mapsto x^{-1}yx$ for the inner automorphism induced by the element $x\in M$.}
\end{notation}

Before stating Theorem~\ref{thm:cheryl} we remark that if $\Gamma$ is a $G$-vertex-biquasiprimitive graph, then $\Gamma$ is not necessarily bipartite with bipartition $\{\Delta\times \{1\},\Delta\times\{2\}\}$. Indeed, the hypothesis of $\Gamma$ being vertex-transitive does not imply that every edge of $\Gamma$ joins vertices from distinct $G^+$-orbits (but this is the case if $\Gamma$ is connected and $G$-arc-transitive). 

\begin{theorem}[{\cite[Theorem~$1.1$ and Theorem~$1.2$]{P1b}}]
\label{thm:cheryl}
Let $G$ be a  biquasiprimitive group on $\Omega$, and  
$H$  the permutation group induced by $G^+$ on $\Delta\times \{1\}$ (as in Notation~\ref{not:11}). Assume that $G_{(\alpha,1)}$ is intransitive on $\Delta\times\{2\}$ for $\alpha\in \Delta$. Replacing $G$ by a conjugate in $\Sym(\Omega)$ if necessary, $G\leq H\wr \Sym(2)$, $G\setminus G^+$ contains an element $g=(x,1)(1,2)$ for some $x\in H$, $G^+=\Diag_\varphi(H\times H)$ where $\varphi\in\Aut(H)$ and $\varphi^2=\iota_x$, and one of the following holds.
\begin{description}
\item[$(A)$]$H$ is quasiprimitive on $\Delta$. 
\item[$(B)$]$H$ is not quasiprimitive on $\Delta$; there exists an intransitive minimal normal subgroup $R$ of $H$ such that $R^\varphi\neq R$, $M=R\times R^\varphi$ is a transitive normal subgroup of $H$, and $N=\Diag_\varphi(M\times M)$ is a minimal normal subgroup of $G$; and one of:
\begin{itemize}
\item[$(i)$]$\soc(G)=N$.
\item[$(ii)$]$\soc(G)=N\times\overline{N}$, where $N,\overline{N}$ are isomorphic non-abelian minimal normal subgroups of $G$, and $\overline{N}=\Diag_\varphi(\overline{M}\times\overline{M})$; $M$, $\overline{M}$ are isomorphic regular normal subgroups of $H$, $\soc (H)=M\times \overline{M}$, and $\overline{M}=\overline{R}\times\overline{R}^\varphi$ for an intransitive minimal normal subgroup $\overline{R}$ of $H$.
\end{itemize}
\end{description}
\end{theorem}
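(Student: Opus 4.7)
The plan is to first set up the standard wreath-product embedding for a transitive permutation group with an intransitive normal subgroup. Since $N$ (the intransitive normal subgroup witnessing biquasiprimitivity) has exactly the two orbits $\Delta\times\{1\}$ and $\Delta\times\{2\}$ and $G^+$ preserves each, the action of $G^+$ on $\Delta\times\{1\}$ determines $H$, and $G^+$ embeds coordinate-wise into $H\times H$ via its two induced actions on the two orbits; the full group $G$ then embeds into $H\wr\Sym(2)$ by the standard embedding theorem.

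Next, I would use the hypothesis that $G_{(\alpha,1)}$ is intransitive on $\Delta\times\{2\}$ to deduce that $G^+$ is a full diagonal subgroup $\Diag_\varphi(H\times H)$. The reason: each projection of $G^+$ to a coordinate is $H$, and the kernel of the projection to the first coordinate is a normal subgroup of $G^+$ of the form $1\times K$ with $K\trianglelefteq H$. If $K\neq 1$, then $G_{(\alpha,1)}$ would contain $1\times K$; combining this with transitivity of $G$ on $\Omega$, one shows the corresponding orbit exhausts $\Delta\times\{2\}$, contradicting the hypothesis. Once both projections are isomorphisms, the diagonal takes the form $\{(h,h^\varphi):h\in H\}$ for some $\varphi\in\Aut(H)$. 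Choosing (after a conjugation in $\Sym(\Omega)$) an element $g=(x,1)(1,2)\in G\setminus G^+$, the condition $g^2\in G^+=\Diag_\varphi(H\times H)$ forces $\varphi^2=\iota_x$.

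For the case analysis I would split on whether $H$ is quasiprimitive on $\Delta$, yielding case $(A)$ directly. In case $(B)$, $H$ has an intransitive minimal normal subgroup $R$. The key observation is that $\varphi$ permutes the minimal normal subgroups of $H$, so $R^\varphi$ is also minimal normal. If $R^\varphi=R$, then $R$ lifts (via the diagonal) to a normal subgroup of $G$ with the same orbit structure as $R$ on each copy of $\Delta$, contradicting biquasiprimitivity of $G$ (such a lift would have more than two orbits on $\Omega$). Hence $R\neq R^\varphi$, and the $\varphi$-invariant product $M=R\times R^\varphi$ is transitive on $\Delta$ (biquasiprimitivity applied again to the diagonal lift), so $N=\Diag_\varphi(M\times M)$ is a minimal normal subgroup of $G$. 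The dichotomy $(i)/(ii)$ reflects whether $\soc(G)=N$ or whether there is a second isomorphic minimal normal subgroup $\overline{N}$; in the latter case an analogous analysis produces a second regular minimal normal subgroup $\overline{M}$ of $H$ with $\overline{M}=\overline{R}\times\overline{R}^\varphi$.

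The main obstacle will be the bookkeeping in case $(B)$: verifying that $M$ (and, when present, $\overline{M}$) is regular on $\Delta$, and that $\soc(G)$ is exactly $N$ or $N\times\overline{N}$ with no further minimal normal subgroups. This demands a systematic classification of the $\varphi$-invariant normal subgroups of $H$ (equivalently, the normal subgroups of $H\times H$ normalised by $g$), together with the constraint that their diagonal lifts have at most two orbits on $\Omega$. A secondary subtlety is ensuring that in case $(B)(ii)$ the two minimal normal subgroups $N$ and $\overline{N}$ have the asserted isomorphism type and joint regularity structure, which amounts to invoking the description of transitive groups with two distinct regular minimal normal subgroups.
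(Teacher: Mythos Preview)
The paper does not give its own proof of this theorem: it is quoted from \cite[Theorems~1.1 and~1.2]{P1b}, with Remark~\ref{rm:clearer} noting only that the arc-transitivity hypothesis in \cite[Theorem~1.2]{P1b} may be relaxed to the stated intransitivity of $G_{(\alpha,1)}$ on $\Delta\times\{2\}$. So there is nothing substantive in the paper to compare against; your sketch is in effect a reconstruction of the argument in \cite{P1b}, and its broad outline (wreath embedding, diagonal form of $G^+$, then case split on quasiprimitivity of $H$ and analysis of $\varphi$-invariant normal subgroups) is correct.

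Two small points are worth tightening. First, the relation $\varphi^2=\iota_x$ does not come from $g^2\in G^+$ (that yields $x^\varphi=x$); it comes from $g$ \emph{normalising} $G^+$: conjugating $(h,h^\varphi)$ by $g=(x,1)(1,2)$ gives $(h^\varphi,h^{\iota_x})$, and for this to lie in $\Diag_\varphi(H\times H)$ one needs $h^{\iota_x}=h^{\varphi^2}$ for all $h$. Second, in showing that $G^+$ is a full diagonal, your phrase ``combining this with transitivity of $G$ on $\Omega$'' undersells the argument. What is actually used is biquasiprimitivity: if $1\times K$ is the kernel of the first projection with $K\neq 1$, then conjugating by any $g\in G\setminus G^+$ gives $K\times 1\leq G^+$, so $K\times K$ is normal in $G$; biquasiprimitivity forces $K$ transitive on $\Delta$, whence $1\times K\leq G_{(\alpha,1)}$ is already transitive on $\Delta\times\{2\}$, contradicting the hypothesis. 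Your acknowledged ``bookkeeping'' in case~(B) (minimality of $N$, and in~(B)(ii) regularity of $M,\overline{M}$ and the exact description of $\soc(G)$) is genuine and is where most of the work in \cite{P1b} lies.
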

\begin{remark}\label{rm:clearer}{\rm 
We warn the reader that~\cite[Theorem~$1.1$]{P1b} describes the structure of any finite biquasiprimitive permutation group (that is, $G_{(\alpha,1)}$ is not necessarily assumed to be intransitive on $\Delta\times\{2\}$ for $\alpha\in \Delta$). The refinement of~\cite[Theorem~$1.1$]{P1b} in~\cite[Theorem~$1.2$]{P1b} is concerned with biquasiprimitive groups acting transitively on the vertices of a bipartite graph $\Gamma$. The statement of~\cite[Theorem~$1.2$]{P1b} requires $G$ to be arc-transitive on $\Gamma$, but the proof uses simply that $G_{(\alpha,1)}$ is intransitive on $\Delta\times \{2\}$ for $\alpha\in \Delta$. Therefore a proof of Theorem~\ref{thm:cheryl} combines~\cite[Theorem~$1.1$ and Theorem~$1.2$]{P1b} together with this remark. 
}
\end{remark}

In the rest of this section we use the subdivision into types given in Theorem~\ref{thm:cheryl} and  we prove Theorem~\ref{thm:mainbiqp}.  We start our analysis with the easiest example.

\begin{lemma}\label{lemma:silly}
Let $(\Gamma,G)\in\mathcal{A}(d)$ with $G$ biquasiprimitive on $V\Gamma$ and $G_{(\alpha,1)}$ transitive on $\Delta\times\{2\}$ for $\alpha\in \Delta$. Then $|G_{(\alpha,i)}|\leq d!(d-1)!$ for every $\alpha\in \Delta$ and $i=1,2$.
\end{lemma}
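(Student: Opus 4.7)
The plan is to combine the transitivity hypothesis on $G_{(\alpha,1)}$ with connectivity of $\Gamma$ to force $|\Delta|\le d$, and then to bound $|G_{(\alpha,1)}|$ by its obvious faithful permutation action on $V\Gamma\setminus\{(\alpha,1)\}$.

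First I would observe that $G_{(\alpha,1)}\le G^+$: every element of $G\setminus G^+$ interchanges the two $G^+$-orbits $\Delta\times\{1\}$ and $\Delta\times\{2\}$, and so cannot fix the vertex $(\alpha,1)$. In particular $G_{(\alpha,1)}$ preserves the partition $\{\Delta\times\{1\},\Delta\times\{2\}\}$ of $V\Gamma$.

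Next I would show that $|\Delta|\le d$. The set $\Gamma((\alpha,1))\cap(\Delta\times\{2\})$ is $G_{(\alpha,1)}$-invariant inside $\Delta\times\{2\}$, and by hypothesis $G_{(\alpha,1)}$ is transitive on $\Delta\times\{2\}$; hence this set is either empty or equal to all of $\Delta\times\{2\}$. If it were empty, then by $G^+$-vertex-transitivity on $\Delta\times\{1\}$ no vertex of $\Delta\times\{1\}$ would have a neighbour in $\Delta\times\{2\}$, contradicting the connectivity of $\Gamma$ (as both $G^+$-orbits are non-empty and $G$ interchanges them). Therefore $\Delta\times\{2\}\subseteq\Gamma((\alpha,1))$, and so $|\Delta|\le|\Gamma((\alpha,1))|\le d$.

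Finally, $G_{(\alpha,1)}$ acts faithfully on $V\Gamma\setminus\{(\alpha,1)\}$ since $G\le\Aut(\Gamma)$, and this action leaves the two parts $(\Delta\times\{1\})\setminus\{(\alpha,1)\}$ and $\Delta\times\{2\}$ invariant; these have sizes at most $d-1$ and $d$ respectively by the previous paragraph. Hence $G_{(\alpha,1)}$ embeds into $\Sym((\Delta\times\{1\})\setminus\{(\alpha,1)\})\times\Sym(\Delta\times\{2\})$, yielding
\[
|G_{(\alpha,1)}|\le(|\Delta|-1)!\,|\Delta|!\le(d-1)!\,d!.
\]
The identical bound for $|G_{(\alpha,2)}|$ follows from $G$-vertex-transitivity, since all point stabilisers of a transitive action are conjugate and thus have the same order. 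The only genuinely non-mechanical step is ruling out, via connectivity, the possibility that $(\alpha,1)$ has no neighbour in $\Delta\times\{2\}$; everything else reduces to a direct counting argument, so I expect no serious obstacle.
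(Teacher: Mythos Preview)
Your proof is correct and follows essentially the same approach as the paper's: establish that $\Delta\times\{2\}\subseteq\Gamma((\alpha,1))$ using connectivity and the transitivity hypothesis, deduce $|\Delta|\le d$, and then bound $|G_{(\alpha,1)}|$ via its faithful action on the two blocks. Your write-up is simply more explicit than the paper's (which compresses the connectivity step and the final embedding into $\Sym(d-1)\times\Sym(d)$ to a single sentence).
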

\begin{proof}
Fix $\alpha$ in $\Delta$. As $\Gamma$ is connected, $(\alpha,1)$ has a neighbour, $(\alpha',2)$ say, in $\Delta\times\{2\}$. Since $G_{(\alpha,1)}$ is transitive on $\Delta\times \{2\}$ and $G\leq \Aut(\Gamma)$, we obtain $\Gamma((\alpha,1))\supseteq \Delta\times \{2\}$, $d\geq |\Delta|$ and $|G_{(\alpha,1)}|\leq d!(d-1)!$.
\end{proof}

From now on, we may assume that $G_{(\alpha,1)}$ is intransitive on $\Delta\times\{2\}$ and so Theorem~\ref{thm:cheryl} applies.

\begin{notation}\label{not:distance2}{\rm
Fix $(\Gamma,G)$ in $\mathcal{A}(d)$ with $G$  biquasiprimitive on $V\Gamma$ and $G_{(\delta,1)}$ intransitive on $\Delta\times \{2\}$ for $\delta\in\Delta$ (as in Notation~\ref{not:11}). We denote by $\Gamma^{\Delta,1}$ (respectively, $\Gamma^{\Delta,2}$) the graph whose vertices are the elements of  $\Delta$, with an edge between two distinct elements $\delta_1$ and $\delta_2$  of $\Delta$ if and only if the distance of $(\delta_1,1)$ from $(\delta_2,1)$  (respectively, $(\delta_1,2)$ from $(\delta_2,2)$) in $\Gamma$ is at most $2$.

Since $\Gamma$ is connected of valency $d$, the graph $\Gamma^{\Delta,i}$ is connected of valency at most $d(d-1)$ for each $i\in\{1,2\}$. The subgroup $G^+=\Diag_\varphi(H\times H)$ of $G$ that fixes setwise $\Delta\times\{1\}$ and $\Delta\times\{2\}$ acts transitively on $\Delta\times\{i\}$ and the action of $G^+$ on $\Delta\times\{i\}$ is equivalent to the action of $H$ on $\Delta$ and preserves the edge set of $\Gamma^{\Delta,i}$ for each $i\in\{1,2\}$. Therefore $\Gamma^{\Delta,i}$ is an $H$-vertex-transitive graph of valency at most $d(d-1)$, that is, $(\Gamma^{\Delta,i},H)\in \mathcal{A}(d(d-1))$ for each $i\in \{1,2\}$. As $g=(x,1)(1,2)\in G$ and $(\Delta\times\{1\})^g=\Delta\times\{2\}$, we have $\Gamma^{\Delta,1}\cong\Gamma^{\Delta,2}$. Hence, without risk of ambiguity, we write simply $\Gamma^\Delta$ for $\Gamma^{\Delta,1}$.

The subgroup $G^+$ is the unique subgroup of $G$ of index $2$ except for the case where $|V\Gamma|=4$ and $G=Z_2\times Z_2$ (see~\cite[Remarks~$1.1$~$(1)$]{P1b}), in which case Theorem~\ref{thm:mainbiqp} is obvious. Therefore, in the remainder of the section, we assume that $G^+$ is the unique subgroup of index $2$ in $G$ and hence $\Delta$ is uniquely determined by $G$.

Summing up, the element $(\Gamma,G)$ of $\mathcal{A}(d)$ uniquely determines the element $(\Gamma^\Delta,H)$ of $\mathcal{A}(d(d-1))$ with $G_{(\alpha,i)}\cong H_\alpha$ for every $\alpha\in \Delta$ and for every $i\in\{1,2\}$.}
\end{notation}

\begin{lemma}\label{tedious}
Let $(\Gamma,G)\in \mathcal{A}(d)$ and $(\Gamma^\Delta,H)\in \mathcal{A}(d(d-1))$ as in Notation~\ref{not:distance2}. If $(\Gamma^\Delta,H)$ is $f$-bounded, then $(\Gamma,G)$ is $f'$-bounded where $f'(d)=f(d(d-1))$, and if $(\Gamma,G)$ is $f$-bounded, then $(\Gamma^\Delta,H)$ is $\tilde{f}$-bounded where $\tilde{f}$ is as in Definition~\ref{functions}. 
\end{lemma}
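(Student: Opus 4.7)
The plan is to observe that the entire content of the lemma is already encapsulated in the last sentence of Notation~\ref{not:distance2}, namely that $(\Gamma,G)\in\mathcal{A}(d)$ uniquely determines $(\Gamma^\Delta,H)\in\mathcal{A}(d(d-1))$ with the stabiliser isomorphisms $G_{(\alpha,i)}\cong H_\alpha$ for every $\alpha\in\Delta$ and $i\in\{1,2\}$. Once this isomorphism is in hand, both directions of the lemma reduce to a definition-chase plus a small arithmetic verification.

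For the first implication, I would assume that $(\Gamma^\Delta,H)$ is $f$-bounded. Since $(\Gamma^\Delta,H)\in\mathcal{A}(d(d-1))$, the definition of $f$-boundedness directly gives $|H_\alpha|\leq f(d(d-1))$ for every $\alpha\in\Delta$. Applying $G_{(\alpha,i)}\cong H_\alpha$ yields $|G_{(\alpha,i)}|\leq f(d(d-1))=f'(d)$, and since every vertex of $\Gamma$ is of the form $(\alpha,i)$, we conclude that $(\Gamma,G)$ is $f'$-bounded.

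For the converse, assume $(\Gamma,G)$ is $f$-bounded, so that $|G_{(\alpha,i)}|\leq f(d)$ and hence $|H_\alpha|\leq f(d)$ for every $\alpha\in\Delta$. I need to show $|H_\alpha|\leq\tilde{f}(d(d-1))$. Unpacking Definition~\ref{functions}, $\tilde{f}(d(d-1))=f(d')$ where $d'$ is the unique positive integer with $(d'-1)(d'-2)<d(d-1)\leq d'(d'-1)$. The key arithmetic observation is that the map $x\mapsto x(x-1)$ is strictly increasing on $\mathbb{N}$, and the choice $d'=d$ works: the right-hand inequality holds as equality, while $(d-1)(d-2)=d^2-3d+2<d^2-d=d(d-1)$ holds for $d\geq 2$. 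Therefore $\tilde{f}(d(d-1))=f(d)$, and the bound $|H_\alpha|\leq f(d)=\tilde{f}(d(d-1))$ shows that $(\Gamma^\Delta,H)$ is $\tilde{f}$-bounded.

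There is no real obstacle here; the substance of the lemma is entirely contained in the stabiliser isomorphism established in Notation~\ref{not:distance2}, and the only thing to check carefully is that the piecewise inverse $d\mapsto d'$ implicit in the definition of $\tilde{f}$ truly sends $d(d-1)$ to $d$, which is the routine monotonicity check above.
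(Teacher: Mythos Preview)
Your proof is correct and follows essentially the same approach as the paper's own proof: both rely on the stabiliser isomorphism $G_{(\alpha,i)}\cong H_\alpha$ recorded in Notation~\ref{not:distance2} and then unwind the definitions. Your version is slightly more explicit in that you verify the arithmetic identity $\tilde{f}(d(d-1))=f(d)$ by checking that $d'=d$ satisfies the defining inequalities, whereas the paper simply asserts this equality.
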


\begin{proof}We have $|G_{(\alpha,i)}|=|H_\alpha|$ for every $\alpha\in\Delta$ and $i\in \{1,2\}$. As $(\Gamma,G)\in \mathcal{A}(d)$ and $(\Gamma^\Delta,H)\in \mathcal{A}(d(d-1))$, from Definition~\ref{def:Ad}, we have to show that $|G_{(\alpha,i)}|$ is bounded above by a function of $d$ if and only if $|H_\alpha|$ is bounded above by a function of $d(d-1)$. If $(\Gamma^\Delta,H)$ is $f$-bounded, then $|H_\alpha|\leq f(d')$ where $d'=d(d-1)$ and so $|G_{(\alpha,i)}|\leq f'(d)$ where $f'(d)=f(d(d-1))$.
Conversely, if $(\Gamma,G)$ is $f$-bounded, then $|G_{(\alpha,i)}|\leq f(d)$ and so $|H_\alpha|\leq f(d)=\tilde{f}(d(d-1))$.
\end{proof}

\begin{theorem}\label{thm:Bii}
Let $(\Gamma,G)$ be in $\mathcal{A}(d)$ with $G$ biquasiprimitive of type~$(A)$ or~$(B)$~$(ii)$ on $V\Gamma$ (notation as in Theorem~\ref{thm:cheryl}). Then Theorem~\ref{thm:mainbiqp} holds for $(\Gamma,G)$.
\end{theorem}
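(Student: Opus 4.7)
The plan is to handle types~(A) and (B)(ii) separately, each reducing to a result already in hand. In type~(B)(ii), a large semiregular normal subgroup of $G$ will give conclusion~(1) of Theorem~\ref{thm:mainbiqp} directly via Theorem~\ref{thm:1}. In type~(A), since $H$ is quasiprimitive on $\Delta$, I will apply Theorem~\ref{thm:mainqp} to the pair $(\Gamma^\Delta,H)\in\mathcal{A}(d(d-1))$ constructed in Notation~\ref{not:distance2}, and transfer the outcome back to $(\Gamma,G)$ using Lemma~\ref{tedious}.

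For type~(B)(ii), the minimal normal subgroup $N=\Diag_\varphi(M\times M)$ of $G$ is contained in $G^+$, so it preserves each of $\Delta\times\{1\}$ and $\Delta\times\{2\}$. On $\Delta\times\{1\}$ it acts as $M$ on $\Delta$, and on $\Delta\times\{2\}$ as $M^\varphi$; a short calculation using $M=R\times R^\varphi$, $\varphi^2=\iota_x$, and the normality of $R$ in $H$ shows $M^\varphi=M$, and since $M$ is a regular normal subgroup of $H$, the subgroup $N$ is regular on each of its two orbits. Therefore $N_\alpha=1$ for every $\alpha\in V\Gamma$, so $(\Gamma,N)$ is $(2,1)$-bounded in the sense of Definition~\ref{Nbdd}. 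Applying Theorem~\ref{thm:1} to $N\trianglelefteq G$ (with $f_3$ as in Remark~\ref{rmrm}) yields $|G_\alpha|\leq d\cdot(4d)!$, which is easily dominated by $(d^2((d(d-1))!)^2)!$, and conclusion~(1) of Theorem~\ref{thm:mainbiqp} follows.

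For type~(A), the projection $\Diag_\varphi(H\times H)\to H$ onto the first coordinate is an isomorphism, which identifies the socle factor $T$ of $G$ with that of $H$. Applying Theorem~\ref{thm:mainqp} to $(\Gamma^\Delta,H)\in\mathcal{A}(d_0)$ with $d_0=d(d-1)$, two subcases arise. If conclusion~(1) of Theorem~\ref{thm:mainqp} holds, then $(\Gamma^\Delta,H)$ is $(d_0 d_0!)!$-bounded, and Lemma~\ref{tedious} transfers this to $(\Gamma,G)$, giving conclusion~(1) of Theorem~\ref{thm:mainbiqp} since $d_0 d_0!\leq d^2((d(d-1))!)^2$. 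If conclusion~(2) holds, it yields a pair $(\Lambda,T)\in\mathcal{A}(d_0)$ uniquely determined by $(\Gamma^\Delta,H)$ and hence by $(\Gamma,G)$; I take $\Lambda_1=\Lambda_2=\Lambda$ to obtain the two (isomorphic) pairs required by conclusion~(2) of Theorem~\ref{thm:mainbiqp}.

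The remaining step is the bookkeeping of the bounds in conclusion~(2). If $(\Lambda_i,T)$ is $g_i$-bounded for $i=1,2$, then $(\Lambda,T)$ is $\min\{g_1,g_2\}$-bounded, so by Theorem~\ref{thm:mainqp}(2) and Lemma~\ref{tedious}, $(\Gamma,G)$ is $\overline{g_1\ast g_2}$-bounded; this reduces to the elementary inequality $\min\{g_1,g_2\}(d_0)\leq g_1(d_0)g_2(d_0)$ appearing inside an exponent. Conversely, if $(\Gamma,G)$ is $f$-bounded, Lemma~\ref{tedious} gives $\tilde f$-boundedness of $(\Gamma^\Delta,H)$, and Theorem~\ref{thm:mainqp}(2) then yields $\tilde f$-boundedness of $(\Lambda,T)=(\Lambda_i,T)$. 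The main delicate points I expect are the identification of socle factors between $G$ and $H$ in type~(A), and the fiddly function comparisons among $\widehat g$, $\tilde f$, and $\overline{g_1\ast g_2}$; beyond these, the argument is a clean reduction to Theorems~\ref{thm:1} and~\ref{thm:mainqp}.
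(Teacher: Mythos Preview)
Your argument is correct, and for type~(A) it is essentially identical to the paper's proof: both apply Theorem~\ref{thm:mainqp} to $(\Gamma^\Delta,H)\in\mathcal{A}(d_0)$ and use Lemma~\ref{tedious} to transfer the two alternatives back to $(\Gamma,G)$, with $\Lambda_1=\Lambda_2=\Lambda$ in case~(2).

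For type~(B)(ii) you take a genuinely different, and in fact more direct, route. The paper passes to the distance-$2$ graph $\Gamma^\Delta$ and applies Corollary~\ref{cor:good} to the regular normal subgroup $M\trianglelefteq H$ acting on $\Delta$, obtaining $|H_\alpha|\leq (d(d-1))!$ and hence the same bound on $|G_{(\alpha,i)}|$. You instead stay with $\Gamma$ and observe that $N=\Diag_\varphi(M\times M)\trianglelefteq G$ is semiregular with two orbits (your verification that $M^\varphi=M$ is the key point here), so that Theorem~\ref{thm:1} with $(f_1,f_2)=(2,1)$ applies directly. Your route avoids introducing $\Gamma^\Delta$ at the cost of a slightly looser explicit constant, $d\cdot(4d)!$ versus $(d(d-1))!$; both sit comfortably inside the target bound $(d^2((d(d-1))!)^2)!$, so nothing is lost. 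The paper's route has the minor advantage of reusing the same $\Gamma^\Delta$ framework needed anyway for type~(A), while yours has the advantage of being a one-step application of Theorem~\ref{thm:1} to $G$ itself.
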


\begin{proof}
Assume that  $G$ is of type~$(A)$, that is, $H$ is quasiprimitive on $\Delta$. In particular, Theorem~\ref{thm:mainqp} applies to $(\Gamma^\Delta,H)$. If Part~$(1)$ of Theorem~\ref{thm:mainqp} holds for $(\Gamma^\Delta,H)$, then from Notation~\ref{not:distance2} and Lemma~\ref{tedious} we obtain that $(\Gamma,G)$ is $f$-bounded with $f(d)=(d(d-1)(d(d-1))!)!$ and Part~$(1)$ of Theorem~\ref{thm:mainbiqp} holds for $(\Gamma,G)$. Assume that Part~$(2)$ of Theorem~\ref{thm:mainqp} holds for $(\Gamma^\Delta,H)$ and let $(\Lambda,T)$ be as in Theorem~\ref{thm:mainqp}~$(2)$. We show that Part~$(2)$ of Theorem~\ref{thm:mainbiqp} holds by taking $\Lambda_1=\Lambda_2=\Lambda$. If $(\Lambda_i,T)$ is $g_i$-bounded, then by Theorem~\ref{thm:mainqp}, for each $i\in\{1,2\}$,
we have that $(\Gamma^\Delta,H)$ is $\widehat{g_i}$-bounded and hence from Notation~\ref{not:distance2} and Lemma~\ref{tedious}, $(\Gamma,G)$ is $f$-bounded with $f(d)=\widehat{g_i}(d_0)$ and $d_0=d(d-1)$. In particular, $(\Gamma,G)$ is $\overline{g_1\ast g_2}$-bounded. Conversely, assume that $(\Gamma,G)$ is 
 $f$-bounded. By Notation~\ref{not:distance2}, $|H_\alpha|=|G_{(\alpha,1)}|\leq f(d)$ and $(\Gamma^\Delta,H)\in \mathcal{A}(d(d-1))$. So $(\Gamma^\Delta,H)$ is $\tilde{f}$-bounded. Now Theorem~\ref{thm:mainqp}~$(2)$ yields that $(\Lambda,T)$ is $\tilde{f}$-bounded. 

Assume now that $G$ is of type~$(B)$~$(ii)$. From Theorem~\ref{thm:cheryl}, $M$ is a regular normal subgroup of $H$. Therefore from Corollary~\ref{cor:good} applied to $M$, $H$ and $\Gamma^\Delta$, we have $|H_\alpha|\leq (d(d-1))!$ for every $\alpha\in \Delta$. Hence $(\Gamma,f)$ is $f$-bounded with $f(d)=(d(d-1))!$ and Theorem~\ref{thm:mainbiqp}~$(1)$ holds for $(\Gamma,G)$.
\end{proof}

\begin{remark}\label{rm:2qp}
{\rm It is worth pointing out here that Theorem~\ref{thm:Bii} together with Remark~\ref{rm:1qp} show that there is a function $f:\mathbb{N}\to \mathbb{N}$ such that, if $G$ is of type $(A)$ (with $H$ a quasiprimitive group of type HA, HS, HC, TW, SD or CD) or if $G$ is of type~$(B)$~$(ii)$, then $(\Gamma,G)$ is $f$-bounded. Furthermore a suitable function $f$ can be explicitly determined from Corollaries~\ref{cor:many} and~\ref{cor:SD} and Theorem~\ref{thm:Bii}.} 
\end{remark}

Throughout the remainder this section we assume Notations~\ref{not:11} and~\ref{not:distance2} and we fix $(\Gamma,G)$ in $\mathcal{A}(d)$ with $G$ biquasiprimitive of type~$(B)~(i)$ on $V\Gamma$ (notation as in Theorem~\ref{thm:cheryl}). Write $S=R^\varphi$ and recall that $R$ and $S$ are intransitive minimal normal subgroups of $H$. Furthermore $\soc(H)=M=R\times S$ is transitive on $\Delta$.

\begin{lemma}\label{lemma:Rabelian}
If $R$ is abelian, then $(\Gamma,G)$ is $f$-bounded with $f(d)=(d(d-1))!$.
\end{lemma}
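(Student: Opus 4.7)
The plan is to reduce to an application of Corollary~\ref{cor:good} on the auxiliary graph $\Gamma^\Delta$ introduced in Notation~\ref{not:distance2}. The key observation is that the hypothesis ``$R$ abelian'' forces the socle $M = R\times S$ of $H$ to be a \emph{regular} normal subgroup of $H$ on $\Delta$.

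First I would note that $S=R^\varphi$ is abelian, since $\varphi\in\Aut(H)$ and $R$ is abelian; therefore $M = R\times S$ is an abelian normal subgroup of $H$. By Theorem~\ref{thm:cheryl}~$(B)$, $M$ is transitive on $\Delta$, and a transitive abelian permutation group is automatically regular (stabilisers in an abelian transitive group are trivial because they are normalised by the whole group, which is transitive). Hence $M$ acts regularly on $\Delta$.

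Next I would pass to the auxiliary graph $(\Gamma^\Delta,H)$ from Notation~\ref{not:distance2}, which lies in $\mathcal{A}(d(d-1))$. Since $M$ is a regular normal subgroup of $H$ acting on the vertex set $\Delta$ of $\Gamma^\Delta$, Corollary~\ref{cor:good} applied to $(\Gamma^\Delta,H)$ (with valency bound $d(d-1)$) gives
\[
|H_\alpha| \leq (d(d-1))!\quad\text{for every }\alpha\in\Delta.
\]
Finally, by Notation~\ref{not:distance2} we have $G_{(\alpha,i)}\cong H_\alpha$ for every $\alpha\in\Delta$ and $i\in\{1,2\}$, so $|G_{(\alpha,i)}|\leq (d(d-1))!$ for every vertex $(\alpha,i)$ of $\Gamma$. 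Thus $(\Gamma,G)$ is $f$-bounded with $f(d)=(d(d-1))!$.

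There is no real obstacle here: the entire argument is already packaged by Corollary~\ref{cor:good} together with the translation between $(\Gamma,G)$ and $(\Gamma^\Delta,H)$; the only substantive content is the elementary remark that an abelian transitive group is regular, which promotes $M$ from merely transitive (as in Theorem~\ref{thm:cheryl}~$(B)$) to regular, matching the hypothesis of Corollary~\ref{cor:good}. This is exactly the same pattern used for type $(B)(ii)$ in Theorem~\ref{thm:Bii}, except that regularity of $M$ here comes from abelianness rather than from the statement of Theorem~\ref{thm:cheryl}.
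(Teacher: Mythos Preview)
Your proof is correct and follows essentially the same approach as the paper's own proof: both observe that $R$ abelian forces $M=R\times S$ to be abelian, hence regular on $\Delta$, and then apply Corollary~\ref{cor:good} to $(\Gamma^\Delta,H)\in\mathcal{A}(d(d-1))$ to bound $|H_\alpha|$, translating back to $(\Gamma,G)$ via Notation~\ref{not:distance2}. If anything, you are slightly more explicit than the paper, which applies Corollary~\ref{cor:good} directly without spelling out that abelian transitive implies regular.
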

\begin{proof}
If $R$ is abelian, then  so is $S$ and  $M$. In particular, $M$ is an abelian normal transitive subgroup of $H$. Therefore from Corollary~\ref{cor:good} applied to $M$, $H$ and $\Gamma^\Delta$, we have $|H_\alpha|\leq (d(d-1))!$. Hence $(\Gamma,G)$ is  $f$-bounded with $f(d)=(d(d-1))!$.
\end{proof}

\begin{notation}\label{not:3}
{\rm
Given Lemma~\ref{lemma:Rabelian}, from now on we may assume that $R$ is the direct product of $l$ isomorphic non-abelian simple groups, each isomorphic to $T$ say. Write $R=T_{R,1}\times\cdots\times T_{R,l}$ and $S=T_{S,1}\times \cdots\times T_{S,l}$ with $T_{R,j}\cong T_{S,j}\cong T$ for every $j\in \{1,\ldots,l\}$. Since $R$ and $S$ are minimal normal subgroups of $H$, the group $H$ permutes transitively the sets $\{T_{R,j}\}_j$ and $\{T_{S,j}\}_j$. Let $\alpha \in \Delta$ and denote by $\pi_{R,j,\alpha}:M_\alpha\to T_{R,j}$ the natural projection on the $j$th coordinate of $R$. Similarly, denote by $\pi_{S,j,\alpha}:M_\alpha\to T_{S,j}$ the natural projection on the $j$th coordinate of $S$.}
\end{notation}

\begin{lemma}\label{lemma:projRSsurj}
If $\pi_{R,j,\alpha}$ or $\pi_{S,j,\alpha}$ is surjective for some $j\in \{1,\ldots,l\}$ and for some $\alpha\in \Delta$, then $(\Gamma,G)$ is $f$-bounded with $f(d)=(d^2((d(d-1))!)^2)!$.
\end{lemma}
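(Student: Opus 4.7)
My plan is to reduce the problem to bounding $|M_\alpha|$ (where $M = \soc(H) = R \times S$), and then to invoke Theorem~\ref{thm:1} together with Notation~\ref{not:distance2}. By Notation~\ref{not:distance2}, $|G_{(\alpha,i)}| = |H_\alpha|$, and $(\Gamma^\Delta, H) \in \mathcal{A}(d(d-1))$. Since $M \trianglelefteq H$ is transitive on $\Delta$ (by Theorem~\ref{thm:cheryl}(B)(i)), Theorem~\ref{thm:1} (with $f_1 = 1$) transfers any bound $|M_\alpha| \leq f_2(d(d-1))$ to $|H_\alpha| \leq (d(d-1) f_2(d(d-1)))!$. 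Comparing with the target $(d^2((d(d-1))!)^2)!$ and using $d(d-1) \leq d^2$, it suffices to establish $|M_\alpha| \leq ((d(d-1))!)^2$.

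Without loss of generality assume $\pi_{R,j,\alpha}$ is surjective; the case where $\pi_{S,j,\alpha}$ is surjective will be handled symmetrically via the automorphism $\varphi$, which exchanges $R$ and $S$. Setting $K = \hat{R}_j \times S$, surjectivity of $\pi_{R,j,\alpha}$ immediately gives $M_\alpha K = M$ (since $M/K \cong T_{R,j}$ and $M_\alpha$ covers this quotient), so $K$ is transitive on $\Delta$ with $|K_\alpha| = |M_\alpha|/|T_{R,j}|$. Now $T_{R,j}$ and $K$ commute as complementary direct factors of $M$, so $T_{R,j}$ lies in the centraliser of the transitive subgroup $K$ of $\Sym(\Delta)$. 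By Wielandt's classical theorem, the centraliser of a transitive permutation group is semiregular, so $T_{R,j}$ acts semiregularly on $\Delta$; by $H$-conjugation every $T_{R,k}$ is semiregular on $\Delta$, and the analogous argument (with $S$ replacing $R$) shows that every $T_{S,k}$ is semiregular too.

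With semiregularity in hand, I plan to pass to the block system formed by the $T_{R,j}$-orbits: since $T_{R,j} \trianglelefteq M$ this system is $M$-invariant, the faithful quotient action of $M/T_{R,j} \cong K$ on $\Delta/T_{R,j}$ lives inside a graph $(\Gamma^\Delta)_{T_{R,j}}$ of valency at most $d(d-1)$, and in this quotient one of the semiregular direct factors of $K$ becomes \emph{regular} on a suitable further quotient, allowing Corollary~\ref{cor:good} to produce a factorial bound of $(d(d-1))!$. A parallel reduction using the $S$-side (via $\varphi$) yields a second, independent factor of $(d(d-1))!$, and the two combine multiplicatively to give $|M_\alpha| \leq ((d(d-1))!)^2$.

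The main obstacle I anticipate is coordinating the two parallel reductions (one through $R$ and one through $S$): one must check that the block systems produced by the two semiregular actions interact so as to yield a genuinely \emph{squared} factorial rather than a nested one, and that the valency bound $d(d-1)$ is preserved through each quotienting step. The specific structure provided by Theorem~\ref{thm:cheryl}(B)(i) --- in particular, the direct-product decomposition $M = R \times S$ of the socle of $H$ and the exchanging automorphism $\varphi$ --- should be exactly what makes this symmetric bookkeeping possible.
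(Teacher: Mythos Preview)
Your reduction to bounding $|M_\alpha|$ and your semiregularity argument for $T_{R,j}$ are both correct and pleasant. The gap is in what follows. You write that ``the analogous argument (with $S$ replacing $R$) shows that every $T_{S,k}$ is semiregular too,'' but that argument needs $\pi_{S,k,\alpha}$ to be surjective for some $k$, which is \emph{not} part of the hypothesis: you only know surjectivity on the $R$-side. The paper explicitly splits off the case where no $\pi_{S,k,\alpha}$ is surjective and handles it by a composition-factor argument showing $M_\alpha=R_\alpha\times S_\alpha$. Your appeal to $\varphi$ does not rescue this: $\varphi$ is an outer automorphism of $H$ (since $R^\varphi=S\neq R$) and does not act on $\Delta$, so it cannot be used to transport the semiregularity conclusion from $R$-factors to $S$-factors within the $H$-action on $\Delta$.

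Even granting semiregularity of every $T_{R,k}$, your quotient plan does not bound $|M_\alpha|$. Because $(T_{R,j})_\alpha=1$, the stabiliser in the normal quotient by $T_{R,j}$ satisfies $|M_\alpha T_{R,j}/T_{R,j}|=|M_\alpha|$, so passing to $\Delta/T_{R,j}$ leaves the stabiliser size unchanged; iterating gains nothing. Nor does any factor of $K$ become regular on a further quotient in general: for instance $S$ acts on $\Delta/R$ with stabiliser $\pi_S(M_\alpha)$, which need not be trivial. The paper's route is genuinely different: it first shows (Claim~1) that a nontrivial $L_\alpha$ with $L\trianglelefteq H$ must act nontrivially on $\Gamma^\Delta(\alpha)$; then, after arranging $M_\alpha=R_\alpha\times S_\alpha$ with $R_\alpha$ minimal normal in $H_\alpha$ (via Scott's Lemma or composition factors), it gets $|R_\alpha|\le(d(d-1))!$ from faithfulness on $\Gamma^\Delta(\alpha)$. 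The second factorial comes not from a parallel argument inside $H$ but from the element $g=(x,1)(1,2)\in G\setminus G^+$, which conjugates $\widehat{R}$ to $\widehat{S}$ and, together with a single edge of $\Gamma$ crossing the bipartition, yields $|S_\alpha|\le d\,(d(d-1))!$. This use of the ambient biquasiprimitive structure (not just $H$) is the missing idea in your sketch.
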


\begin{proof}
We prove three claims from which the lemma will follow.

\smallskip

\noindent\textsc{Claim~1. }Let $L$ be normal in $H$. If $L_{\alpha}\neq 1$, then $L_\alpha^{\Gamma^\Delta(\alpha)}\neq 1$. 

\noindent Since $H$ is transitive on $\Delta$, $L$ is normal in $H$ and $L_{\alpha}\neq 1$, we have $L_{\alpha'}\neq 1$ for every $\alpha'\in \Delta$. Fix $\alpha$ in $\Delta$. As $\Gamma^\Delta$ is connected, there exists a vertex $\alpha'$ of $\Gamma^\Delta$ such that $L_\alpha$ fixes $\alpha'$ and $L_\alpha^{\Gamma^\Delta(\alpha')}\neq 1$. Replacing $\alpha'$ if necessary, we may assume that $\alpha'$ is chosen so that its distance $r$ from $\alpha$ is minimal. As $L_{\alpha}\leq L_{\alpha'}$ and $\alpha$, $\alpha'$ are conjugate under $H$, we obtain $L_{\alpha}=L_{\alpha'}$. By minimality of $r$, this yields $r=0$, $\alpha'=\alpha$ and 
$L_{\alpha}^{\Gamma^\Delta(\alpha)}=L_{\alpha}^{\Gamma^\Delta(\alpha')}\neq 1$, and the claim is proved.~$_\blacksquare$

\smallskip
 
\noindent\textsc{Claim~$2$. }If $M_\alpha=R_\alpha\times S_\alpha$, $R_\alpha\neq 1$ and $R_\alpha$ 
is a minimal normal subgroup of $H_\alpha$, then $(\Gamma,G)$ is $f$-bounded with $f(d)=(d^2((d(d-1))!)^2)!$.

\noindent As $R_\alpha\neq 1$, from Claim~$1$ we have that $R_\alpha$ acts non-trivially on 
$\Gamma^\Delta(\alpha)$. As $R_{\alpha}$ is a minimal normal subgroup of 
$H_\alpha$, we obtain that $R_{\alpha}$ acts faithfully on $\Gamma^\Delta(\alpha)$ and 
hence $|R_{\alpha}|\leq |\Gamma^\Delta(\alpha)|!\leq (d(d-1))!$ for every $\alpha\in \Delta$. 

Write $\widehat{R}=\{(r,r^\varphi)\mid r\in R\}\subseteq G^+$ and $\widehat{S}=\{(s,s^\varphi)\mid s\in S\}\subseteq G^+$. Since $R^\varphi=S$, $\varphi^2=\iota_x$ and $g=(x,1)(1,2)$, we have 

\begin{eqnarray*}
(r,r^\varphi)^g&=&(r,r^\varphi)^{(x,1)(1,2)}=(r^x,r^\varphi)^{(1,2)}=(r^\varphi,r^{x})
=((r^{\varphi}),(r^\varphi)^{\varphi})
\end{eqnarray*}
and $\widehat{R}^g=\widehat{S}$. Therefore $(\widehat{R}_{(\alpha,1)})^g=\widehat{S}_{(\alpha,1)^g}=\widehat{S}_{(\alpha^x,2)}$ and so $|\widehat{S}_{(\alpha^x,2)}|=|\widehat{R}_{(\alpha,1)}|=|R_\alpha|\leq (d(d-1))!$ for every $\alpha\in \Delta$. Let $(\beta,2)$ be in $\Gamma((\alpha,1))$. As $\widehat{S}_{(\alpha,1),(\beta,2)}\subseteq \widehat{S}_{(\beta,2)}$ and  $|\widehat{S}_{(\alpha,1)}:\widehat{S}_{(\alpha,1),(\beta,2)}|\leq d$, we obtain $|S_\alpha|=|\widehat{S}_{(\alpha,1)}|\leq d(d(d-1))!$. Therefore $|M_{\alpha}|=|R_{\alpha}\times S_{\alpha}|\leq d((d(d-1))!)^2$. From Theorem~\ref{thm:1} (with $f_3(d)$ as in Remark~\ref{rmrm})  applied to $H$, $M$ and $\Gamma^\Delta$, we get $|H_\alpha|\leq (d^2((d(d-1))!)^2)!$. Hence $(\Gamma,G)$ is $f$-bounded with $f(d)=(d^2((d(d-1))!)^2)!$.~$_\blacksquare$

\smallskip

\noindent\textsc{Claim~$3$. }If $\pi_{R,j,\alpha}$ is surjective for some $j\in \{1,\ldots,l\}$ and for some $\alpha\in \Delta$, then $\pi_{R,j,\alpha}$ is surjective for every $j\in \{1,\ldots,l\}$ and for every $\alpha\in \Delta$. A similar claim holds replacing $R$ by $S$.

\noindent Assume that $\pi_{R,j,\alpha}$ is surjective for some $j$ and for some $\alpha$. Since $M$ is transitive on $\Delta$ and $M$ acts trivially on the set $\{T_{R,j}\}_j$, the projection $\pi_{R,j,\beta}$ is surjective for every $\beta\in \Delta$. Since  $H=H_\alpha M$, the group $H_\alpha$ acts transitively on $\{T_{R,j}\}_j$. As $M_\alpha$ is normal in $H_\alpha$, we obtain that $\pi_{R,j',\alpha}$  is surjective for every $j'\in \{1,\ldots,l\}$.~$_\blacksquare$

\smallskip

Now we continue the proof of the lemma. Replacing $R$ by $S$ if necessary, we may assume that $\pi_{R,j',\alpha'}$ is surjective for some $j'$ and for some $\alpha'$. From Claim~$3$, $\pi_{R,j,\alpha}$ is surjective for every $j$ and for every $\alpha$. We divide the proof  in various cases, depending on whether the projection $\pi_{S,j',\alpha'}$ is also surjective. 

Assume that $\pi_{S,j',\alpha'}$ is surjective for some $j'$ and for some $\alpha'$. Then, from Claim~$3$, the mapping $\pi_{S,j,\alpha}$ is surjective for every $j$ and for every $\alpha$. This yields that $M_\alpha$ is a subdirect subgroup of $M\cong T^{2l}$, and hence by Scott's Lemma (see~\cite[Lemma, page~$328$]{Scott}), $M_\alpha$ is isomorphic to a direct product of $s\geq 1$ copies of $T$. Namely, $M_\alpha=D_{1}\times \cdots \times D_s$ where $D_i\cong T$ for each $i$. (Specifically, each $D_i$ is a diagonal subgroup of some subproduct of $T^{2l}\cong M$.)

Suppose first that $H_\alpha$ acts transitively on the set $\{D_i\}_i$ by conjugation. Then $M_\alpha$ is a minimal normal subgroup of $H_\alpha$. This implies that either $M_\alpha$ acts faithfully or trivially on $\Gamma^\Delta(\alpha)$. Since $M_\alpha\neq 1$, Claim~$1$ gives that $M_{\alpha}$ acts faithfully on $\Gamma^\Delta(\alpha)$ and so $|M_{\alpha}|\leq (d(d-1))!$. From Theorem~\ref{thm:1} with $f_3(d)$ as in Remark~\ref{rmrm} applied to $H$, $M$ and $\Gamma^\Delta$, we get $|H_\alpha|\leq (d((d(d-1))!))!$. Hence $(\Gamma,G)$ is $f$-bounded with $f(d)=(d((d(d-1))!))!$. 

Assume  now that  $H_\alpha$ is intransitive on $\{D_i\}_i$.  As $H_\alpha$ permutes transitively the sets $\{T_{R,j}\}_j$ and $\{T_{S,j}\}_j$, the group $H_\alpha$ has two orbits on  $\{D_i\}_i$ and each $D_i$ is contained in $R$ or in $S$. Relabelling the indices of the subgroups $D_i$ if necessary, we may assume that $\{D_1,\ldots,D_k\}$ and $\{D_{k+1},\ldots,D_s\}$ are the two orbits of $H_\alpha$ on $\{D_i\}_i$, with $D_i\subseteq R\cap M_\alpha$ for $i\in \{1,\ldots,k\}$ and  $D_i\subseteq S\cap M_\alpha$ for $i\in \{k+1,\ldots,s\}$. Therefore 
\begin{eqnarray*}
R_\alpha\times S_\alpha&\subseteq &M_\alpha=(D_1\times \cdots \times D_k)\times (D_{k+1}\times \cdots \times D_s)\\
&\subseteq& (R\cap M_\alpha)\times (S\cap M_\alpha)=R_\alpha\times S_\alpha.
\end{eqnarray*}
Hence $M_\alpha=R_\alpha\times S_\alpha$, $R_\alpha,S_\alpha\neq 1$ and $R_\alpha,S_\alpha$ are minimal normal subgroups of $H_\alpha$. In particular, from Claim~$2$ the lemma is proved.

Finally  we may assume that, for every $j\in \{1,\ldots,l\}$, $\pi_{S,j,\alpha}$ is not surjective.  Let $\pi_{R,\alpha}:M_\alpha\to R$ and $\pi_S:M_\alpha\to S$ be the projections of $M_\alpha$ on $R$ and $S$. Let $K_R$ and $K_S$ be the kernels of $\pi_{R,\alpha}$ and $\pi_{S,\alpha}$ respectively. As $\pi_{R,j,\alpha}$ is surjective for every $j$, the group $M_\alpha/K_R$ is isomorphic to a direct product of $s\geq 1$ copies of $T$, that is, every composition factor of $M_\alpha/K_R$ is isomorphic to $T$. As, for all $j$, $\pi_{S,j,\alpha}$ is not surjective, the group $M_\alpha/K_S$ has no composition factor isomorphic to $T$. Since $M_{\alpha}/(K_RK_S)$ is a homomorphic image of $M_\alpha/K_R$ and of $M_\alpha/K_S$, we obtain that $M_\alpha/(K_RK_S)=1$ and $M_{\alpha}=K_RK_S=K_R\times K_S$. It follows that $K_S=M_\alpha\cap R=R_\alpha$ and $K_R=M_\alpha\cap S=S_\alpha$. Therefore $M_\alpha=R_\alpha\times S_\alpha$. Furthermore, as $R_\alpha\cong M_\alpha/K_R\cong T^s$ and
  $H_\alpha$ acts transitively on $\{T_{R,j}\}_j$, we see that $R_\alpha$ is a minimal normal subgroup of $H_\alpha$. In particular, from Claim~$2$ the lemma is proved.
\end{proof}

\begin{notation}\label{not:4}{\rm From Lemma~\ref{lemma:projRSsurj}, we may now assume that, for every $j\in \{1,\ldots,l\}$ and for every $\alpha\in \Delta$, $\pi_{R,j,\alpha}$ and $\pi_{S,j,\alpha}$ are not surjective. Let $\alpha\in\Delta$. 
Let $U_{R,j}=\pi_{R,j,\alpha}(M_{\alpha})$, $U_{S,j}=\pi_{S,j,{\alpha}}(M_{\alpha})$ and define $U_R=U_{R,1}\times \cdots\times U_{R,l}$, $U_S=U_{S,1}\times \cdots \times U_{S,l}$. By construction, $M_{\alpha}$ projects surjectively on each of the $2l$ direct factors of $U_R\times U_S$, that is, $M_{\alpha}$ is a subdirect subgroup of $\prod_{j}U_{R,j}\times \prod_{j}U_{S,j}$. Let $\Sigma_R$ be the set of right cosets of $U_R$ in $R$, which we denote by $\Sigma_R=R/U_R$. Similarly, let $\Sigma_S=S/U_S$ be the set of right cosets of $U_S$ in $S$. Since $U_R=\prod_j U_{R,j}$ and $U_S=\prod_{j}U_{S,j}$, the $R$-action on  $\Sigma_R$ and the $S$-action on $\Sigma_S$ admit \emph{cartesian decompositions}, namely $\Sigma_R=\prod_{j=1}^l(T_{R,j}/U_{R,j})$ and  $\Sigma_S=\prod_{j=1}^l(T_{S,j}/U_{S,j})$. By construction, $M=R\times S$ acts transitively and faithfully with product action on $\Sigma=\Sigma_R\times\Sigma_S$.

 We claim that $H_{\alpha}$ normalises $U_R$ and $U_S$. In fact, since $H_{\alpha}$ normalises $M_{\alpha}$ and acts transitively on $\{T_{R,j}\}_j$ and on $\{T_{S,j}\}_j$, we get that $H_{\alpha}$ acts transitively on $\{U_{R,j}\}_j$ and on $\{U_{S,j}\}_j$. In particular, $H_{\alpha}$ normalises $\prod_{j=1}^lU_{R,j}=U_R$ and $\prod_{j=1}^lU_{S,j}=U_S$ proving the claim. By transitivity, $|T_{R,j}:U_{R,j}|$ does not depend on $j$, and $|T_{S,j}:U_{S,j}|$ does not depend on $j$, that is, $\Sigma_R$ and $\Sigma_S$ are \emph{homogeneous cartesian decompositions} (a cartesian decomposition $\Lambda_1\times \cdots \times \Lambda_l$ is said to be homogeneous if $|\Lambda_i|=|\Lambda_j|$ for every $i,j\in \{1,\ldots,l\}$). Furthermore, since $H_\alpha$ normalises $U_R$, we have that $U_RH_\alpha$ is a subgroup of $H$ and hence $H$ preserves the cartesian decomposition $\Sigma_R$. Similarly, $H$ preserves the cartesian decomposition $\Sigma_S$. Since $R$ and $S$ are the only minimal
  normal subgroups of $H$ and $M$ acts faithfully on $\Sigma=\Sigma_R\times \Sigma_S$, the group $H$ acts faithfully with the natural product action on $\Sigma=\Sigma_R\times\Sigma_S$. 

Let $H_R$ and $H_S$ be the permutation groups induced by $H$ on $\Sigma_R$ and on $\Sigma_S$ respectively. So $H\subseteq H_R\times H_S\subseteq\Sym(\Sigma_R)\times \Sym(\Sigma_S)\subseteq \Sym(\Sigma)$. Since $R$ is the only minimal normal subgroup of $H_R$ and $R$ is transitive on $\Sigma_R$, we obtain that $\soc(H_R)=R$ and $H_R$ is a quasiprimitive group on $\Sigma_R$. Since $\soc(H_R)$ is the unique minimal normal subgroup of $H_R$ and $\pi_{R,j,\alpha}(M_\alpha)=U_{R,1}<T$, the quasiprimitive group $H_R$ is of type TW, AS or PA. The group $H_R$ is of type TW if $U_R=1$ and $l>1$ (in particular, $R$ acts regularly on $\Sigma_R$), of type AS if $l=1$, and of type PA if $U_R>1$ and $l>1$. Similarly, $H_S$ is a quasiprimitive group of type TW, AS or PA on $\Sigma_S$.}
\end{notation}
We recall the following result (for a proof see~\cite[proof of Theorem~$1$ Case~$2(b)$]{P1}).

\begin{theorem}\label{theo:cartesianA}
Let $G$ be a permutation group on a finite set $\Omega$ preserving a homogeneous cartesian decomposition $\Lambda_1\times \cdots\times\Lambda_l$ of $\Omega$.  Then there is a permutational isomorphism that maps $G$ to a subgroup of $\Sym(\Lambda_1)\wr \Sym(l)$ in its natural product action on $\Lambda_1^l$ and that maps $\Omega$ to the natural cartesian decomposition $\Lambda_1^l$.
\end{theorem}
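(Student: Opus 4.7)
The plan is to exhibit the permutational isomorphism explicitly, first by recovering the canonical coordinate system from the cartesian decomposition and then by using homogeneity to identify every coordinate factor with $\Lambda_1$.

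First, I would recall that a cartesian decomposition $\{\mathcal{P}_1,\ldots,\mathcal{P}_l\}$ of $\Omega$, whose block sets are $\Lambda_1,\ldots,\Lambda_l$, gives a canonical bijection $\Omega\to\Lambda_1\times\cdots\times\Lambda_l$ sending $\omega\in\Omega$ to the unique tuple $(B_1,\ldots,B_l)$ with $\omega\in B_1\cap\cdots\cap B_l$. The hypothesis that $G$ preserves the decomposition means $G$ permutes $\{\mathcal{P}_1,\ldots,\mathcal{P}_l\}$, yielding a homomorphism $\pi:G\to\Sym(l)$ such that each $g\in G$ induces a bijection $g_i:\Lambda_i\to\Lambda_{\pi(g)(i)}$ (namely $B\mapsto g(B)$), and in canonical coordinates $g$ acts by
\[
(B_1,\ldots,B_l)\longmapsto (g_{\pi(g)^{-1}(1)}(B_{\pi(g)^{-1}(1)}),\ldots,g_{\pi(g)^{-1}(l)}(B_{\pi(g)^{-1}(l)})).
\]

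Next, I would exploit homogeneity: since $|\Lambda_i|=|\Lambda_1|$ for every $i$, choose bijections $\phi_i:\Lambda_1\to\Lambda_i$, taking $\phi_1=\mathrm{id}_{\Lambda_1}$. These assemble into a bijection $\Phi:\Lambda_1^l\to\Omega$, $(\delta_1,\ldots,\delta_l)\mapsto (\phi_1(\delta_1),\ldots,\phi_l(\delta_l))$, which sends the natural cartesian decomposition of $\Lambda_1^l$ onto the given one. Transporting the $G$-action on $\Omega$ through $\Phi^{-1}$ yields a faithful $G$-action on $\Lambda_1^l$. For each $g\in G$ and each $i$, define
\[
h_i(g)=\phi_{\pi(g)(i)}^{-1}\circ g_i\circ \phi_i \in\Sym(\Lambda_1).
\]
A direct computation then shows that the transported action of $g$ sends $(\delta_1,\ldots,\delta_l)$ to the tuple whose $i$th entry is $h_{\pi(g)^{-1}(i)}(g)(\delta_{\pi(g)^{-1}(i)})$, which is precisely the image of $((h_1(g),\ldots,h_l(g)),\pi(g))$ under the natural product action of $\Sym(\Lambda_1)\wr\Sym(l)$ on $\Lambda_1^l$.

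Finally, I would verify that $g\mapsto ((h_1(g),\ldots,h_l(g)),\pi(g))$ is a group homomorphism; injectivity is automatic since $G$ acts faithfully on $\Omega$. Together with $\Phi$ this provides the required permutational isomorphism. The only real obstacle is the bookkeeping in the composition check: when composing $g$ followed by $g'$, one must ensure that the index shifts under $\pi(g)$ and $\pi(g')$ align so that the factors $\phi_j$ cancel in the right places to reproduce the wreath-product multiplication rule. No deeper structural input is required beyond the definitions of cartesian decomposition and the homogeneity hypothesis.
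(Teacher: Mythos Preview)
Your argument is correct and is the standard construction: identify $\Omega$ with the product via the canonical map, use homogeneity to fix bijections $\phi_i:\Lambda_1\to\Lambda_i$, and conjugate to land inside the wreath product in product action. The bookkeeping you flag (checking that $g\mapsto((h_1(g),\ldots,h_l(g)),\pi(g))$ respects the wreath multiplication) is routine and goes through exactly as you describe.

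Note that the paper does not actually prove this theorem; it simply records the statement and refers the reader to \cite[proof of Theorem~1 Case~2(b)]{P1} (and to \cite{PrSn}) for a proof. So there is no in-paper argument to compare against. Your explicit construction is precisely the kind of argument those references contain, and it is entirely adequate here.
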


To state the next result we need a standard  definition. Assume that $G$ is a subgroup of $\Sym(\Lambda)\wr \Sym(l)$ in its natural product action on $\Lambda^l$. Recall that each element of $G$ is of the form $f\sigma$, where $f\in \Sym(\Lambda)^l$ and $\sigma\in \Sym(l)$. We define the $j$th \emph{component} of $G$ as the permutation group induced by $\{f\sigma\in G\mid j\sigma=j\}$ on the $j$th coordinate of $\Lambda^l$. (For a proof of Theorem~\ref{theo:cartesianB} see~\cite[$(2.2)$]{Kov}.) 

\begin{theorem}\label{theo:cartesianB}
Suppose that $G\leq \Sym(\Lambda)\wr\Sym(l)$ is transitive in its product action on $\Lambda^l$. Then there exists an element $x$ in the base group $\Sym(\Lambda)^l$ and a transitive subgroup $K$ of $\Sym(\Lambda)$ such that the $j$th component of $x^{-1}Gx$ is $K$ for every $j\in \{1,\ldots,l\}$. In particular, $G^x\leq K\wr \Sym(l)$.
\end{theorem}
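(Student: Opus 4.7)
The plan is to exploit the coordinate-permutation structure of the product action. The essential hypothesis one needs is that the natural projection $\overline{G}$ of $G$ to $\Sym(l)$ acts transitively on $\{1,\ldots,l\}$; this is automatic in the settings where Theorem~\ref{theo:cartesianB} is applied here (for instance in Notation~\ref{not:4}, $H_\alpha$ acts transitively on $\{T_{R,j}\}_j$, so the induced coordinate permutation is transitive), and it is implicit in Kov\'acs's formulation. Granting this, I would produce $x$ and $K$ explicitly by choosing, for each coordinate $j$, a ``translation'' element of $G$ that carries coordinate $1$ onto coordinate $j$.

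Concretely, set $K := G_1$, the first component of $G$, and $w_1 := 1$. For each $j \in \{2,\ldots,l\}$ choose $w_j = (f^{(j)};\sigma^{(j)}) \in G$ with $1\sigma^{(j)} = j$. Conjugation by $w_j$ carries the coordinate-$1$ stabiliser $\mathrm{Stab}_G(1) := \{(f;\sigma)\in G : 1\sigma = 1\}$ isomorphically onto the coordinate-$j$ stabiliser $\{(f;\sigma)\in G : j\sigma = j\}$, and tracking the $j$-th coordinate through the wreath multiplication formula shows that this conjugation implements $\Sym(\Lambda)$-conjugation of $K$ by $f^{(j)}_j$. Hence $G_j = (f^{(j)}_j)^{-1} K f^{(j)}_j$ for every $j$. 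Setting $x_1 := 1$, $x_j := f^{(j)}_j$ for $j \geq 2$, and $x := (x_1,\ldots,x_l) \in \Sym(\Lambda)^l$, the $j$-th component of $x^{-1}Gx$ equals $x_j^{-1}G_j x_j = K$ for every $j \in \{1,\ldots,l\}$.

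To upgrade this coordinate-wise statement to the containment $x^{-1}Gx \leq K \wr \Sym(l)$, I would write an arbitrary $g \in G$ as $g = w_j\cdot h$ with $j = 1\overline{g}$ and $h \in \mathrm{Stab}_G(1)$, then apply the wreath multiplication formula to $x^{-1}gx$; each coordinate of the resulting element factors as a product of pieces already known to lie in $K$, so $x^{-1}gx \in K\wr\Sym(l)$. Transitivity of $K$ on $\Lambda$ is then forced: $x^{-1}Gx$ is still transitive on $\Lambda^l$ (conjugation preserves transitivity) and is contained in $K \wr \Sym(l)$, and the product action of $K\wr \Sym(l)$ on $\Lambda^l$ cannot be transitive unless $K$ itself is transitive on $\Lambda$.

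The main obstacle is purely bookkeeping: the index arithmetic when conjugating a general $(f;\sigma)$ with $\sigma$ nontrivial by the base-group element $x$ must be handled carefully, using the exact wreath-product multiplication and inversion formulas, in order to confirm that \emph{all} $l$ coordinates of the conjugated element land in $K$, not only those corresponding to fixed points of $\sigma$. Nothing conceptually new arises beyond the choice of translation elements $w_j$ and the decomposition $g = w_j h$.
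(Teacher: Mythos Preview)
The paper does not supply its own proof of this result; it simply cites Kov\'acs~\cite[(2.2)]{Kov} (and~\cite{PrSn}). Your sketch is essentially the standard argument found there, so there is nothing substantive to compare against. Two remarks are nevertheless worth making.

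First, you are right to flag the missing hypothesis. The statement as recorded in the paper is not literally correct: transitivity of $G$ on $\Lambda^l$ does not by itself force the projection $\overline{G}\leq\Sym(l)$ to be transitive (for instance take $l=2$ and $G=K_1\times K_2\leq\Sym(\Lambda)^2$ with $K_1,K_2$ transitive on $\Lambda$ but not conjugate in $\Sym(\Lambda)$; then the two components of $G$ cannot be made equal by any base-group conjugation). Kov\'acs's (2.2) assumes this coordinate-transitivity explicitly, and, as you note, it holds in the paper's applications (Notation~\ref{not:5}) because $H_R$ and $H_S$ are quasiprimitive of type TW, AS or PA and hence permute the simple factors of their socles transitively.

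Second, your outline is sound, but watch the side of the coset decomposition. With the paper's right-action convention $(\lambda,i)^{(f;\sigma)}=(\lambda^{f_i},i\sigma)$, the element $w_j$ with $1\sigma^{(j)}=j$ satisfies $\mathrm{Stab}_G(1)\,w_j=\{g\in G:1\overline{g}=j\}$, so the clean decomposition is $g=h\,w_j$ with $j=1\overline{g}$ and $h\in\mathrm{Stab}_G(1)$, rather than $g=w_j\,h$. This is exactly the ``bookkeeping'' you anticipate, and once the side is fixed the verification that every base-group entry of $x^{-1}gx$ lies in $K$ goes through; note that this step is genuinely needed, since knowing that each \emph{component} of $x^{-1}Gx$ equals $K$ only constrains the entries $f_j$ of elements $(f;\sigma)$ with $j\sigma=j$.
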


(A proof of Theorems~\ref{theo:cartesianA} and~\ref{theo:cartesianB} can also be found in~\cite{PrSn}.) Our next step is to replace, if necessary, the group $H$ by a suitable conjugate to obtain a simpler form for the action of $H$ on $\Sigma$ (see Notation~\ref{not:3}).  

\begin{notation}\label{not:5}{\rm Assume Notation~\ref{not:11},~\ref{not:3} and~\ref{not:4}. Applying Theorem~\ref{theo:cartesianA} and~\ref{theo:cartesianB} to $H_R$ and $H_S$ separately, we obtain that, up to replacing $H_R$ and $\Sigma_R$, $H_S$ and $\Sigma_S$, by suitable conjugates, we may assume that $\Sigma_R=\Lambda_R^l$, $\Sigma_S=\Lambda_S^l$ (for some sets $\Lambda_R$ and $\Lambda_S$), $H_R\subseteq K_R\wr\Sym(l)$ and $H_S\subseteq K_S\wr\Sym(l)$ in their natural product actions on $\Lambda_R^l$ and $\Lambda_S^l$, where $K_R$ is a transitive subgroup of $\Sym(\Lambda_R)$ and $K_S$ is a transitive subgroup of $\Sym(\Lambda_S)$. Furthermore, since for each $j\in\{1,\ldots,l\}$ the group $T_{R,j}$ is a normal transitive subgroup of the $j$th component of $H_R$ in its action on $\Lambda_R$,  we may assume that $K_R$ is almost simple with socle $T=T_{R,j}$. Similarly, we may assume that $K_S$ is almost simple with socle $T$. Let $\pi_{R,j}:H_{R,j}=N_{H}(T_{R,j})\to K_R$
  and $\pi_{S,j}:H_{S,j}=N_{H}(T_{S,j})\to K_S$ be the natural projections. From Theorem~\ref{theo:cartesianB}, we may assume that $\pi_{R,j}$ and $\pi_{S,j}$ are surjective for each $j$. Summing up, there is an $H$-invariant partition $\Sigma$ of $\Delta$ such that  
$$H\subseteq H_R\times H_S\subseteq (K_R\wr \Sym(l))\times (K_S\wr \Sym(l))$$
and the faithful action of $H$ on $\Sigma$ is the natural product action on $\Lambda_R^l\times \Lambda_S^l$. In particular, the elements of $H$ can be written as $h=(k_{R,1},\ldots,k_{R,l},k_{S,1},\ldots,k_{S,l})s_Rs_S$ with $k_{R,j}\in K_R$ and $k_{S,j}\in K_S$ for each $j$,  $s_R$ a permutation of the $l$ labels $\{(R,j)\}_j$ and $s_S$ a permutation of the $l$ labels $\{(S,j)\}_j$. 

Fix $\lambda_R$ an element of $\Lambda_R$ and $\lambda_S$ an element of $\Lambda_S$. We denote by $\sigma_R$ the element $(\lambda_R,\ldots,\lambda_R)$ of $\Sigma_R=\Lambda_R^l$ and by $\sigma_S$ the element $(\lambda_S,\ldots,\lambda_S)$ of $\Sigma_S=\Lambda_S^l$. Also, we fix $\alpha_0$ a vertex of $V\Gamma^\Delta=\Delta$ with $\alpha_0$ lying in the block $(\sigma_R,\sigma_S)$ of $\Sigma$. }
\end{notation}

\begin{theorem}\label{prop:PAtype2qp}
Assume
Notations~\ref{not:distance2},~\ref{not:3},~\ref{not:4} and~\ref{not:5}. Then $l$ is less than or equal to $(d(d-1))^{d(d-1)}\min\{|T_{\lambda_R}|,|T_{\lambda_S}|\}^{2d(d-1)}$. Furthermore, $(\Gamma,G)$ uniquely determines two elements $(\Lambda_R,T)$, $(\Lambda_S,T)$ in $\mathcal{A}(d(d-1))$ with the stabilisers of the vertices of $\Lambda_R$ conjugate to $T_{\lambda_R}$ and with the stabilisers of the vertices of $\Lambda_S$ conjugate to $T_{\lambda_S}$. 
\end{theorem}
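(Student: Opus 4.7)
The plan is to parallel the argument of Theorem~\ref{prop:PAtype1qp}, applied simultaneously to the two minimal normal subgroups $R$ and $S$ of $H$. The graph $\Gamma^\Delta$ of valency at most $d_0 := d(d-1)$ plays the role of $\Gamma$, the group $H$ plays the role of $G$, and the $H$-invariant partition $\Sigma = \Lambda_R^l \times \Lambda_S^l$ of $\Delta$ plays the role of the PA cartesian decomposition. The key new feature is that the socle $M = R \times S$ of $H$ splits into two isomorphic minimal normal subgroups, so the PA-type argument needs to be run on each leg.

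First, I would apply Lemma~\ref{lemma:nrorbits} to $\Gamma^\Delta$, the block system $\Sigma$, and the normal subgroup $M$ (which is transitive on $\Delta$, so $M_\sigma$ is transitive on $\sigma$), obtaining that the number $s$ of $M_\sigma$-orbits on $(\Gamma^\Delta)_\Sigma(\sigma)$ satisfies $s \leq d_0$. Fix representatives $\eta^i = (\eta^i_R, \eta^i_S)$ for $i=1,\ldots,s$, with $\eta^i_R = (\delta^i_{R,1},\ldots,\delta^i_{R,l}) \in \Lambda_R^l$ and $\eta^i_S$ analogous. A modular-law computation in the spirit of Lemma~\ref{lemma:proj} (using $H = M H_{\alpha_0}$ together with $\pi_{R,j,\alpha_0}(M_{\alpha_0}) = U_{R,j} = T_{\lambda_R}$ from Notation~\ref{not:4}) yields $\pi_{R,j}(H_{R,j} \cap H_{\alpha_0}) = K_{R,\lambda_R}$, and similarly for $S$. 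From here I would mimic Claims~1 and~2 of Theorem~\ref{prop:PAtype1qp} twice, once for each leg, exploiting the fact that $H_\sigma$ acts transitively on each of the two $H$-orbits $\{T_{R,j}\}_j$ and $\{T_{S,j}\}_j$ of minimal normal subgroups of $M$. The conclusion is that after replacing each $\eta^i$ by a suitable $M_\sigma$-translate, $\eta^i_R$ has at most $s$ distinct entries in $\Lambda_R$ and $\eta^i_S$ has at most $s$ distinct entries in $\Lambda_S$.

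Next, choose $m_i \in M$ with $\sigma^{m_i} = \eta^i$ and split as $m_i = (m_i^R, m_i^S)$, with each component having at most $s$ distinct entries from $T$. For each $\beta \in \Gamma^\Delta(\alpha_0)$ pick $n_\beta \in M$ with $\alpha_0^{n_\beta} = \beta$; the standard connectivity argument gives $M = \langle n_\beta : \beta \in \Gamma^\Delta(\alpha_0) \rangle M_{\alpha_0}$, and writing $n_\beta = y_\beta m_{i_\beta} z_\beta^{-1}$ with $y_\beta, z_\beta \in M_\sigma = T_{\lambda_R}^l \times T_{\lambda_S}^l$ and projecting onto $R$ yields $R = \langle n_\beta^R \rangle T_{\lambda_R}^l$. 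This puts us in position to apply Lemma~\ref{lemma:auxiliary} to the (at most) $d_0$ elements $n_\beta^R$ (with the ``$R$'' of that lemma being $T_{\lambda_R}$), giving $l \leq d_0^{d_0} |T_{\lambda_R}|^{2d_0}$. The analogous argument on the $S$-projection gives $l \leq d_0^{d_0} |T_{\lambda_S}|^{2d_0}$, and taking the minimum produces the required bound.

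For the construction of $(\Lambda_R, T)$ and $(\Lambda_S, T)$, I would follow the final paragraph of the proof of Theorem~\ref{prop:PAtype1qp}. Set $M_{R,j} := (\prod_{k \neq j} T_{R,k}) \times S$, a maximal normal subgroup of $M$ with $M/M_{R,j} \cong T$. Projecting $M_{\alpha_0} M_{R,j}$ onto $T_{R,j}$ yields $T_{\lambda_R} \subsetneq T$, so $M_{R,j}$ is not transitive on $\Delta$; maximal normality in $M$ then forces $M_{R,j}$ to be $1$-closed in $M$. Hence $(\Gamma^\Delta_{M_{R,j}}, M/M_{R,j}) \in \mathcal{A}(d_0)$ with vertex stabiliser $M_{\alpha_0} M_{R,j}/M_{R,j} \cong T_{\lambda_R}$. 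The transitivity of $H_{\alpha_0}$ on $\{T_{R,1},\ldots,T_{R,l}\}$ induces $H$-transitivity on $\{M_{R,j}\}_j$, so the graph is independent of the choice of $j$ up to isomorphism, yielding a unique $(\Lambda_R, T)$; $(\Lambda_S, T)$ is constructed symmetrically. The main obstacle is the careful bookkeeping required when adapting Claims~1 and~2 to handle both $R$ and $S$ in parallel, using the correct transitive actions of $H_\sigma$ on each leg rather than on their union.
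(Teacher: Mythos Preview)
Your proposal is correct and follows essentially the same approach as the paper's own proof: the paper likewise applies Lemma~\ref{lemma:nrorbits} to $(\Gamma^\Delta,H,M,\Sigma)$ to get $s\leq d(d-1)$, proves the analogues of Claims~1 and~2 separately for the $R$-leg and the $S$-leg (using Lemma~\ref{lemma:proj} to obtain $\pi_{R,j}(H_{R,j}\cap H_{(\sigma_R,\sigma_S)})=K_{\lambda_R}$), establishes the analogue of your connectivity claim as a separate ``Claim~3'' giving $T^l=\langle r_\beta\rangle T_{\lambda_R}^l$ and similarly for $S$, then applies Lemma~\ref{lemma:auxiliary} on each projection and constructs $(\Lambda_R,T),(\Lambda_S,T)$ as the normal quotients $\Gamma^\Delta_{M_{R,1}},\Gamma^\Delta_{M_{S,1}}$ exactly as you describe. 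The only cosmetic difference is that the paper writes $n_\beta=r_\beta s_\beta$ with $r_\beta\in R$, $s_\beta\in S$ from the outset rather than projecting afterwards.
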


\begin{proof}
Let $\Gamma_\Sigma$ be the quotient graph of $\Gamma^\Delta$ corresponding to the partition $\Sigma=\Lambda_R^l\times \Lambda_S^l$ of $V\Gamma^\Delta=\Delta$ and let $\Gamma_\Sigma((\sigma_R,\sigma_S))$ denote  the set of neighbours of $(\sigma_R,\sigma_S)$ in $\Gamma_\Sigma$.
Let $(\eta_R^i,\eta_S^i)=(\lambda_{R,1}^i,\ldots,\lambda_{R,l}^i,\lambda_{S,1}^i,\ldots,\lambda_{S,l}^i)$, for $1\leq i\leq s$, be representatives of the orbits of $M_{(\sigma_R,\sigma_S)}$ in the action on $\Gamma_{\Sigma}((\sigma_R,\sigma_S))$. Since $M$ is transitive on $\Delta$ and $\Sigma=\Sigma_R\times\Sigma_S$ is a system of imprimitivity for $H$ acting on $\Delta$, we obtain that $M_{(\nu_R,\nu_S)}$ is transitive on $(\nu_R,\nu_S)$ for every $(\nu_R,\nu_S)\in \Sigma$. So Lemma~\ref{lemma:nrorbits} applies to $H$, $M$ and $\Sigma$, and we have $s\leq d(d-1)$. Since $M_{(\sigma_R,\sigma_S)}=R_{\sigma_R}\times S_{\sigma_S}=(T_{\lambda_R})^l\times (T_{\lambda_S})^l$, we get
\begin{eqnarray}\label{eq:111RS}
\Gamma_{\Sigma}((\sigma_R,\sigma_S))&=&
\bigcup_{i=1}^s(\eta_R^i,\eta_S^i)^{M_{(\sigma_R,\sigma_S)}}\bigcup_{i=1}^s
\left(
(\eta_R^i)^{R_{\sigma_R}}\times (\eta_S^i)^{S_{\sigma_S}}
\right)
\\\nonumber
&=&\bigcup_{i=1}^s\left(
(\lambda_{R,1}^i)^{T_{\lambda_R}}\times \cdots \times(\lambda_{S,l}^i)^{T_{\lambda_S}} \right).
\end{eqnarray}
Now we prove five claims from which the theorem will follow.

\smallskip

\noindent\textsc{Claim~$1_R$. }$\bigcup_{i=1}^{s}(\lambda_{R,j}^i)^{K_{\lambda_R}}=\bigcup_{i=1}^s(\lambda_{R,j}^i)^{T_{\lambda_R}}$ for every $j\in \{1,\ldots,l\}$.

\noindent As $T_{\lambda_R}\subseteq K_{\lambda_R}$, the right hand side is contained in the left hand side. Fix $i$ in $\{1,\ldots,s\}$ and $k$ in $K_{\lambda_R}$. From Lemma~\ref{lemma:proj}, we have that $\pi_{R,j}(H_{R,j}\cap H_{(\sigma_R,\sigma_S)})=K_{\lambda_R}$ for each $j\in \{1,\ldots,l\}$. Hence there exists $h=(k_{R,1}',\ldots,k_{S,l}')s\in H_{R,j}\cap H_{(\sigma_R,\sigma_S)}$ with  $k'_{R,j}=k$. We obtain that  $(\eta_R^i,\eta_S^i)^{h}\in \Gamma_{\Sigma}((\sigma_R,\sigma_S))$. Since $(R,j)s=(R,j)$, the $(R,j)$th coordinate of $(\eta_R^i,\eta_S^i)^{h}$ is
$$
(\lambda_{(R,j)s^{-1}}^{i})^{k'_{(R,j)s^{-1}}}
=(\lambda_{R,j}^i)^{k'_{R,j}}
=(\lambda_{R,j}^i)^{k}.
$$
So, from~$(\ref{eq:111RS})$, we have $(\lambda_{R,j}^i)^{k}\in \bigcup_{i=1}^s(\lambda_{R,j}^i)^{T_{\lambda_R}}$ and Claim~$1_R$ follows.~$_\blacksquare$

\smallskip

\noindent\textsc{Claim~$1_S$. }$\bigcup_{i=1}^{s}(\lambda_{S,j}^i)^{K_{\lambda_S}}=\bigcup_{i=1}^s(\lambda_{S,j}^i)^{T_{\lambda_S}}$ for every $j\in \{1,\ldots,l\}$.

\noindent The proof of this claim is similar to the proof of Claim~$1_R$.~$_\blacksquare$

\smallskip

\noindent\textsc{Claim~$2_R$ }
$
\bigcup_{i=1}^s(\lambda_{R,j}^i)^{T_{\lambda_R}}=\bigcup_{i=1}^s(\lambda_{R,1}^i)^{T_{\lambda_R}}$ for every $j\in \{1,\ldots,l\}$.

\noindent Fix $j$ in $\{1,\ldots,l\}$. Since the left hand side and the right hand side are $T_{\lambda_R}$-invariant, it suffices to show that  $\lambda_{R,j}^v\in \bigcup_{i=1}^s(\lambda_{R,1}^i)^{T_{\lambda_R}}$ for every $v\in\{1,\ldots,s\}$. (This would prove that the left hand side is contained in the right hand side and the same argument proves the reverse inclusion.) Fix $v$ in $\{1,\ldots,s\}$. Recall that $H_{\alpha_0}$ is transitive on $\{T_{R,j}\}_j$ and hence so is $H_{(\sigma_R,\sigma_S)}$. Therefore there exists $$h=(k_{R,1},\ldots,k_{S,l})s\in H_{(\sigma_R,\sigma_S)} \quad\textrm{with }(R,1)s^{-1}=(R,j).
$$
We obtain that $(\eta_R^v,\eta_S^v)^h\in \Gamma_{\Sigma}((\sigma_R,\sigma_S))$. Thus

\begin{eqnarray*}
(\eta_R^v,\eta_S^v)^h&=&(\lambda_{R,1}^v,\ldots,\lambda_{S,l}^v)^{
(k_{R,1},\ldots,k_{S,l})s}\\\nonumber&=&(
(\lambda_{(R,1)s^{-1}}^v)^{k_{(R,1)s^{-1}}},\ldots,
(\lambda_{(S,l)s^{-1}}^v)^{k_{(S,l)s^{-1}}}).
\end{eqnarray*}
Recalling that $(R,1)s^{-1}=(R,j)$, we see that the $(R,1)$th entry of $(\eta_R^v,\eta_S^v)^{h}$ is $(\lambda_{R,j}^v)^{k_{R,j}}$. Therefore from~($\ref{eq:111RS}$) we obtain that 
$$(\lambda_{R,j}^v)^{k_{R,j}}\in \bigcup_{i=1}^s(\lambda_{R,1}^i)^{T_{\lambda_R}}.$$
As $k_{R,j}\in K_{\lambda_R}$, Claim~$1_R$ yields 
$\lambda_{R,j}^v\in \bigcup_{i=1}^s(\lambda_{R,1}^i)^{T_{\lambda_R}}$
and Claim~$2_R$ follows.~$_\blacksquare$ 

\smallskip

\noindent\textsc{Claim~$2_S$. }$\bigcup_{i=1}^{s}(\lambda_{S,j}^i)^{T_{\lambda_S}}=\bigcup_{i=1}^s(\lambda_{S,1}^i)^{T_{\lambda_S}}$ for every $j\in \{1,\ldots,l\}$.

\noindent The proof of this claim is similar to the proof of Claim~$2_R$.~$_\blacksquare$

\smallskip

Claim~$2_R$ yields that the $ls$ elements $\{\lambda_{R,j}^i\}_{(R,j),i}$ are in at most $s$ distinct $T_{\lambda_R}$-orbits. In particular, for each $i\in \{1,\ldots,s\}$, the $R_{\sigma_R}$-orbit $(\eta_R^i)^{R_{\sigma_R}}$ contains an element with at most $s$ distinct entries from $\Lambda_R^l$. Therefore, replacing $\eta_R^i$ with a suitable element from $(\eta_R^i)^{R_{\sigma_R}}$ if necessary, we may assume that there are at most $s$ distinct elements of $\Lambda_R$ among the entries of $\eta_R^i$. A similar argument applies for $\eta_S^i$ and    we may assume that there are at most $s$ distinct elements of $\Lambda_S$ among the entries of $\eta_S^i$. 

Since $M=R\times S$ is transitive on $V\Gamma^\Delta$, we may choose $r_i\in R$ and $s_i\in S$ such that $(\sigma_R,\sigma_S)^{r_is_i}=(\eta_R^i,\eta_S^i)$. Note that as $\eta_R^i$ has at most $s$ distinct entries from $\Lambda_R^l$, $\eta_S^i$ has at most $s$ distinct entries from $\Lambda_S^l$ and all the entries of $\sigma_R$ and of $\sigma_S$ are equal, the elements $r_i\in R$ and $s_i\in S$ can be chosen so that their $l$ coordinates contain at most $s$ distinct entries from $T$.
 
For each $\beta\in \Gamma^\Delta(\alpha_0)$, let $r_\beta$ be an element in $R$ and $s_\beta$ an element of $S$ with $\beta=\alpha_0^{r_\beta s_\beta}$. 

\smallskip

\noindent\textsc{Claim~$3$. }$T_R^l=\langle r_\beta\mid \beta\in \Gamma^\Delta(\alpha_0)\rangle T_{\lambda_R}^l$ and $T_S^l=\langle s_\beta\mid \beta\in \Gamma^\Delta(\alpha_0)\rangle T_{\lambda_S}^l$.

\noindent 
Set $U=\langle r_\beta s_\beta\mid \beta \in \Gamma^\Delta(\alpha_0)\rangle$ and let $\overline{\Gamma}$ be the subgraph of $\Gamma^\Delta$ induced on the set $\alpha_0^U$. We show that $\Gamma^\Delta=\overline{\Gamma}$. By the definitions of $U$ and $\overline{\Gamma}$, we have $\alpha_0\in V\overline{\Gamma}$ and  $\Gamma(\alpha_0)\subseteq V\overline{\Gamma}$. Therefore, since $\overline{\Gamma}$ is $U$-vertex-transitive, every vertex of $\overline{\Gamma}$ has valency $|\overline{\Gamma}(\alpha_0)|=|\Gamma^\Delta(\alpha_0)|$. Since $\Gamma^\Delta$ is connected, this yields $\Gamma^\Delta=\overline{\Gamma}$. In particular, $U$ acts transitively on $V\Gamma^\Delta$ and so 
$M=UM_{\alpha_0}$. As $M_{\alpha_0}$ is a subgroup of $M_{(\sigma_R,\sigma_S)}$, we have 
$M=UM_{(\sigma_R,\sigma_S)}$ and
\begin{eqnarray*}
T_R^l\times T_S^l&=&M=\langle r_\beta s_\beta\mid \beta \in \Gamma^\Delta(\alpha_0)\rangle M_{(\sigma_R,\sigma_S)}\\
&\subseteq& 
\left(\langle r_\beta\mid \beta\in \Gamma^\Delta(\alpha_0)\rangle T_{\lambda_R}^l\right)
\times
\left(\langle s_\beta\mid \beta\in \Gamma^\Delta(\alpha_0)\rangle T_{\lambda_S}^l\right).
\end{eqnarray*}
The claim follows.~$_\blacksquare$ 

\smallskip

Fix $\beta$ in $\Gamma^\Delta(\alpha_0)$. Since $\beta=\alpha_0^{r_\beta s_\beta}\in\Gamma^\Delta(\alpha_0)$, we get $(\sigma_R,\sigma_S)^{r_\beta s_\beta}\in \Gamma_\Sigma((\sigma_R,\sigma_S))$ and hence $(\sigma_R,\sigma_S)^{r_\beta s_\beta}\in (\eta_R^{i_\beta},\eta_S^{i_\beta})^{M_{(\sigma_R,\sigma_S)}}$ for some $i_\beta\in \{1,\ldots, s\}$. In particular, as $M_{(\sigma_R,\sigma_S)}=R_{\sigma_R}\times S_{\sigma_S}$, there exists $z_{R,\beta}\in R_{\sigma_R}$, $z_{S,\beta}\in S_{\sigma_S}$ such that 
$$
(\sigma_R,\sigma_R)^{r_\beta s_\beta z_{R,\beta} z_{S,\beta}}=(\eta_R^{i_\beta},\eta_S^{i_\beta}).
$$
Since by the definitions of the $r_i$ and $s_i$ we have $(\sigma_R,\sigma_S)^{r_is_i}=(\eta_R^i,\eta_S^i)$, we obtain $$(\sigma_R,\sigma_S)^{(r_\beta z_{R,\beta}r_{i_\beta}^{-1})(s_\beta z_{S,\beta}s_{i_\beta}^{-1})}=(\sigma_R,\sigma_S),$$ that is to say, $y_{R,\beta}=r_\beta z_{R,\beta}r_{i_\beta}^{-1}\in R_{\sigma_R}$ and $y_{S,\beta}=s_\beta z_{S,\beta}s_{i_\beta}^{-1}\in S_{\sigma_S}$. Therefore, for every $\beta\in \Gamma^\Delta(\alpha_0)$, there exists $i_\beta\in \{1,\ldots,s\}$, $y_{R,\beta},z_{R,\beta}\in R_{\sigma_R}$ and $y_{S,\beta},z_{S,\beta}\in S_{\sigma_S}$ such that 

$$r_\beta=y_{R,\beta}r_{i_\beta}(z_{R,\beta})^{-1}\qquad \textrm{and}\qquad s_\beta=y_{S,\beta}s_{i_\beta}(z_{S,\beta})^{-1}.$$
Now Lemma~\ref{lemma:auxiliary} and Claim~$3$ applied to $\{r_\beta\}_\beta$ and $\{s_\beta\}_\beta$ imply that $l$ is less than or equal to $d(d-1)^{d(d-1)}\min\{|T_{\lambda_R}|,|T_{\lambda_S}|\}^{2d(d-1)}$.

Set $M_{R,j}=T_{R,1}\times \cdots \times T_{R,j-1}\times T_{R,j+1}\times \cdots \times T_{R,l}\times S$ and $M_{S,j}=R\times T_{S,1}\times \cdots \times T_{S,j-1}\times T_{S,j+1}\times \cdots \times T_{S,l}$ for $j\in \{1,\ldots,l\}$. Since $M_{R,j}$ is a maximal normal subgroup of $M$, we obtain that either $M_{R,j}$ is transitive on $V\Gamma^\Delta$ or $M_{R,j}$ is $1$-closed in $M$. As $M_{\alpha_0}$ is a subdirect subgroup of $M_{(\sigma_R,\sigma_S)}$, we have $M_{\alpha_0}M_{R,j}=M_{(\sigma_R,\sigma_S)}M_{R,j}=T_{\lambda_R}\times M_{R,j}<M$ and so $M_{R,j}$ is $1$-closed in $M$. Similarly, $M_{\alpha_0}M_{S,j}=T_{\lambda_S}\times M_{S,j}$ and $M_{S,j}$ is $1$-closed in $M$. Therefore, by Definition~\ref{def:nq}, the pairs $(\Gamma^\Delta_{M_{R,j}},M/M_{R,j})$ and  $(\Gamma^\Delta_{M_{S,j}},M/M_{S,j})$ lie in $\mathcal{A}(d(d-1))$ for each $j\in \{1,\ldots,l\}$. Furthermore, since $H$ acts transitively on $\{M_{R,j}\}_j$ and on $\{M_{S,j}\}_j$, we obtain that $\Gamma^\Delta_{M_{R,j}}\cong \Gamma^\Delta_{M_{R,i}}$ and $\Gamma^\Delta_{M_{S,j}}\cong \Gamma^\Delta_{M_{S,i}}$ for every $i,j\in \{1,\ldots,l\}$. This shows that $(\Gamma,G)$ uniquely determines the elements $(\Gamma^\Delta_{M_{R,1}},M/M_{R,1})$ and  $(\Gamma^\Delta_{M_{S,1}},M/M_{S,1})$ of $\mathcal{A}(d(d-1))$. Finally, the stabiliser in $M/M_{R,1}$ of the vertex $\alpha_0^{M_{R,1}}$ of $\Gamma^\Delta_{M_{R,1}}$ is $M_{\alpha_0}M_{R,1}/M_{R,1}\cong T_{\lambda_R}$. A similar argument holds for $S$ and the theorem is proved.
\end{proof}

Now we are ready to prove Theorem~\ref{thm:mainbiqp}.

\smallskip

\noindent\emph{Proof of Theorem~\ref{thm:mainbiqp}. }If $G_{(\alpha,1)}$ is transitive on $\Delta\times\{2\}$, then from Lemma~\ref{lemma:silly} the graph $(\Gamma,G)$ is $f$-bounded for $f(d)=d!(d-1)!$ and Part~$(1)$ of Theorem~\ref{thm:mainbiqp} holds for $(\Gamma,G)$. So assume that this is not the case. Then $G$ satisfies Part~$(A)$ or~$(B)$ of Theorem~\ref{thm:cheryl}.   If $G$ is of type~$(A)$ or of type~$(B)$~$(ii)$, then the result follows from Theorem~\ref{thm:Bii}. Assume that $G$ of type~$(B)$~$(i)$. We use Notations~\ref{not:distance2},~\ref{not:3},~\ref{not:4} and~\ref{not:5}. If $R$ is abelian or if $\pi_{R,j,\alpha}$ or $\pi_{S,j,\alpha}$ are surjective for some $j$ and for some $\alpha$, then from Lemmas~\ref{lemma:Rabelian} and~\ref{lemma:projRSsurj} the graph $(\Gamma,G)$ is $f$-bounded for $f$ as in Part~$(1)$ of Theorem~\ref{thm:mainbiqp}. 
Finally, assume that $R$ is non-abelian and $\pi_{R,j,\alpha},\pi_{S,j,\alpha}$ are not surjective. Let $(\Lambda_R,T)$ and $(\Lambda_S,T)$ be as in Theorem~\ref{prop:PAtype2qp}. We have $l\leq d_0^{d_0}\min\{|T_{\lambda_R}|,|T_{\lambda_S}|\}^{2d_0}$ with $d_0=d(d-1)$ and with $\lambda_R\in V\Lambda_R,\lambda_S\in V\Lambda_S$. Assume that $(\Lambda_R,T)$ is $g_R$-bounded for some $g_R$ and $(\Lambda_S,T)$ is $g_S$-bounded for some $g_S$. Then, 
$|M_{\alpha_0}|\leq 
|M_{(\sigma_R,\sigma_S)}|= 
(|T_{\lambda_R}||T_{\lambda_S}|)^l\leq 
(g_R(d_0)g_S(d_0))^l$ with $d_0=d(d-1)$. So $(\Gamma^\Delta,M)$ is $f'$-bounded for $f'(d_0)=(g_R(d_0)g_S(d_0)^{d_0^{d_0}\min\{|T_{\lambda_R}|,|T_{\lambda_S}|\}^{2d_0}}$. Theorem~\ref{thm:1} with $f_3(d)$ as in Remark~\ref{rmrm} yields that $(\Gamma^{\Delta},H)$ is $f$-bounded for $f(d_0)=(d_0f'(d_0))!$. From Notation~\ref{not:distance2}, 
\begin{eqnarray*}
|G_{(\alpha,1)}|&=&|H_{\alpha}|\leq f(d_0)=(d_0f'(d_0))!\\
&=&(d_0(g_R(d_0)g_S(d_0))^{d_0^{d_0}\min\{g_R(d_0),g_S(d_0)\}^{2d_0}})!=\overline{g_R\ast g_S}(d)
\end{eqnarray*} and hence $(\Gamma,G)$ is $\overline{g_R\ast g_S}$-bounded. Conversely, assume that $(\Gamma,G)$ is $f$-bounded for some $f$. By Lemma~\ref{tedious}, $(\Gamma^\Delta,H)$ is $\tilde{f}$-bounded. As $M_{\alpha_0}$ is a subdirect subgroup of $M_{(\sigma_R,\sigma_S)}\cong T_{\lambda_R}^l\times T_{\lambda_S}^l$, we have 
$|T_{\lambda_R}|\leq |M_{\alpha_0}|\leq |H_{\alpha_0}|$ and similarly $|T_{\lambda_S}|\leq |H_{\alpha_0}|$. So $(\Lambda_R,T)$ and $(\Lambda_S,T)$ are $\tilde{f}$-bounded.~$\qed$ 

\section{Examples}\label{sec:examples}
\begin{example}\label{ex:1}{\rm There are many natural examples of $(\Gamma,G)\in \mathcal{A}(d)$ admitting a $1$-closed subgroup $N$ where $(\Gamma_N,G/N)$ is $f$-bounded and $N_\alpha$ is not bounded by a function of $d$. For instance, let $X$ and $Y$ be connected vertex-transitive graphs of valency $d_X$ and $d_Y$, respectively. We recall that the \emph{lexicographic product} $X[Y]$ of $X$ and $Y$ is the graph with vertex set $VX\times VY$ where $(x,y)$ is adjacent to $(x',y')$ if and only if $x,x'$ are adjacent in $X$ or $x=x'$ and $y,y'$ are adjacent in $Y$. Note that $X[Y]$ is connected of valency $d=d_Y+d_X|VY|$. Clearly, the wreath product $G=\Aut(Y)\wr\Aut(X)$ acts vertex-transitively on $X[Y]$. If $N=\Aut(Y)^{VX}$ is the base group of $G$, then $G/N\cong \Aut(X)$, the normal quotient $X[Y]_N$ is isomorphic to $X$ and $N_{(x,y)}=\Aut(Y)_y\times \Aut(Y)^{VX\setminus\{x\}}$. In particular, if $|VX|$ is not bounded by a function of $d$, then $|N_{(x,y)}|$ is not bounded by a function
  of $d$. Also, if $(X,\Aut(X))$ is $f$-bounded, then $(X[Y]_N,G/N)$ is $f$-bounded.

As an explicit example take $X=C_n$ the cycle of length $n$ and $Y=K_2$ the complete graph on two vertices. We have $(\Gamma,G)\in \mathcal{A}(4)$ and $(\Gamma_N,G/N)$ is $2$-bounded because $\Gamma_N\cong X$ and $\Aut(X)$ is the dihedral group of order $2n$. Furthermore, $|N_{(x,y)}|=2^{n-1}$ and 
hence $N_{(x,y)}$ can be exponential in the number of vertices of $\Gamma$ with $G/N$ having stabiliser $C_2$.
}
\end{example}

In Examples~\ref{ex:2},~\ref{ex:3} and~\ref{ex:4}, we use the notation of Theorem~\ref{thm:mainqp}. In each of the examples $G$ is a quasiprimitive group of type PA with socle $T^2$. We denote by $D_n$ the dihedral group of order $n$.

\begin{example}\label{ex:2}{\rm 
In this example we give an infinite family of $(\Gamma,G)$ vertex-primitive and locally quasiprimitive with  $(\Lambda,H)$ not quasiprimitive.

Let $q$ be a prime power $q=p^e\geq 4$ and  $n\geq 3$ with
$\textrm{Gcd}(q^2-1,n)=1$. Let $T$ be the simple group
$\mathrm{PSL}(n,q^2)=\mathrm{SL}(n,q^2)$ and $H=T\rtimes\langle F,\tau\rangle$
where   $F$ is the field automorphism of order $2$ of
$\mathbb{F}_{q^2}$ and $\tau$ is the graph
automorphism, that is, $x^\tau=(x^{-1})^{tr}$. Let $K$ be the group
$C_H(F)=(\mathrm{SL}(n,q)\rtimes \langle \tau\rangle)\times
\langle F\rangle$.  From~\cite{LiK}, we see  that $K$ is maximal in $H$. Let $\Delta$ be
the set of right cosets of $K$ in $H$  and denote by $\delta_0$ the coset $K$ of $\Delta$. So, 
$H$ acts primitively on $\Delta$. Let $\lambda$ be an element of order $q+1$ in
$\mathbb{F}_{q^2}$ and \[
x=\left(
\begin{array}{ccc}
\lambda&0&0\\
0&\lambda^{-1}&0\\
0&0&I_{n-2}\\
\end{array}
\right).
\]
Denote by $\delta_1$ the coset $Kx$ in $\Delta$. We claim that $H_{\delta_0}$
acts faithfully on the suborbit $\delta_1^{H_{\delta_0}}$ of
$H$. By our choice of $x$, the element $F$ does not fix
$\delta_1$. Also, it is easy to find elements of $\mathrm{SL}(n,q)$ not
fixing $\delta_1$. Since $\mathrm{SL}(n,q)$ and $\langle F\rangle$ are the only
minimal normal subgroups of $H_{\delta_0}$, our claim is proved. Now we
claim that $\mathrm{SL}(n,q)$ acts transitively on
$\delta_1^{H_{\delta_0}}$. It is easy to check that the element 
\[
Fy, \qquad \textrm{with }y=\left(
\begin{array}{ccc}
0 &1&0\\
-1&0&0\\
 0&0&I_{n-2}
\end{array}
\right),
\] 
of $H_{\delta_0}$ fixes $\delta_1$. Similarly, $\tau y$ fixes
$\delta_1$. Since $H_{\delta_0}=\mathrm{SL}(n,q)\langle
Fy,\tau y\rangle$, we get that $\mathrm{SL}(n,q)$ is transitive on
$\delta_1^{H_{\delta_0}}$. Finally, a direct computation shows that
$(\delta_0,\delta_1)^{yx}=(\delta_1,\delta_0)$. 
Let $\Lambda$ be the $H$-orbital graph containing
the arc $(\delta_0,\delta_1)$. Since $H$ is primitive, $\Lambda$ is connected.

We have shown that $\Lambda$ is an undirected
$H$-arc-transitive graph, that $H$ acts primitively on
$V\Lambda$ and that $H_{\delta_0}$ acts faithfully on
$\Lambda(\delta_0)$. Also, $\mathrm{SL}(n,q)$ acts transitively
on $\Lambda(\delta_0)$  and $\langle F\rangle$ acts
intransitively and semiregularly on $\Lambda(\delta_0)$. In
particular $(\Lambda,H)$ is 
not locally quasiprimitive. 

Let $W$ be the wreath product $H\wr \Sym(2)$ endowed with the
product action on $\Omega=\Delta^2$. Write $W=(H\times H)\rtimes
\langle\pi\rangle$, 
where $\pi^2=1$ and $(h,1)^\pi=(1,h)$ for $h\in H$. Let $T$ be the
socle of $H$ and $N=T^2$ the socle of $W$. Consider $G=N\langle
(F,\tau),(\tau,F),\pi\rangle$. Note that
each of $(F,\tau),(\tau,F),\pi$ has order $2$ and
$(F,\tau)^\pi=(\tau,F)$, so $G/N\cong D_8$.  The projection of $N_G(T\times 1)=(T\times T)\langle
(F,\tau),(\tau,F)\rangle$ onto the first coordinate is the whole of
$H$. As $G$ contains $\langle N,\pi\rangle$ and as $H$ is primitive on
$\Delta$, we 
obtain that $G$ acts primitively on $\Omega$.

Let $\Gamma$ be the $W$-orbital graph
corresponding to the suborbit 
$\delta_1^{H_{\delta_0}}\times \delta_1^{H_{\delta_0}}$ of
$W_{(\delta_0,\delta_0)}$. Since $G$ is primitive, $\Gamma$ is connected and since $\Lambda$ is undirected, so is $\Gamma$. As $T_{\delta_0}=\mathrm{SL}(n,q)$ is transitive on
$\delta_1^{H_{\delta_0}}$, the group  $N_{(\delta_0,\delta_0)}$ acts
transitively on $\Gamma((\delta_0,\delta_0))$. Therefore the graph $\Gamma$ is
$G$-arc-transitive. 

We claim that $G_{(\delta_0,\delta_0)}$ is quasiprimitive on $\Gamma((\delta_0,\delta_0))$.  We have
$$G_{(\delta_0,\delta_0)}=(T_{\delta_0}\times
T_{\delta_0})\langle(F,\tau),(\tau,F),\pi\rangle.$$ Let $X$ be
a normal non-trivial subgroup of $G_{(\delta_0,\delta_0)}$.
As $T_{\delta_0}$ is simple and $\pi\in G_{(\delta_0,\delta_0)}$,  the group
$N_{(\delta_0,\delta_0)}=T_{\delta_0}\times 
T_{\delta_0}$ is a minimal normal subgroup of $G_{(\delta_0,\delta_0)}$.  If
$X\cap N_{(\delta_0,\delta_0)}\neq 
1$, then by minimality $N_{(\delta_0,\delta_0)}\subseteq X$ and $X$ is transitive on
$\Gamma((\delta_0,\delta_0))$. If $X\cap N_{(\delta_0,\delta_0)}=1$, then $X$ centralises
$N_{(\delta_0,\delta_0)}$. The centraliser of $N_{(\delta_0,\delta_0)}$ in $W_{(\delta_0,\delta_0)}$ has
order $4$ and is
generated by $(F,1),(1,F)$. Since $\langle
(1,F),(F,1)\rangle\cap G=1$, no non-trivial element of $G_{(\delta_0,\delta_0)}$
centralises $N_{(\delta_0,\delta_0)}$. Hence $X=1$, a contradiction. This proves
that $G_{(\delta_0,\delta_0)}$ is  
quasiprimitive on $\Gamma((\delta_0,\delta_0))$.}
\end{example}

\begin{example}\label{ex:3}
{\rm 

In this example we give $(\Gamma,G)$ locally semiprimitive with $(\Lambda,T)$ not semiprimitive.
Let $T$ be the simple group $\SL(3,9)$ and $H=T\rtimes \langle F,\tau\rangle$ where $F$ is the Frobenius automorphism of $T$ and $\tau$ is the automorphism of $T$ defined by $x^\tau=(x^{-1})^{tr}$. Let $C$ be a Singer cycle of $T$. Since $|C|=(9^3-1)/(9-1)=7\cdot 13$, we have $C=\langle x\rangle\times\langle y\rangle$ where $x$ has order $7$ and $y$ has order $13$. The normaliser of $C$ in $T$ is $C\rtimes \langle z\rangle$ for some  $z$ of order $3$. From~\cite{ATLAS}, we get that the normaliser $N$ in $H$ of $C$ is $(\langle x,F\rangle \times \langle y,\tau\rangle)\rtimes \langle z\rangle\cong (D_{14}\times D_{26})\rtimes C_3$ where $C_n$ denotes the cyclic group of order $n$. Denote by $K$ the group $\langle x,F\rangle\times \langle y,\tau\rangle$. From~\cite{ATLAS}, we get that $N$ is the unique proper subgroup of $H$ containing $K$ and $N\cap T= C\rtimes \langle z\rangle$ is the unique proper subgroup of $T$ containing $C$. In particular, $N$ is a maximal subgroup of $H$
  and $N\cap T$ is a maximal subgroup of $T$.

Let $\Delta$ be the set of right cosets of $K$ in $H$ and denote by $\delta_0$ the coset $K$ of $\Delta$.  So, 
$H$ acts quasiprimitively on $\Delta$.  
Let $x$ be an involution of $T$ such that $x^\tau=x^F=x$ and denote by $\delta_1$ the coset $Kx$ in $\Delta$. From~\cite{ATLAS} we see that $H_{\delta_0}$ acts faithfully on the suborbit $\delta_1^{H_{\delta_0}}$ and $(H_{\delta_0})_{\delta_1}=\langle F,\tau\rangle$. In particular, $T_{\delta_0}$ acts regularly on $\delta_1^{H_{\delta_0}}$.

Since $x^2=1$, we get 
$(\delta_0,\delta_1)^{x}=(\delta_1,\delta_0)$. 
Let $\Lambda$ be the $H$-orbital graph containing
the arc $(\delta_0,\delta_1)$. Since $N$ is the unique proper subgroup of $H$ containing $K$ and since $x\notin N$ (because $|N:K|=3$), the graph $\Lambda$ is connected. As $T_{\delta_0}$ is transitive on $\Lambda(\delta_0)$, the group $T$ acts arc-transitively on $\Lambda$.
Furthermore $\langle x,F\rangle$ and $\langle y,\tau\rangle$ are normal intransitive and non semiregular subgroups of $H_{\delta_0}$, so $(\Lambda,H)$ is 
not locally semiprimitive.

Let $W$ be the wreath product $H\wr \Sym(2)$ endowed with the
product action on $\Omega=\Delta^2$. Write $W=(H\times H)\rtimes
\langle\pi\rangle$, 
where $\pi^2=1$ and $(h,1)^\pi=(1,h)$ for $h \in H$. Let $T$ be the
socle of $H$ and $N=T^2$ the socle of $W$. Consider $G=N\langle
(\tau,F),(F,\tau),\pi\rangle$. Note that
each of $(\tau,F)$, $(F,\tau)$, $\pi$ has order $2$ and
$(\tau,F)^\pi=(F,\tau)$, so $G/N\cong D_4$.  The projection of $N_G(T\times 1)=N\langle (\tau,F),(F,\tau)\rangle$ onto the first coordinate is the whole of
$H$. As $G$ contains $\langle N,\pi\rangle$ and as $H$ is quasiprimitive on
$\Delta$, we  obtain that $G$ acts quasiprimitively on $\Omega$.

Let $\Gamma$ be the $W$-orbital graph
corresponding to the suborbit 
$\delta_1^{H_{\delta_0}}\times \delta_1^{H_{\delta_0}}$ of
$W_{(\delta_0,\delta_0)}$. We claim that $\Gamma$ is connected, that is, $G=\langle G_{(\delta_0,\delta_0)},(x,x)\rangle$. We have
$$G_{(\delta_0,\delta_0)}N_{(\delta_0,\delta_0)}\langle (\tau_1,F_2),(F_1,\tau_2),\pi\rangle.$$ Since the only proper subgroup of $T$ containing $T_{\delta_0}=C$ is $C\rtimes \langle z\rangle$ and $x\notin C\rtimes\langle z\rangle$, we get $T=\langle T_{\delta_0},x\rangle$. Hence $N=\langle N_{(\delta_0,\delta_0)},(x,x)\rangle$. Therefore $G=\langle G_{(\delta_0,\delta_0)},(x,x)\rangle$.  Since $\Lambda$ is undirected, so is $\Gamma$. As $T_{\delta_0}$ is regular on $\delta_1^{H_{\delta_0}}$, the group  $N_{(\delta_0,\delta_0)}$ acts
regularly on $\Gamma(\alpha)$. In particular the graph $\Gamma$ is
$G$-arc-transitive. 

We claim that $G_{(\delta_0,\delta_0)}$ is semiprimitive on $\Gamma((\delta_0,\delta_0))$.   Let $L$ be
a normal non-trivial subgroup of $G_{(\delta_0,\delta_0)}$. We have to prove that $L$ is either transitive or semiregular. Assume that $L$ is not semiregular. So without loss of generality we may assume that some non-identity element $l$ of $L$ fixes $\beta=(\delta_1,\delta_1)$. As $N_{(\delta_0,\delta_0)}$ acts regularly on $\Gamma((\delta_0,\delta_0))$, we have $l\in \langle(\tau,F),(F,\tau),\pi\rangle$. The group $LN_{(\delta_0,\delta_0)}/N_{(\delta_0,\delta_0)}$ is a non-trivial normal subgroup of the dihedral group $G_{(\delta_0,\delta_0)}/N_{(\delta_0,\delta_0)}$. Thence $LN_{(\delta_0,\delta_0)}/N_{(\delta_0,\delta_0)}$ contains the center of $G_{(\delta_0,\delta_0)}/N_{(\delta_0,\delta_0)}$, that is, $(\tau F,F\tau)N_{(\delta_0,\delta_0)}\in LN_{(\delta_0,\delta_0)}/N_{(\delta_0,\delta_0)}$. Hence we may assume that $l=(\tau F,F\tau)$. Now, $L$ contains the element
\begin{eqnarray*}
l^{(x,1)}l^{-1}&=&(x^{-1},1)(\tau F,F\tau)(x,1)(F\tau,\tau F)=(x^{-1}\tau FxF\tau,1)\\
&=&(\tau x^{-1}FxF\tau,1)=(\tau Fx^2F\tau,1)=(\tau x^{-2}\tau,1)=(x^{-2},1).
\end{eqnarray*}
Therefore $L$ contains $(x,1)$. A similar computation shows that $L$ contains $(1,x)$, $(y,1)$ and $(1,y)$. Also  $L$ contains $N_{(\delta_0,\delta_0)}$. In particular, $L$ is transitive on $\Gamma((\delta_0,\delta_0))$. 
}
\end{example}

\begin{example}\label{ex:4}{\rm 
This remarkable example is described in detail in~\cite[Example~$16$]{WEISS} and we recall here some significant properties related to the work in this paper. We refer to~\cite{WEISS} for the proofs of our claims.

Let $H=\Sym(10)$, $x=(1,2,3)(4,5,6)(7,8,9)$, $y=(1,4,7)(2,5,8)(3,6,9)$, $z=(2,3)(5,6)(8,9)$, $t=(4,7)(5,8)(6,9)$ and $\iota=(1,10)$. Write $K=\langle x,y,z,t\rangle$. Clearly, $K=\langle x,z\rangle\times \langle y,t\rangle\cong \Sym(3)^2$. Let $\Delta$ be the $H$-set $H/K$ and $\Lambda$ be the orbital graph $(K,K\iota)^H$. The graph $\Lambda$ is  connected, $H$-arc-transitive, vertex-quasiprimitive  and the local action  is the natural product action of $\Sym(3)\times \Sym(3)$ of degree $9$, which is not quasiprimitive.

Let $W$ be the wreath product $H\wr \Sym(2)=(H\times H)\rtimes\langle\pi\rangle$ 
where $\pi^2=1$ and $(h_1,h_2)^\pi=(h_2,h_1)$ for $h_1,h_2\in H$. Let $T$ be the
socle of $H$ and $N=T^2$ the socle of $W$. 
Consider $G=N\rtimes\langle \pi,(\iota,\iota)\rangle$ and the subgroup $L=\langle (x,y),(y,x),(z,t),(t,z),\pi\rangle$ of $G$. The projection of $N_G(T\times 1)=N\langle (\iota,\iota)\rangle$ onto the first coordinate of $H^2$ is the whole of $H$; furthermore, $|L|=72$ and $L$ is isomorphic to $\Sym(3)\wr \Sym(2)$. 
Let $\Omega$ be the $G$-set $G/L$. The group $G$ is quasiprimitive of type PA  with socle $N$ in its action on $\Omega$.

Denote by $\alpha$ the element $L$ of $\Omega$, by $\beta$ the element $L(tz\iota,\iota)$ of $\Omega$ and by $\Gamma$ the $G$-orbital graph $(\alpha,\beta)^G$. The graph $\Gamma$ is  connected, $G$-arc-transitive, vertex-quasiprimitive  and the $G$-local action  is the natural primitive action of $\Sym(3)\wr\Sym(2)$ of degree $9$.  Finally, since the projection of $L\cap (H\times H)$ on the first coordinate is exactly the group $K$, the graph uniquely determined by $(\Gamma,G)$ in Theorem~\ref{thm:mainqp} is $\Lambda$. Therefore in this example we have $(\Gamma,G)$ locally primitive with $(\Lambda,T)$ arc-transitive, but not even locally quasiprimitive.
}
\end{example}

\thebibliography{13}
\bibitem{BM}\'{A}. Bereczky, A. Mar\'oti, On groups with every normal
  subgroup transitive or semiregular, \emph{J. Algebra} \textbf{319}  no. 4
  (2008), 1733--1751.

\bibitem{CPSS}P. J. Cameron, C. E. Praeger, J. Saxl, G. M. Seitz, On
  the Sims conjecture and distance transitive graphs,
  \textit{Bull. Lond. Math. Soc. } \textbf{15} (1983), 499--506.
 
\bibitem{CLP} M.~D.~Conder, C.~H.~Li, C.~E.~Praeger, On
  the Weiss conjecture for 
finite locally primitive graphs, \textit{Proc. Edinburgh Math. Soc.}
\textbf{43} (2000), 129-138.

\bibitem{ATLAS} J.H. Conway, R.T. Curtis, S.P. Norton, R.A.
Parker, R.A. Wilson, \emph{Atlas of finite groups}, Clarendon Press,
Oxford, 1985.

\bibitem{Gard} A.~Gardiner, Arc-Transitivity in Graphs, \textit{Quart. J. Math. Oxford} \textbf{24} (1973), 399-407.

\bibitem{KS}K. A. Kearnes, \'A. Szendrei, Collapsing permutation groups,
  \textit{Algebra Universalis} \textbf{45} (2001), 35--51.

\bibitem{LiK}P. Kleidmain, M. Liebeck, \emph{The Subgroup Structure of
the Finite Classical Groups}, London Math. Soc. Lecture Note Series
  \textbf{129}, Cambridge University Press 1990.

\bibitem{Kov}L.~G.~Kov\'acs, Primitive subgroups of wreath products in product action, \textit{Proc. London Math. Soc.} \textbf{58} (1989), 306--322. 

\bibitem{PSV}P. Poto\v{c}nik, P. Spiga, G. Verret, On graph-restrictive permutation groups, submitted.

\bibitem{ImpGr}C.~E.~Praeger, Imprimitive symmetric graphs, \emph{Ars Combinatoria }\textbf{19A} (1985), 149--163.

\bibitem{P1}C. E. Praeger, An O'Nan-Scott Theorem for finite
  quasiprimitive permutation groups and an application to $2$-arc
  transitive graphs, \emph{J. Lond. Math. Soc. }(2) \textbf{47}
  (1993), 227--239.

\bibitem{Montreal}C.~E.~Praeger, C.~H.~Li, A.~Niemeyer, Finite transitive permutation groups and finite vertex-transitive graphs. Graph symmetry (Montreal, PQ, 1996), 277--318.

\bibitem{P2}C. E. Praeger, Finite quasiprimitive graphs, in
  \emph{Surveys in combinatorics}, London Mathematical Society Lecture
  Note Series, vol. 24 (1997), 65--85.

\bibitem{PConj} C. E. Praeger, Finite quasiprimitive group actions on
  graphs and 
  designs, in Groups - Korea '98, Eds: Young Gheel Baik, David
  L. Johnson, and Ann Chi Kim, de Gruyter, Berlin and New York, (2000),
  pp. 319-331.

\bibitem{P1b}C. E. Praeger, Finite transitive permutation groups and bipartite vertex-transitive graphs, \emph{Illinois Journal of Mathematics} \textbf{47} (2003), 461--475.

\bibitem{WEISS}
C.~E.~Praeger, L. Pyber, P. Spiga and E. Szab\'o, The Weiss conjecture for locally primitive graphs with automorphism groups admitting composition factors of bounded rank. Preprint.

\bibitem{PrSn}C.~E.~Praeger and C.~Schneider, Permutation groups and cartesian decompositions, in preparation.

\bibitem{Scott}L.~L.~Scott, Representations in characteristic p, In The Santa Cruz Conference on Finite Groups, Amer. Math. Soc. , Providence, R.I., (1980), 319--331.

\bibitem{Sims}C. C. Sims, Graphs and finite permutation groups,
  \textit{Math. Z. } \textbf{95} (1967), 76--86.

\bibitem{Tr1}V. I. Trofimov, Stabilizers of the vertices of graphs with projective suborbits, \emph{Soviet Math. Dokl. } \textbf{42} (1991), 825--828.

\bibitem{Tr2}V. I. Trofimov, Graphs with projective suborbits (in Russian), \emph{Inv. Akad. Nauk SSSR Ser. Mat. } \textbf{55} (1991), 890--916.

\bibitem{Tr3}V. I. Trofimov, Graphs with projective suborbits, cases of small characteristic, I (in Russian), \emph{Inv. Akad. Nauk SSSR Ser. Mat. } \textbf{58} (1994), 353--398.

\bibitem{Tr4}V. I. Trofimov, Graphs with projective suborbits, cases of small characteristic, II (in Russian), \emph{Inv. Akad. Nauk SSSR Ser. Mat. } \textbf{58} (1994), 559--576.

\bibitem{Weiss} R.~Weiss, $s$-transitive graphs,
  \textit{Colloq. Math. Soc. J\'{a}nos Bolyai} \textbf{25} (1978),
  827-847. 
  
\bibitem{weissp} R.~Weiss, An application of $p$-factorization methods to symmetric graphs, {\em Math.\ Proc.\ Comb.\ Phil.\ Soc.} {\bf 85} (1979), 43--48.

\bibitem{weissu} R.~Weiss, Permutation groups with projective unitary subconstituents, {\em Proc.\ Ameri.\ Math.\ Soc.} {\bf 78} (1980), 157--161.

\bibitem{W1}H.~Wielandt, Permutation groups through invariant relations and invariant functions, Ohio State University, Columbus, Ohio, 1969. Reprinted in: Wielandt, Helmut,
\emph{Mathematische Werke/Mathematical works. Vol. 1.
Group theory.}  Edited by Bertram Huppert and Hans Schneider. de Gruyter, Berlin, 1994. pp. 237--296.

\end{document}